\documentclass[12pt,reqno]{amsart}

\usepackage{graphicx}%
\usepackage{multirow}%
\usepackage{amsmath,amssymb,amsfonts}%
\usepackage{amsthm}%
\usepackage{mathrsfs}%
\usepackage{hyperref}
\usepackage[english]{babel}
\usepackage{enumitem}
\usepackage{float}
\usepackage{lipsum}
\usepackage[foot]{amsaddr}
\usepackage{csquotes}

\usepackage{tikz}

\usepackage{comment}
\usepackage{bbm}

\usepackage[doi=false,isbn=false,url=false,eprint=false, giveninits=true, maxbibnames=99]{biblatex}

\newcommand{\pw}{\mathbb{P}(\omega)}

\newtheorem{theorem}{Theorem}
\numberwithin{theorem}{section}

\newtheorem{proposition}[theorem]{Proposition}% 
\newtheorem{definition}[theorem]{Definition}%
\newtheorem{remark}[theorem]{Remark}%

\newtheorem{lemma}[theorem]{Lemma}
\newtheorem{corollary}[theorem]{Corollary}

\usepackage[dvipsnames]{xcolor}

\usepackage{enumitem}
\setenumerate[1]{label={(\arabic*)}}

\DeclareUnicodeCharacter{0306}{{\u i}}

\title[]{The transition between synchronization and chaos for random Blaschke products}

\author{Cecilia González-Tokman}
\email{cecilia.gt@uq.edu.au}
\address{School of Mathematics and Physics, The University of Queensland, Brisbane, QLD 4072, Australia}
\author{Renee Oldfield}
\email{renee.oldfield@uq.edu.au}

\begin{document}
\begin{abstract}
In this paper, we establish sufficient conditions for which a class of random circle maps, the so-called random Blaschke products, exhibits either synchronization or chaotic dynamics. We also establish necessary and sufficient conditions for such systems to have a unique random fixed point attractor in the unit disc. As an example, for a class of two map cocycles,
we identify the random physical measures as a parameter is varied and investigate the critical value where the dynamics transitions from order to chaos.
\end{abstract}

\maketitle

\section{Introduction}

Understanding routes to chaos is a fundamental question in dynamical systems. In the deterministic setting, chaotic dynamics has been extensively studied, and different routes between order and chaos in discrete time systems have been identified. Ruelle and Takens show that, as a parameter is varied, a physical system can transition from a stable fixed point to periodic motion, then to quasiperiodic motion, and finally to the emergence of a strange attractor \cite{ruelle_takens}. Another route is period-doubling, where varying a parameter sees a stable period-2 orbit loses stability and orbits of increasing period ($2,4,8,...$) appear, and further variation sees chaotic dynamics emerge \cite{feigenbaum}. Another is intermittency, introduced by Pomeau and Manneville, where long periods of quasiperiodic behavior are followed by irregular bursts increasing in frequency and duration as a parameter is varied \cite{intermittency}. However, this transition is much less understood in the random setting, where the evolution of the system is more complex, as it incorporates aspects and seasonality of the environment. In this paper, we describe the transition between order and chaos for random Blaschke products and give sufficient conditions for chaotic dynamics and synchronization on the unit circle.

Dynamics generated by holomorphic self-maps of the disc and other simply connected, open subsets of the complex plane have been extensively studied due to their deep connections with complex analysis and geometric function theory. Blaschke products are a family of holomorphic functions of the unit disk, preserving the unit circle, which provide a rich class of examples of discrete time dynamical systems. Some of the most popular and well-studied one-dimensional maps belong to this class, including rotations, the doubling map and some M\"obius transformations. Furthermore, this family contains a variety of maps with rich nonlinear dynamics, ranging from expanding maps with absolutely continuous invariant measures, to maps with a sink. Hence, Blaschke products provide a natural setting to investigate transitions between order and chaos.

In this paper, we study the asymptotic behavior and random physical invariant measures for a family of admissible Blaschke product cocycles (see Definition \ref{admissible}) on $\mathbb{T}.$ We denote by $(\mathcal{T}, \sigma,\mathbb{P})$ the cocycle on $\hat{\mathbb{C}}$ generated by $\mathcal{T}=(T_{\omega})_{\omega\in\Omega}$ with $\mathbb{P}$-preserving driving $\sigma.$ Sometimes, we refer to this cocycle simply as $\mathcal{T},$ when $\sigma$ and $\mathbb{P}$ are clear from the context. Let $\mathcal{T}^{*}:=\mathcal{T}|_{\mathbb{T}}.$ We show that the cocycle $\mathcal{T}^{*}$ exhibits chaotic behavior when $\mathcal{T}$ is \textit{eventually expanding on average}, and give sufficient conditions for synchronization of trajectories in the absence of such on-average expansion. Our main theorem is as follows. \begin{theorem}\label{possible_limits}
Let $(\mathcal{T},\sigma,\mathbb{P})$ be an admissible Blaschke product cocycle. Then, exactly one of the following two cases hold:
\begin{enumerate}
    \item $\mathcal{T}$ is eventually expanding on average. In this case, $\mathcal{T}^{*}$ has a unique absolutely continuous random invariant measure $\mu:=\{\mu_{\omega}\}_{\omega\in\Omega}$ with marginal $\mathbb{P},$ such that for $\mathbb{P}$-a.e. $\omega\in\Omega,$ $\frac{d\mu_{\omega}}{d\text{Leb}}=P_{x_{\omega}},$ where $x_{\omega}\in D$ is the (unique) accumulation point of $T_{\sigma^{-n}\omega}^{(n)}(z)$,  $\forall z\in D,$ and $P_{x_{\omega}}$ is the Poisson kernel associated to $x_{\omega}.$  Furthermore, $\mu$ is the unique random physical measure for $\mathcal{T}^{*}$ and its basin has full Lebesgue measure. 
    \item For $\mathbb{P}$-a.e. $\omega\in\Omega,$ $T_{\sigma^{-n}\omega}^{(n)}(z), z\in D$ has an accumulation point on $\mathbb{T}.$ Moreover, if $\int_{\Omega}\log\left(1-|T_{\omega}(0)|\right)\,d\mathbb{P}(\omega)>-\infty$ and $$\lim_{n\to\infty}\frac{1}{n}\log\left(1-|T_{\sigma^{-n}\omega}^{(n)}(0)|\right)\neq0$$ for a positive measure subset of $\Omega,$ then there exists measurable $x:\Omega\to\mathbb{T}$ such that, for $\mathbb{P}$-a.e. $\omega,$ all $z\in D$ and $\text{Leb}$-a.e. $z\in\mathbb{T},$ taking $x_{\omega}:=x(\omega),$  $\lim_{n\to\infty}T_{\sigma^{-n}\omega}^{(n)}(z)=x_{\omega}.$ The measure $\mu:=\{\mu_\omega\}_{\omega\in\Omega}$ with marginal $\mathbb{P}$ such that $\mu_{\omega}=\delta_{x_{\omega}}$ is the unique random physical measure for $\mathcal{T}^{*}$ and its basin has full Lebesgue measure. \label{case2}

\end{enumerate}
\end{theorem}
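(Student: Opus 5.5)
The plan is to study the backward (pullback) compositions $T_{\sigma^{-n}\omega}^{(n)}=T_{\sigma^{-1}\omega}\circ\cdots\circ T_{\sigma^{-n}\omega}$ as holomorphic self-maps of the disc $D$. The basic tool is the Schwarz--Pick lemma: each $T_\omega$ is a weak contraction of the hyperbolic metric $\rho_D$. Hence for any $z,w\in D$ the distance $\rho_D(T_{\sigma^{-n}\omega}^{(n)}(z),T_{\sigma^{-n}\omega}^{(n)}(w))\le\rho_D(z,w)$ is bounded. So all orbits $\{T_{\sigma^{-n}\omega}^{(n)}(z)\}_n$ have the same accumulation behaviour relative to $\mathbb{T}$: either they all stay in a compact subset of $D$ along a subsequence, or they all tend to $\mathbb{T}$ together. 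I would define ``eventually expanding on average'' through the first alternative: the pullback orbit of $0$ does not escape to $\mathbb{T}$, or equivalently some iterate has $\int\log|T^{(N)\prime}|$ on the invariant Poisson measure positive. This dichotomy is what gives ``exactly one of the two cases''.

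In case (1) I would show that the pullback orbit actually converges to a single point $x_\omega\in D$, with $x$ measurable and $T_\omega(x_\omega)=x_{\sigma\omega}$. For this I use strict contraction. Admissibility should guarantee that on a set of positive measure $T_\omega$ is not a disc automorphism, so Schwarz--Pick is strict, uniformly on compacts. Combining Poincaré recurrence to a compact set in $D$ with this uniform contraction gives a Cauchy sequence in $\rho_D$, hence a unique accumulation point. This step is the random analogue of Denjoy--Wolff and is where I expect the main work.

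Next comes the measure. A Blaschke product $T$ with $T(a)=b$ pushes the harmonic measure $P_a\,d\mathrm{Leb}$ forward to $P_b\,d\mathrm{Leb}$, by the classical boundary-value/harmonic measure identity. Hence $\mu_\omega=P_{x_\omega}\,d\mathrm{Leb}$ is an equivariant family. For uniqueness among absolutely continuous measures and for physicality, I would push a Lebesgue density forward. The pushforward of $P_z$ under $T_{\sigma^{-n}\omega}^{(n)}$ is $P_{T^{(n)}(z)}$, which converges to $P_{x_\omega}$ in $L^1$. Any absolutely continuous density is approximated in $L^1$ by convex combinations of Poisson kernels. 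Physicality then follows from ergodicity of the skew product with respect to $\mu$, obtained from this uniqueness, together with Birkhoff's theorem and the equivalence of $\mu_\omega$ and Lebesgue.

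In case (2), accumulation on $\mathbb{T}$ is immediate from the dichotomy. Under the integrability hypothesis, Kingman's subadditive theorem gives a limit $\lambda(\omega)=\lim\frac1n\log(1-|T^{(n)}_{\sigma^{-n}\omega}(0)|)$; subadditivity comes from $1-|f(g(0))|\ge c(1-|f(0)|)(1-|g(0)|)$ via Schwarz--Pick. The limit is invariant, so by ergodicity it is a constant $-\lambda<0$, and the orbit approaches $\mathbb{T}$ exponentially fast. Successive points are at bounded hyperbolic distance $\rho_D(T^{(n)}(0),T^{(n+1)}(0))\le\rho_D(0,T_{\sigma^{-n-1}\omega}(0))$, which grows subexponentially by integrability and Borel--Cantelli. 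Bounded hyperbolic steps near the boundary translate into Euclidean steps of size $e^{-\lambda n+o(n)}$, which are summable. So the orbit converges to some $x_\omega\in\mathbb{T}$, measurably, and the bounded-distance argument transfers this to every $z\in D$. For Lebesgue-a.e. $z\in\mathbb{T}$ I would use Poisson kernels again: $(T^{(n)})_*P_0\,d\mathrm{Leb}=P_{T^{(n)}(0)}\,d\mathrm{Leb}\to\delta_{x_\omega}$ weakly. Exponential convergence should give concentration at rate $e^{-\lambda n}$; pass to Borel--Cantelli to get pointwise pullback convergence a.e. on $\mathbb{T}$. The same estimate gives the full-Lebesgue basin for $\delta_{x_\omega}$ and uniqueness of the random physical measure. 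I regard this last passage, from measures to almost every point under pullback, as the delicate step, and would try to handle it via the Julia--Carathéodory type estimate $|T(z)-x|\lesssim$ hyperbolic displacement.
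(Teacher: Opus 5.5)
The main gap is in your first paragraph: you cannot \emph{define} ``eventually expanding on average'' as ``the pullback orbit of $0$ does not escape to $\mathbb{T}$''. In the statement it is the derivative condition of Definition \ref{admissible}, $\lim_{n\to\infty}\frac1n\int_\Omega\log\inf_{z\in\mathbb{T}}|(T_\omega^{(n)})'(z)|\,d\mathbb{P}(\omega)>0$. The real content of ``exactly one of the two cases'' is that this condition holds if and only if the pullback has an interior random attracting fixed point. With your redefinition, case (1) and the first assertion of case (2) become tautological. The ``equivalent'' formulation you mention (positivity of $\int\log|(T^{(N)})'|$ against the Poisson measure) is also not the $\inf$-based condition, and relating the two is exactly the work to be done.

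The paper proves both directions:
\begin{itemize}
\item Eventually EOA implies $\mathcal{T}^{(N)}$ is admissible EOA (Lemma \ref{N_EOA_admissible}, Appendix \ref{N_admissible}). Theorem \ref{buzzi_blaschke} then yields the interior fixed point and the Poisson acim (Lemma \ref{N_admissible_rfp}).
\item Conversely, $\mathbb{P}(\Omega_{\mathbb{T}})=0$ gives an interior fixed point (Proposition \ref{no_circle_pt}). This in turn forces eventual EOA (Proposition \ref{rfp_oi_eoa}): conjugate by $\phi_{c_\omega}$ so every map fixes $0$, control the zeros on the positive-measure set where $|c_\omega|\le 1-\varepsilon$ and the zeros of $T_\omega$ lie in $D_{1-\delta}$, and apply Tischler's Theorem \ref{tischler} to get $\inf_{\mathbb{T}}|\tilde T_\omega'|\ge\log r/\log R>1$ there.
\end{itemize}
Without this last implication, nothing in your plan shows that a cocycle failing the derivative condition must accumulate on $\mathbb{T}$, which is what case (2) asserts.

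Even granting your dichotomy, case (1) has a hole. ``Stays in a compact set along a subsequence'' does not exclude pullback orbits that also accumulate on $\mathbb{T}$. For such orbits, recurrence plus Schwarz--Pick does not give a Cauchy sequence. In a pullback the new map is inserted innermost, so the only a priori bound is $d_D(T^{(n+m)}_{\sigma^{-n-m}\omega}(0),T^{(n)}_{\sigma^{-n}\omega}(0))\le d_D(T^{(m)}_{\sigma^{-n-m}\omega}(0),0)$, which is then unbounded in $m$. The paper's hyperbolic-diameter argument on the limit set $L(\omega)$, using $L(\sigma\omega)=T_\omega(L(\omega))$, needs the whole orbit to lie in a fixed $\overline{D}_{r_\omega}$.

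Your physicality step has a similar mismatch. Birkhoff for the skew product controls forward orbits $T^{(k)}_\omega(z)$, whose averages tend to $\int_\Omega\mu_{\omega'}\,d\mathbb{P}(\omega')$. It says nothing about the pullback averages in Definition \ref{physical_measure_defn}.

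By contrast, your treatment of the ``moreover'' part of case (2) is correct and more elementary than the paper's.
\begin{itemize}
\item Kingman gives $1-|T^{(n)}_{\sigma^{-n}\omega}(0)|=e^{-\lambda n+o(n)}$ with $\lambda>0$.
\item Integrability of $d_D(T_\omega(0),0)$ makes the hyperbolic steps $o(n)$, so the Euclidean steps are summable. This replaces Gou\"ezel--Karlsson (Theorem \ref{contract_disc_conv}).
\item For large $n$, the Poisson bound $\text{Leb}\{z\in\mathbb{T}:|T^{(n)}_{\sigma^{-n}\omega}(z)-x_\omega|\ge\varepsilon\}\le 8\varepsilon^{-2}(1-|T^{(n)}_{\sigma^{-n}\omega}(0)|)$ combined with Borel--Cantelli gives Lebesgue-a.e.\ convergence on $\mathbb{T}$. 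This replaces Theorem \ref{holomorphic_sequence}.
\end{itemize}
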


We illustrate our results for a class of two map cocycles (Theorem \ref{theorem_example}). We describe the asymptotic behavior of orbits starting in the disc and on the circle, and their random invariant (physical) measure, as the probability of choosing each map varies. In addition, we show that at the critical parameter value where the dynamics on the circle transitions from chaos to synchronization, iid random compositions exhibit a phenomenon not present in the deterministic case. For a single map, trajectories inside the disc entering an arbitrarily small neighborhood of the boundary fixed point necessarily forces convergence to that point. In contrast, in an iid setting, we prove that for almost every sequence of maps and every sufficiently small neighborhood (in $D$) of the common fixed point on the circle, forward trajectories in the unit disc escape and return to an arbitrarily small neighborhood of the fixed point intermittently.  We also show there is a transition between positive and zero measure theoretic entropy for this family at this critical probability value.

 Understanding how processes transition from chaos to synchronization is a key problem in the study of random dynamical systems. In \cite{synchronization_intermittency}, Abbasi, Gharaei and Homburg give sufficient conditions for synchronization in iterated function systems generated by finitely many logistic maps and establish intermittent dynamics in the presence of two particular logistic maps. Homburg and Kalle analyze the two-point motion of iterated function systems
on the unit interval generated by expanding and contracting
aﬃne maps, and show intermittent dynamics when the Lyapunov exponent is zero, chaotic dynamics when it is positive, and synchronization when it is negative \cite{homburg_kalle_intermittency}. For a family of circle endomorphisms with additive noise, Goverse, Homburg, and Lamb classify the two-point dynamics and the random invariant measure, depending on the sign of the Lyapunov exponent, and establish intermittency in the two-point dynamics for the zero Lyapunov exponent case \cite{random_circle_endo}. Yan, Majumdar, Ruffo, Sato, Beck, and Klages show the transition between chaotic dynamics and regular dynamics for the iid composition of a contracting map and expanding map on the interval \cite{simple_random_map}.

Random dynamical systems on one‑dimensional spaces, such as those generated by Möbius transformations, matrix cocycles, and circle homeomorphisms, have deep connections to many areas, including continued fractions and random matrix theory. Furstenberg establishes the existence of a unique stationary measure, referred to as the Furstenberg measure, for the iid product of $SL_2(\mathbb{R})$ matrices with no finite invariant set \cite{furstenberg}. 

Brin and Kifer consider the Markov chain on a compact manifold generated by a sequence of random diffeomorphisms and construct non-random stable foliations for these systems \cite{kifer_random_diffeo}. In \cite{homburg_random_diffeo}, Homburg and Zmarrou study random circle diffeomorphisms with identically distributed noise. They provide sufficient conditions for the existence of an attracting random fixed point or attracting random periodic orbit and investigate the bifurcation between them. Homburg gives conditions for synchronization for random diffeomorphisms on arbitrary compact manifolds, extending results for diffeomorphisms on the circle \cite{homburg_synchronization}. In \cite{malicet}, Malicet shows that iid composition of homeomorphisms of the circle almost surely contract small intervals, under the assumption that there is no probability measure left invariant by almost every map. Gelfert and Salcedo prove several statistical properties for random dynamical systems on a compact metric space, including a central limit theorem, the strong law of large numbers,
and the law of the iterated logarithm. In addition, they establish synchronization rates and large deviations for an iterated function system of circle diffeomorphisms \cite{gelfert_synchronization}. 

One particular type of random diffeomorphism, random Möbius transformations, have been extensively studied. In \cite{Mobius_plane}, Ambroladze and Wallin provide sufficient conditions for the almost sure convergence to $\overline{\mathbb{R}}$ of iid sequences of Möbius transformations mapping the upper half plane to itself. Karmakar and Key give sufficient conditions for sequences of several different families of Möbius transformations to converge to a point or to converge in distribution \cite{iid_Mobius}. In \cite{Mobius_composition}, Jacques and Short provide necessary and sufficient conditions for convergence of sequences of Möbius transformations mapping the unit disc strictly inside itself and calculate the Hausdorﬀ dimension of the set of sequences of so-called
limit-disc type in some cases. In \cite{hochman_furstenberg}, Hochman and Solomyak give results on the dimension
of the Furstenberg measure for iid products of $SL_{2}(\mathbb{R})$ matrices.

Blaschke products, comprised of products of Möbius transformations mapping the unit disc to itself, are a natural generalization. The dynamics of Blaschke products have been extensively studied in the deterministic setting. For Blaschke products of degree at least two, the orbits for all points in the unit disc $D,$ $\hat{\mathbb{C}}\backslash\overline{D}$ tend to the indifferent or attracting fixed point in $\overline{D}, \hat{\mathbb{C}}\backslash D,$ respectively. The more interesting dynamics occur on the unit circle, where the dynamics fall into one of four cases: elliptic, doubly parabolic, singly parabolic, and hyperbolic. In each of these cases, the Julia and Fatou sets of the map  can be explicitly described \cite{baker_domains2, baker_domains,  parabolic, fletcher}, and an invariant measure can be constructed \cite{ergodic_inner_function, martin, expandPujals}, depending on the attracting or indifferent fixed point in $\overline{D}$. Doering and Mañé show that for an inner function $T$ the Dirac measure supported on an attracting or indifferent fixed point on $\mathbb{T}$ is the unique physical measure, even when $T$ is recurrent \cite{Mane1991}.

Much work has gone into a similar classification for non-autonomous iteration of maps of the unit disc or other domains. We refer to \cite{beardon, keen_lakic, lorentzen, lorentzen2}, and the references therein, for an overview of classical results in the theory of iterated function systems consisting of contractions of a simply connected (open) domain of $\mathbb{C}.$ In \cite{benini_1}, Benini, Evdoridou, Fagella, Rippon and Stallard give sufficient conditions for convergence to the boundary of sequences of non-autonomous sequences of holomorphic maps between simply connected domains and sufficient conditions for the so-called Denjoy-Wolff set, a generalization of the Denjoy-Wolff point, to have either full or zero measure. In \cite{benini2}, the same authors generalize the Poincaré recurrence theorem and sharpen some of their results from \cite{benini_1} for the forward composition of inner functions.

Another topic of interest is whether a random dynamical system has a (unique) physical invariant measure(s), which describes the long-term distribution of the system,  and what form these take. Buzzi establishes the existence of absolutely continuous SRB measures for random expanding on average Lasota-Yorke maps under certain conditions in \cite{buzzi_SRB} and the uniqueness of such a measure under a covering condition in \cite{buzzi_edoc}. Atnip, Froyland, González-Tokman and Vaienti develop a quenched thermodynamic formalism for random dynamical systems that satisfy a random covering condition in \cite{AFGV2}. For a family of diffeomorphisms with random noise, Araújo shows the existence of the time average for almost every orbit and that these averages coincide with a finite number of absolutely continuous physical measures \cite{araujo_time_average}. Barrientos, Nakamura, Nakano and Toyokawa give conditions for the existence and finitude of absolutely continuous ergodic stationary probability measures for the iid composition of measurable maps on a Polish space \cite{finititude_physical_measure}. González-Tokman and Quas establish the existence of unique absolutely continuous invariant measures for a class of uniformly expanding random Blaschke products and describe the Lyapunov spectrum of the associated cocycle of Perron-Frobenius
operators acting on Banach spaces of analytic functions on an annulus \cite{CGTAQ}. In \cite{preprint}, we describe the absolutely continuous random invariant measure and associated measure theoretic entropy for a class of admissible expanding on average random Blaschke products.

 In this work, we show that an admissible Blaschke product cocycle has a so-called random fixed point in the disc, and absolutely continuous random invariant (physical) measure, if it is eventually expanding on average. We also give sufficient conditions in the not eventually expanding on average setting, based on the work of Gouzël and Karlsson \cite{subadditive_MET} and Benini, Evdoridou, Fagella, Rippon and Stallard \cite{benini_1}, for the cocycle to have a random fixed point on the unit circle and a physical measure which is Dirac on the fibers. 

 In Section \ref{main_section}, we prove Theorem \ref{possible_limits}. We also prove several results relating to boundary convergence, that is, \ref{possible_limits}$(2),$ and show that eventually expanding on average $\mathcal{T}$ has positive measure theoretic entropy. In Section \ref{example_section}, we investigate the disc and boundary dynamics of a 2-map cocycle as a parameter is varied. We explicitly describe the asymptotic dynamics inside the disc and on the circle for non-critical parameter values, and investigate the dynamics at the critical value where the dynamics transitions from chaos to order.

\section*{Acknowledgements}

The authors thank Dmitry Dolgopyat, Enrique Pujals and Anthony Quas for insightful discussions. They also thank MATRIX for hosting them during the workshop “Statistical Properties and Extremes in Dynamical Systems Theory and Numerics" in January 2026, where part of this work was completed. CGT and RO acknowledge support from the Australian Research Council. RO acknowledges the Australian Government Research Training Program
for financial support. CGT acknowledges Lorenzo Díaz and PUC Rio for their hospitality during a visit in 2023, where this project started.

\section{Asymptotic behavior for admissible Blaschke product cocycles}\label{main_section}

\subsection{Notation and preliminaries}

In this section, we give relevant definitions and results that will be used through this work.

\begin{definition}[Blaschke product cocycle] 
Let $\sigma$ be an invertible, ergodic, measure-preserving transformation of a probability space $(\Omega, \mathcal{F}, \mathbb{P}).$ Let $\mathcal{T}=(T_{\omega})_{\omega\in\Omega},$ where
     for each $\omega\in\Omega,$ we let   $T_{\omega}:\hat{\mathbb{C}}\to\hat{\mathbb{C}}$ be given by $$T_{\omega}(z)=\rho_{\omega}\prod_{i=1}^{d_{\omega}}\frac{z-a_{i,\omega}}{1-\bar{a}_{i,\omega}z},$$ where  $d:\Omega\to\mathbb{N},\rho:\Omega\to\mathbb{T}$ are measurable and for each $j\ge1$ $\Omega_{j}:=\{\omega\in\Omega:d_{\omega}=j\}$ is measurable. Furthermore, when $\Omega_j$ is non-empty, $a:\Omega_{j}\to D^{j}, D^{j}:=\underbrace{D\times\cdots\times D}_{j \text{ times}}$ is measurable. The Blaschke product cocycle generated by $(\mathcal{T}, \sigma, \mathbb{P}),$ or simply $\mathcal{T},$ is defined by $T_{\omega}^{(0)}=\text{Id}$ and $T_{\omega}^{(n)}=T_{\sigma^{n-1}\omega}\circ\cdots\circ T_{\omega}$ for all $n\in\mathbb{N}.$
\end{definition}

Throughout this work, we consider a class of Blaschke product cocycles that we call admissible. $\text{Leb}$ will denote $\text{Leb}|_{\mathbb{T}},$ the Lebesgue measure restricted to the unit circle.
\begin{definition}\label{admissible}
    We will call a Blaschke product cocycle $\mathcal{T}$ \textit{admissible} if \begin{enumerate}
    \item $\mathbb{P}(\omega\in\Omega:\deg(T_{\omega})\ge2)>0;$
    \item $\int_{\Omega}\log\frac{\deg(T_{\omega})}{\inf_{z\in\mathbb{T}}|T_{\omega}'(z)|}\,d\mathbb{P}(\omega)<\infty;$
    \item $\int_{\Omega}\log^{+}\left(\int_{\mathbb{T}}\left|\frac{T_{\omega}''}{(T_{\omega}')^2}\right|\,d\text{Leb}\right)\,d\mathbb{P}(\omega)<\infty;$ and
    \item $\int_{\Omega}\log\inf_{z\in\mathbb{T}}|T_{\omega}'(z)|\,d\mathbb{P}(\omega)>-\infty.$
    \end{enumerate} If, in addition, $$\int_{\Omega}\log\inf_{z\in\mathbb{T}}|T_{\omega}'(z)|\,d\mathbb{P}(\omega)>0,$$ then we call $\mathcal{T}$ admissible expanding on average (EOA). If $$\lim_{n\to\infty}\int_{\Omega}\frac{1}{n}\log\inf_{z\in\mathbb{T}}|(T_{\omega}^{(n)})'(z)|\,d\mathbb{P}(\omega)>0,$$ then we call $\mathcal{T}$ eventually EOA.
\end{definition}

We will refer to a $\mathcal{T}$-invariant measure, or random invariant measure,  given by the following definition.
\begin{definition}[Random invariant measure]
A map $\mu:\mathcal{F}\times\Omega\to[0,1]$ given by $\mu(F,\omega)=\mu_{\omega}(F)$ satisfying \begin{itemize}
    \item for every $F\in\mathcal{F}, \omega\mapsto\mu_{\omega}(F)$ is measurable; and
    \item for $\mathbb{P}$-a.e. $\omega\in\Omega, F\mapsto\mu_{\omega}(F)$ is a Borel probability measure
\end{itemize} is a random (probability) measure. We say $\mu$ has disintegration $\{\mu_{\omega}\}_{\omega\in\Omega}$ with marginal $\mathbb{P}.$

 Let $(\mathcal{T}, \sigma, \mathbb{P})$ be a map cocycle. $\mu$ is a random invariant measure, or $\mathcal{T}$-invariant measure, if  $\mu_{\sigma\omega}=(T_{\omega})_{*}(\mu_{\omega})$ for $\mathbb{P}$-a.e. $\omega\in\Omega.$
\end{definition}
Throughout, we will often take either $X=\mathbb{T}$ or $X=D.$

We will use the following theorem in the proof of Theorem \ref{possible_limits}$(1)$ and Lemma \ref{N_admissible_rfp}.

\begin{theorem}[Theorem 1 \cite{preprint}]\label{buzzi_blaschke}

    Let $(\mathcal{T}, \sigma, \mathbb{P})$ be an admissible expanding on average Blaschke product cocycle.
    Then, there exists a unique random absolutely continuous invariant measure (acim) $\mu,$ with disintegration $\{\mu_{\omega}\}_{\omega\in\Omega}$ with marginal $\mathbb{P},$ such that \begin{enumerate}[label=(\roman*)]
        \item There exists a measurable map $x:\Omega\to D$ such that for $\mathbb{P}$-a.e. $\omega\in\Omega,$ $x_{\omega}=\lim_{n\to\infty}T_{\sigma^{-n}\omega}^{(n)}(z)$ for all $z\in D,$ and $\frac{d\mu_{\omega}}{d\text{Leb}}=P_{x_{\omega}},$ where $P_{x_{\omega}}  (z)=\frac{1-|x_{\omega}|^2}{|z-x_{\omega}|^2}, z\in\mathbb{T}.$ Then, $(T_{\omega})_{*}(\mu_{\omega})=\mu_{\sigma\omega}$ for $\mathbb{P}$-a.e. $\omega\in\Omega;$ and \label{main_thm_rfp} 
        \item For $\mathbb{P}$-a.e. $\omega\in\Omega$ and any $h_{*}\in\text{BV}(\mathbb{T})$ where $||h_{*}||_{1}=1,$ there exists $n_{0}(\omega)\in\mathbb{N}$ and $\rho<1$ such that for all $n\ge n_{0}(\omega),$ $\sup_{||f||_1=1}\left|\int_{\mathbb{T}}f\circ T_{\sigma^{-n}\omega}^{(n)}\cdot h_{*}\,d\text{Leb}-\int_{\mathbb{T}}f P_{x_{\omega}}\,d\text{Leb}\right|\le\rho^{n}.$ \label{main_thm_diff_bound}

    \end{enumerate}
\end{theorem}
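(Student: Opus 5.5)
The plan is to obtain the absolutely continuous random invariant measure from the general theory of random expanding-on-average maps. I would then identify its densities as Poisson kernels using the classical invariance of harmonic measure under inner functions.

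First, I will check that an admissible EOA Blaschke product cocycle restricted to $\mathbb{T}$ falls within Buzzi's framework of random Lasota--Yorke maps that expand on average.
\begin{itemize}
\item Each $T_\omega|_{\mathbb{T}}$ is a smooth covering of degree $d_\omega$, so it has $d_\omega$ monotone branches.
\item Condition (2) of Definition \ref{admissible} controls $\log d_\omega$ against the minimal expansion.
\item Condition (3) gives an integrable bound on the variation of $1/|T_\omega'|$, which is the distortion term in the Lasota--Yorke inequality.
\item Condition (4), together with $\int\log\inf|T_\omega'|\,d\mathbb{P}>0$, gives integrability and positivity of the expansion rate.
\end{itemize}
Running the random Lasota--Yorke inequality in $\text{BV}(\mathbb{T})$ along the pullback orbit $T^{(n)}_{\sigma^{-n}\omega}$ and using Birkhoff's theorem, the BV norm of $\mathcal{L}^{(n)}_{\sigma^{-n}\omega}h_*$ stays tempered. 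A random covering property (inherited from $\deg\ge 2$ on a positive-measure set and the expansion) then gives a random Hilbert-metric/cone contraction. This yields a unique acim $\mu$ with densities $h_\omega$, and exponential convergence of $\mathcal{L}^{(n)}_{\sigma^{-n}\omega}h_*\to h_\omega$ in $L^1$ at a rate $\rho^n$ for $n\ge n_0(\omega)$. By duality with $f\in L^\infty$ this is exactly statement \ref{main_thm_diff_bound}, with $P_{x_\omega}$ to be identified as $h_\omega$.

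Second, I identify $h_\omega$. The key classical fact is Löwner's lemma: for an inner function (in particular a finite Blaschke product) $T$ and $x\in D$, we have $T_*(P_x\,d\text{Leb})=P_{T(x)}\,d\text{Leb}$. This follows because harmonic measure from $x$ is pushed forward to harmonic measure from $T(x)$, since $u\circ T$ is harmonic with boundary values $u\circ T$. Applying this with $h_*=P_z$ for $z\in D$, which lies in BV, gives
\[
\mathcal{L}^{(n)}_{\sigma^{-n}\omega}P_z=P_{T^{(n)}_{\sigma^{-n}\omega}(z)}.
\]
By the first step these Poisson densities converge in $L^1$ to $h_\omega$.

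Now suppose $y_n:=T^{(n)}_{\sigma^{-n}\omega}(z)$ had a subsequence tending to $\mathbb{T}$. Then $P_{y_n}\,d\text{Leb}$ would converge weakly to a Dirac mass along that subsequence, contradicting $L^1$ convergence to a probability density. Hence $(y_n)$ stays in a compact subset of $D$. Since $x\mapsto P_x$ is injective and continuous into $L^1$, every limit point $x$ satisfies $P_x=h_\omega$, so $y_n$ converges to a unique $x_\omega$. Moreover $x_\omega$ does not depend on $z$, because the limit density $h_\omega$ does not. This gives \ref{main_thm_rfp}.

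Measurability of $x$ follows from its definition as a pointwise limit of measurable maps. Invariance $(T_\omega)_*\mu_\omega=\mu_{\sigma\omega}$ follows either from Löwner's lemma together with $T_\omega(x_\omega)=x_{\sigma\omega}$, or directly from the equivariance of the limit.

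The main obstacle is the first step. It requires verifying the random covering/cone-contraction condition and obtaining a uniform rate $\rho<1$ with only integrable (not bounded) distortion and degree. I would handle this by passing to an induced or blocked cocycle over a long time window on which the Birkhoff averages of $\log\inf|T'_\omega|$ and of the distortion terms are close to their means. On that window the Lasota--Yorke contraction is uniform, and tempered excursions are absorbed into $n_0(\omega)$.
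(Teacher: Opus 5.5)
Your proposal follows essentially the route behind this result. The paper does not prove it here; it imports it from the authors' earlier work, which, as the label and the paper's discussion of Buzzi indicate, runs the same way. Buzzi's random Lasota--Yorke theory supplies the unique acim and exponential convergence of pullback densities. The identity $\mathcal{L}_T P_x=P_{T(x)}$ (the transfer-operator form of Theorem \ref{poisson_kernel}), applied to $h_*=P_z$, then identifies the density as $P_{x_\omega}$ and forces $T^{(n)}_{\sigma^{-n}\omega}(z)$ to converge inside $D$, exactly as in your second step.

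You do not need to rebuild the cone and blocking machinery yourself. Conditions (2)--(4) of Definition \ref{admissible} are tailored to Buzzi's integrability hypotheses. The covering requirement is automatic here: under a local diffeomorphism of $\mathbb{T}$, an arc either covers $\mathbb{T}$ or has its length multiplied by at least $\inf|T_\omega'|$, and these products grow exponentially under EOA.

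One correction concerns (ii). There the supremum is over $\|f\|_1=1$, so (ii) asserts $\|\mathcal{L}^{(n)}_{\sigma^{-n}\omega}h_*-P_{x_\omega}\|_\infty\le\rho^n$. This is a sup-norm statement, and it does not follow from $L^1$ convergence by duality with $L^\infty$ test functions, as you claim. It does follow from Buzzi's correlation bound, which is stated with $\|f\|_{L^1}\|h\|_{\text{BV}}$. It also follows from your own cone argument, since the sup norm is adapted to the cone $\{h\ge 0:\mathrm{Var}(h)\le a\int h\}$. Hilbert-metric contraction therefore gives sup-norm convergence, not merely $L^1$ convergence.
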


 The following lemma will enable us to extend Theorem \ref{buzzi_blaschke} to an admissible eventually EOA cocycle. \begin{lemma}\label{N_EOA_admissible}
    If $\mathcal{T}$ is admissible and eventually EOA, that is, $$\lim_{n\to\infty}\int_{\Omega}\frac{1}{n}\log\inf_{z\in\mathbb{T}}|(T_{\omega}^{(n)})'(z)|d\mathbb{P}(\omega)>0,$$ then $\mathcal{T}^{(N)}$ is an admissible EOA cocycle for all $N\ge N_{0}$ for some $N_{0}\in\mathbb{N}.$
\end{lemma}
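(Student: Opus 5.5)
We have to check that the $N$-step cocycle $\mathcal{T}^{(N)}$, given by $\omega\mapsto T^{(N)}_\omega$ over the driving $\sigma^N$, satisfies conditions (1)--(4) of Definition \ref{admissible} together with the EOA inequality. One caveat is ergodicity: $\sigma^N$ need not be ergodic. We either restrict to an ergodic component of $\sigma^N$ (there are finitely many, permuted cyclically by $\sigma$, and each has the same integrals up to normalisation), or observe that the proof of Theorem \ref{buzzi_blaschke} only uses the integrability and average conditions fibrewise. We will handle ergodicity this way and concentrate on the integrability conditions.

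The EOA condition comes from subadditivity. Write $a_n(\omega)=\log\inf_{\mathbb{T}}|(T^{(n)}_\omega)'|$. The chain rule gives $|(T^{(n+m)}_\omega)'(z)|=|(T^{(m)}_{\sigma^n\omega})'(T^{(n)}_\omega z)|\,|(T^{(n)}_\omega)'(z)|$, and each $T_\omega$ maps $\mathbb{T}$ to $\mathbb{T}$, so $a_{n+m}(\omega)\ge a_n(\omega)+a_m(\sigma^n\omega)$. Hence $-a_n$ is subadditive, and $a_1\in L^1$ by conditions (2) and (4) (note $\inf|T'_\omega|\le \deg T_\omega$ on $\mathbb{T}$, since the average of $|T'_\omega|$ over $\mathbb{T}$ equals the degree). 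Fekete's lemma then gives $\lim_n \frac1n\int a_n\,d\mathbb{P}=\sup_n\frac1n\int a_n\,d\mathbb{P}=:\lambda>0$. So there is $N_1$ with $\int a_{N_1}>0$. Superadditivity gives more: for every large $N$ we have $\int a_N\ge \lambda N/2>0$, which yields the EOA property for all $N\ge N_0$.

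Now the admissibility conditions for $T^{(N)}_\omega$. For (1), the degree is $\prod_{k<N} d_{\sigma^k\omega}\ge 2$ whenever some $d_{\sigma^k\omega}\ge2$, and this set contains $\{d_\omega\ge2\}$, which has positive measure. For (4), $a_N(\omega)\ge\sum_{k<N}a_1(\sigma^k\omega)$, which is integrable. For (2), $\log\deg T^{(N)}_\omega=\sum_k\log d_{\sigma^k\omega}$ and $-a_N\le-\sum_k a_1\circ\sigma^k$, so the integrand is bounded above by $\sum_k\log(d/\inf|T'|)\circ\sigma^k$, which is integrable. For (3), the chain rule gives
\[
\frac{(T^{(N)})''}{((T^{(N)})')^2}=\sum_{k=0}^{N-1}\frac{T''_{\sigma^k\omega}\circ T^{(k)}_\omega}{(T'_{\sigma^k\omega}\circ T^{(k)}_\omega)^2}\cdot\frac{1}{(T^{(N)}_{\sigma^{k+1}\omega})'\circ T^{(k+1)}_\omega}.
\]
This is obtained inductively from $(f\circ g)''/((f\circ g)')^2=f''(g)/f'(g)^2+g''/(g'^2 f'(g))$.

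The main obstacle is integrating this identity over $\mathbb{T}$. The plan is to bound the last factor by $e^{-a_{N-k-1}(\sigma^{k+1}\omega)}$. Then, since $T^{(k)}_\omega$ pushes Lebesgue measure forward to the measure with density $P_{\overline{\cdots}}$, we change variables and get
\[
\int\Bigl|\frac{T''_{\sigma^k\omega}}{(T'_{\sigma^k\omega})^2}\circ T^{(k)}_\omega\Bigr|\,d\text{Leb}\le\|P_{(T^{(k)}_\omega)(0)}\|_\infty\int\Bigl|\frac{T''_{\sigma^k\omega}}{(T'_{\sigma^k\omega})^2}\Bigr|\,d\text{Leb},
\]
with $\|P_x\|_\infty\le \frac{2}{1-|x|}$. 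To control $\log\frac{1}{1-|T^{(k)}_\omega(0)|}$ we use Schwarz--Pick, $1-|T_\omega(x)|\ge(1-|x|)(1-|T_\omega(0)|)/2$. This reduces the question to integrability of $\log(1-|T_\omega(0)|)$. That quantity is controlled by condition (2) or (4): $|T'_\omega(z)|=P_{T_\omega^{-1}\text{-type}}$ on the circle, and $\inf_{\mathbb{T}}|T'_\omega|\le\frac{1+|T_\omega(0)|}{1-|T_\omega(0)|}\cdot$, which, together with $\inf|T_\omega'|\ge\frac{1-|T_\omega(0)|}{1+|T_\omega(0)|}$ (Julia), gives $\log(1-|T_\omega(0)|)\ge\log\inf|T'_\omega|-\log 2$, integrable by (4). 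Finally, using $\log^+(\sum x_i y_i)\le\sum(\log^+x_i+\log^+y_i)+\log N$, every term is integrable, and (3) holds for $T^{(N)}$.
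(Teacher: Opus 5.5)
Your EOA argument and your treatment of conditions (1), (2), (4) match the paper's. Your chain-rule decomposition for (3) is also the one the paper's appendix uses; note the last factor should involve $T^{(N-k-1)}_{\sigma^{k+1}\omega}$, not $T^{(N)}_{\sigma^{k+1}\omega}$.

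\textbf{The gap.} It is in the final link of (3). After the Poisson-kernel change of variables and the Schwarz--Pick iteration, everything rests on $\int_\Omega\log\frac{1}{1-|T_\omega(0)|}\,d\mathbb{P}(\omega)<\infty$, and your justification of this is wrong:
\begin{itemize}
\item The inequality $\inf_{\mathbb{T}}|T'|\le\frac{1+|T(0)|}{1-|T(0)|}$ fails for $T(z)=z^2$.
\item The conclusion $\log(1-|T(0)|)\ge\log\inf_{\mathbb{T}}|T'|-\log 2$ fails for $T(z)=z^3$.
\item The Julia bound $\inf_{\mathbb{T}}|T'|\ge\frac{1-|T(0)|}{1+|T(0)|}$ only bounds $1-|T(0)|$ from above, which is the wrong direction.
\end{itemize}
More fundamentally, condition (4) cannot bound $\log(1-|T_\omega(0)|)$ from below at all. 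For $T(z)=\frac{z^d-s}{1-sz^d}$ with $s=\frac{d-1}{d+1}$ one has $\inf_{\mathbb{T}}|T'|=d\,\frac{1-s}{1+s}=1$, while $1-|T(0)|=\frac{2}{d+1}\to 0$ as $d\to\infty$. The degree has to enter.

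\textbf{The fix.} The missing ingredient is condition (2), used through the transfer-operator form of the Poisson identity you already invoke. The density of $T_*\text{Leb}$ is $P_{T(0)}(y)=\sum_{T(x)=y}|T'(x)|^{-1}$, so
\[
\frac{1+|T(0)|}{1-|T(0)|}=\|P_{T(0)}\|_\infty\le\frac{\deg T}{\inf_{\mathbb{T}}|T'|}.
\]
Hence $\log\frac{1}{1-|T_\omega(0)|}\le\log\frac{\deg T_\omega}{\inf_{\mathbb{T}}|T'_\omega|}$, which is integrable by (2), and your argument then closes.

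You can also apply this bound directly to $T^{(k)}_\omega$:
\[
\|P_{T^{(k)}_\omega(0)}\|_\infty\le\frac{\deg T^{(k)}_\omega}{\inf_{\mathbb{T}}|(T^{(k)}_\omega)'|}\le\prod_{j<k}\frac{d_{\sigma^j\omega}}{\inf_{\mathbb{T}}|T'_{\sigma^j\omega}|}.
\]
This makes the Schwarz--Pick step unnecessary, and it is exactly the factor the paper obtains by changing variables branch by branch.

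\textbf{Two smaller points.}
\begin{itemize}
\item Conditions (2) and (4) give integrability of $a_1^-$ but not of $a_1$; take $T_\omega(z)=z^{d_\omega}$ with $\log d_\omega\notin L^1$. This is harmless, since Fekete only needs the negative part.
\item Your caveat about ergodicity of $\sigma^N$ is a point the paper passes over. However, your claim that all $\sigma^N$-ergodic components carry the same integral of $a_N$ is false in general. Take $\Omega=\mathbb{Z}/2\mathbb{Z}$, $\sigma$ the swap, $N=2$, $T_0(z)=\frac{z-1/2}{1-z/2}$ and $T_1(z)=zT_0(z)$. Then $\inf_{\mathbb{T}}|(T_1\circ T_0)'|=4/9$, while $\inf_{\mathbb{T}}|(T_0\circ T_1)'|\approx 0.62$.
\end{itemize}
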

\begin{proof}
     Since $\lim_{n\to\infty}\int_{\Omega}\frac{1}{n}\log\inf_{z\in\mathbb{T}}|(T_{\omega}^{(n)})'(z)|d\mathbb{P}(\omega)>0$, there exists $N_{0}\in\mathbb{N}$ such that $f_{N_0}:=\int_{\Omega}\frac{1}{N_0}\log\inf_{z\in\mathbb{T}}|(T_{\omega}^{(N_0)})'(z)|d\mathbb{P}(\omega)>0.$ Since $f_n$ is subadditive, it follows that $f_N>0$ for all $N\ge N_{0}.$  In particular, $$Nf_N=\int_{\Omega}\log\inf_{z\in\mathbb{T}}|(T_{\omega}^{(N)})'(z)|d\mathbb{P}(\omega)>0.$$ One can show that if $\mathcal{T}$ is admissible, then for every fixed $N\in\mathbb{N},$ $\mathcal{T}^{(N)}$ is admissible (see Appendix \ref{N_admissible}), and thus the claim follows.
\end{proof}

\subsection{Pullback limits}\label{section_rfp}

In this section, we present several results concerning the limit points of the pullback $T_{\sigma^{-n}\omega}^{(n)}(z)$ for $z\in D,$ which will be used in Section \ref{section_EEOA} and \ref{section_proof_main_thm}.

\begin{definition}[Random attracting fixed point]
    Suppose there is measurable $c:\Omega\to\overline{D}$ such that for $\mathbb{P}$-a.e. $\omega\in\Omega$ and all $z\in D,$ $$\lim_{n\to\infty}T_{\sigma^{-n}\omega}^{(n)}(z)=c_{\omega}.$$ We call $\{c_{\omega}\}_{\omega\in\Omega}$ a random attracting fixed point of $\mathcal{T}$. 
\end{definition}

\begin{lemma}\label{N_admissible_rfp}
    Suppose admissible $\mathcal{T}$ is eventually EOA. Then, for $\mathbb{P}$-a.e. $\omega\in\Omega,$ $\mathcal{T}$ has a random attracting fixed point $\{c_{\omega}\}_{\omega\in\Omega}$ in $D.$   Let $\mu$ be the random measure that has disintegration $\{\mu_{\omega}\}_{\omega\in\Omega}$ with marginal $\mathbb{P},$ where $\frac{d\mu_{\omega}}{d\text{Leb}}=P_{c_{\omega}}.$ Then, $\mu$ is a random invariant measure for $\mathcal{T}^{*}.$
\end{lemma}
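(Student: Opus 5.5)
By Lemma \ref{N_EOA_admissible}, there is $N_0$ such that for every $N\ge N_0$ the $N$-step cocycle $\mathcal{T}^{(N)}=(T^{(N)}_\omega)_{\omega}$ over $(\Omega,\sigma^N,\mathbb{P})$ is admissible EOA. The plan is to apply Theorem \ref{buzzi_blaschke} to such a cocycle and then transfer its random fixed point back to $\mathcal{T}$.

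\textbf{Step 1: ergodicity.} Theorem \ref{buzzi_blaschke} requires ergodic driving, but $\sigma^N$ need not be ergodic. I would handle this first. Either choose $N\ge N_0$ so that $\sigma^N$ is ergodic, or, more robustly, decompose. For ergodic $\sigma$, the ergodic decomposition of $\sigma^N$ consists of at most $N$ sets $A,\sigma A,\dots,\sigma^{k-1}A$, with $k\mid N$, cyclically permuted by $\sigma$. One then applies the theorem on each piece with normalized measure. Admissibility on each piece should follow from the integrability conditions holding on $\Omega$, since restriction only rescales the integrals by the factor $k$. I expect the EOA condition itself to be the main obstacle: positivity of $\int\log\inf|(T^{(N)})'|$ over all of $\Omega$ does not obviously give positivity on each ergodic piece. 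I would try to circumvent this by replacing $N$ with a multiple $mN$ and using the subadditive ergodic theorem. The pointwise limit $\lim_n \frac1n \log\inf_{z\in\mathbb{T}}|(T^{(n)}_\omega)'(z)|$ is $\sigma$-invariant, hence a.s. equal to the positive constant from the eventually EOA hypothesis. Consequently, on each ergodic component of $\sigma^{mN}$, the averages $\frac{1}{mN}\int\log\inf|(T^{(mN)})'|$ converge to this positive constant as $m\to\infty$, so for $m$ large every component is EOA.

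\textbf{Step 2: convergence along multiples of $N$.} On each component, Theorem \ref{buzzi_blaschke}\ref{main_thm_rfp} gives a measurable $c_\omega\in D$ with $T^{(mN)}_{\sigma^{-mN}\omega}(z)\to c_\omega$ for all $z\in D$. Writing the component maps as $T^{(N)}$, the pullback $\sigma^{-N}$ version is exactly $T^{(mN)}_{\sigma^{-mN}\omega}$.

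\textbf{Step 3: convergence along the full sequence.} For $n=mN+r$ with $0\le r<N$,
\[
T^{(n)}_{\sigma^{-n}\omega}(z)=T^{(mN)}_{\sigma^{-mN}\omega}\big(T^{(r)}_{\sigma^{-n}\omega}(z)\big).
\]
The inner points $w_{m,r}=T^{(r)}_{\sigma^{-n}\omega}(z)$ lie in $D$ but vary with $m$, so pointwise convergence alone does not suffice. I would use the Schwarz--Pick lemma: each $T^{(mN)}$ is a contraction of the hyperbolic metric $\rho_D$, so
\[
\rho_D\big(T^{(mN)}_{\sigma^{-mN}\omega}(w),\,T^{(mN)}_{\sigma^{-mN}\omega}(0)\big)\le \rho_D(w,0).
\]
It therefore suffices that $\rho_D(w_{m,r},0)$ stays bounded along a subsequence of density one. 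Since $\omega\mapsto\rho_D(T^{(r)}_\omega(z),0)$ is a finite measurable function, it is $\mathbb{P}$-a.s. bounded on a set of measure close to $1$. Using $\sigma$-invariance of $\mathbb{P}$ together with a Borel--Cantelli or tightness argument, this bound should hold for all large $m$ outside a negligible set; if that fails, a density argument will be used instead. Since $T^{(mN)}_{\sigma^{-mN}\omega}(0)\to c_\omega\in D$, hyperbolic balls of bounded radius about these points shrink in the Euclidean metric only if the contraction is strict. Here the main tool is the uniqueness part of the theorem: because the limit is independent of $z$, the maps collapse compact sets to $c_\omega$. Applying this to the compact set $\overline{D_R}$ gives uniform convergence on it, which handles bounded $w$.

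\textbf{Step 4: invariance.} The limit $c$ is a random fixed point of $\mathcal{T}$ and satisfies $T_\omega(c_\omega)=c_{\sigma\omega}$, by continuity of $T_\omega$ applied to the pullback. For a Blaschke product $B$, the pushforward of harmonic measure satisfies $B_*(P_a\,d\mathrm{Leb})=P_{B(a)}\,d\mathrm{Leb}$. This follows from the Poisson integral: for $f\in C(\mathbb{T})$, $\int f\circ B\,P_a = u(B(a))$, where $u$ is the harmonic extension of $f$, because $u\circ B$ is harmonic with boundary values $f\circ B$. Hence $(T_\omega)_*\mu_\omega=\mu_{\sigma\omega}$. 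Measurability of $c$ holds because it is a pointwise limit of measurable maps.
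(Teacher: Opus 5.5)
Your skeleton is the paper's: Lemma \ref{N_EOA_admissible}, then Theorem \ref{buzzi_blaschke} applied to $\mathcal{T}^{(N)}$, then lifting the fixed point back to $\mathcal{T}$, then invariance via $B_*(P_a\,d\text{Leb})=P_{B(a)}\,d\text{Leb}$ (Theorem \ref{poisson_kernel}). Your Step 4 is fine. The paper applies Theorem \ref{buzzi_blaschke} over $\sigma^N$ without discussing ergodicity, and asserts the passage from multiples of $N$ to all $n$ without argument. So the two issues you isolate are real, but neither of your repairs works as written.

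\textbf{Step 1.} The conclusion ``for $m$ large every component is EOA'' does not follow. The ergodic components of $\sigma^{mN}$ depend on $m$ and can keep refining. For example, for the universal odometer $x\mapsto x+1$ on $\hat{\mathbb{Z}}$, $\sigma^{M}$ has $M$ ergodic components, each of measure $1/M$. So there is no fixed set on which the averages converge as $m\to\infty$. Let $\lambda>0$ be the a.s.\ limit of $\frac1n\log\inf_{\mathbb{T}}|(T^{(n)}_\omega)'|$. Kingman's theorem on a fixed component $A$ of $\sigma^N$ does give $\frac{1}{m}\int_A\log\inf_{\mathbb{T}}|(T^{(mN)}_\omega)'|\,d\mathbb{P}_A\to N\lambda>0$. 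However, $\sigma^{mN}$ need not be ergodic on $A$. Even the $L^1$ convergence of $\frac1M\log\inf_{\mathbb{T}}|(T^{(M)}_\omega)'|$ only makes the total measure of the non-EOA components of $\sigma^M$ small, not zero. So you never produce an ergodic EOA system to which Theorem \ref{buzzi_blaschke} applies. Closing this needs a version of that theorem for non-ergodic driving, or an argument that avoids it.

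\textbf{Step 3.} Set $f(\eta):=d_D(T^{(r)}_{\sigma^{-r}\eta}(z),0)$, so your inner points satisfy $d_D(w_{m,r},0)=f(\sigma^{-mN}\omega)$. The function $f$ is finite but in general unbounded, so a fixed bound for all large $m$ fails; Borel--Cantelli only gives bounds that grow with $m$. Convergence along a density-one set of $m$ also does not give convergence of the whole sequence. The missing step is to put the short block on the outside. Writing $\hat c_\omega$ for the limit along multiples of $N$,
\[
T^{(mN+r)}_{\sigma^{-(mN+r)}\omega}(z)=T^{(r)}_{\sigma^{-r}\omega}\big(T^{(mN)}_{\sigma^{-mN}(\sigma^{-r}\omega)}(z)\big)\longrightarrow T^{(r)}_{\sigma^{-r}\omega}(\hat c_{\sigma^{-r}\omega})\qquad(m\to\infty),
\]
by continuity of a single fixed map. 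So the limit along each residue class exists. Now use your Schwarz--Pick and compact-collapse argument only along the infinitely many $m$ with $f(\sigma^{-mN}\omega)\le\lceil f(\omega)\rceil$, which exist by Poincar\'e recurrence for $\sigma^{-N}$. Along those $m$, $w_{m,r}$ stays in a fixed compact hyperbolic ball, on which $T^{(mN)}_{\sigma^{-mN}\omega}\to\hat c_\omega$ uniformly. Hence the residue-class limit equals $\hat c_\omega$, and every residue class has the same limit.
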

\begin{proof}
From Lemma \ref{N_EOA_admissible}, we know that for some $N\in\mathbb{N},$ $\mathcal{T}^{(N)}$ is admissible EOA. Let $\hat{\sigma}=\sigma^{N}.$ It follows from Theorem \ref{buzzi_blaschke} applied to $\mathcal{T}^{(N)}$ that, for $\mathbb{P}$-a.e. $\omega\in\Omega$ and all $z\in D,$ $\lim_{n\to\infty}T_{\hat{\sigma}^{-n}\omega}^{(n)}(z)=\hat{c}_{\omega}\in D.$ For $k\in\mathbb{Z},$ take $m\in\mathbb{Z}$ and $0\le r\le N-1$ so that $k=mN+r.$ It follows that, for $\mathbb{P}$-a.e. $\omega\in\Omega$ and all $z\in D,$ $\lim_{n\to\infty}T_{\sigma^{-n}\omega}^{(n)}(z)=c_{\omega},$  where $c_{\sigma^{k}\omega}=T_{\sigma^{mN}\omega}^{(r)}(\hat{c}_{\hat{\sigma}^{r}\omega}).$
Note that $c_{\sigma\omega}=T_{\omega}(c_{\omega}).$ It follows that the measure $\mu$ with disintegration $\{\mu_{\omega}\}_{\omega\in\Omega}$ with marginal $\mathbb{P},$ where $\frac{d\mu_{\omega}}{d\text{Leb}}=P_{x_{\omega}},$ is a random invariant measure for $\mathcal{T}^{*}.$
\end{proof}

\begin{definition}
    Given an admissible Blaschke product cocycle $(\mathcal{T}, \sigma, \mathbb{P})$ where  $\mathcal{T}:=(T_{\omega})_{\omega\in\Omega},$ we define $\Omega_{D}$ and $\Omega_{\mathbb{T}}$ as follows.
    \begin{align*}
        &\Omega_{D}:=\{\omega\in\Omega:(T_{\sigma^{-n}\omega}^{(n)}(0))_{n=1}^{\infty}\text{ has a limit point in } D\};\text{ and }\\&\Omega_{\mathbb{T}}:=\{\omega\in\Omega:(T_{\sigma^{-n}\omega}^{(n)}(0))_{n=1}^{\infty}\text{ has a limit point in  } \mathbb{T}\}.  
    \end{align*}
\end{definition}

\begin{proposition}\label{constant_full_measure}
    For a Blaschke product cocycle $\mathcal{T},$ $\Omega_{D}$ and $\Omega_{\mathbb{T}}$ are measurable, $\Omega=\Omega_{D}\cup\Omega_{\mathbb{T}},$ and each of the sets has either full or zero $\mathbb{P}$-measure. 
\end{proposition}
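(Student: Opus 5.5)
Write $z_n(\omega):=T_{\sigma^{-n}\omega}^{(n)}(0)$. The plan is to prove measurability directly from the definitions and to get the zero--one law from ergodicity, once we know both sets are $\sigma$-invariant.

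\textbf{Measurability.} Each map $\omega\mapsto z_n(\omega)$ is measurable. This holds because $T_\omega(z)$ is jointly measurable in $(\omega,z)$: on each $\Omega_j$ it is a continuous function of $(\rho_\omega,a_\omega,z)$, and the pieces $\Omega_j$ are measurable. The composite $z_n(\omega)=T_{\sigma^{-1}\omega}\circ\cdots\circ T_{\sigma^{-n}\omega}(0)$ is then measurable, since $\sigma$ is measurable and invertible. Next, the sequence has a limit point in $D$ if and only if there is a rational $r<1$ with $|z_n|\le r$ for infinitely many $n$. Thus
\[\Omega_D=\bigcup_{r\in\mathbb{Q}\cap(0,1)}\bigcap_{N}\bigcup_{n\ge N}\{|z_n(\omega)|\le r\}.\]
Similarly, the sequence has a limit point in $\mathbb{T}$ if and only if $\limsup_n|z_n|=1$. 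Hence $\Omega_{\mathbb{T}}=\bigcap_{k}\bigcap_N\bigcup_{n\ge N}\{|z_n|>1-1/k\}$. Both sets are therefore measurable.

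\textbf{Covering.} Each $T_\omega$ maps $D$ into $D$, so every $z_n$ lies in the compact set $\overline D$. The sequence therefore has some accumulation point in $\overline D=D\cup\mathbb{T}$, which gives $\Omega=\Omega_D\cup\Omega_{\mathbb{T}}$.

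\textbf{Invariance.} This is the main step. We have the identity $z_{n+1}(\sigma\omega)=T_\omega(z_n(\omega))$.
\begin{itemize}
\item Suppose $\omega\in\Omega_D$, with $z_{n_k}(\omega)\to c\in D$. By continuity of $T_\omega$ on $D$, $z_{n_k+1}(\sigma\omega)\to T_\omega(c)\in D$, so $\sigma\omega\in\Omega_D$.
\item Conversely, suppose $\sigma\omega\in\Omega_D$. Then $T_\omega(z_{n_k}(\omega))$ stays in a compact disc $\{|w|\le r\}$. Since $T_\omega$ is a finite Blaschke product, it is proper on $D$: $|T_\omega(z)|\to1$ as $|z|\to1$, because $|T_\omega|=1$ on $\mathbb{T}$ and $T_\omega$ is continuous on $\overline D$. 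Hence $|z_{n_k}(\omega)|$ is bounded away from $1$, and $\omega\in\Omega_D$.
\item The same two arguments, using $|T_\omega(z)|\to1$ if and only if $|z|\to1$, show that $\Omega_{\mathbb{T}}$ is invariant: $\limsup|z_n(\omega)|=1$ if and only if $\limsup|z_{n}(\sigma\omega)|=1$.
\end{itemize}
So both sets satisfy $\sigma^{-1}A=A$.

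\textbf{Conclusion.} By ergodicity of $\sigma$, each of $\Omega_D$ and $\Omega_{\mathbb{T}}$ has $\mathbb{P}$-measure $0$ or $1$.

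The only delicate point is the properness of $T_\omega$ on $D$, which is used in the reverse inclusions. It follows because $T_\omega$ is a finite Blaschke product, continuous on the closed disc with $|T_\omega|=1$ exactly on $\mathbb{T}$. Note also that the sets need not be disjoint. The statement only claims the covering and the zero--one dichotomy, so disjointness is not needed.
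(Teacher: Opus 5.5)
Your proposal is correct and follows the same outline as the paper's proof. Measurability uses the same $\limsup$-set descriptions, the covering comes from compactness of $\overline D$, and the zero--one law comes from $\sigma$-invariance plus ergodicity. The one real difference is the invariance step. The paper proves only the forward implication $\omega\in\Omega_D\Rightarrow\sigma\omega\in\Omega_D$, and then uses invertibility and measure preservation of $\sigma$ to get invariance modulo null sets. You instead obtain exact two-sided invariance $\sigma^{-1}A=A$ for every $\omega$ from the properness of the finite Blaschke product $T_\omega$ on $D$, which is a slightly stronger conclusion. Your joint-measurability argument for $\omega\mapsto z_n(\omega)$ is also more careful than the paper's brief treatment of that point.
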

\begin{proof}
    By assumption,  $\omega\mapsto T_{\omega}(0)=\rho_{\omega}\prod_{i=1}^{d_{\omega}}a_{i,\omega}$ is measurable. First, we show $\Omega_{D}$ is measurable. Let $z_{\omega,n}:=T_{\sigma^{-n}\omega}^{(n)}(0)$ and $D_{R}$ denote the disc of radius $R$ centered at the origin. For fixed $n\in\mathbb{N}$ and $R<1,$ it follows that $$\{\omega\in\Omega:z_{\omega, n}\in D_{R}\}$$ is measurable. Thus,  $$\bigcap_{n=1}^{\infty}\bigcup_{k=n}^{\infty}\{\omega\in\Omega:z_{\omega, k}\in D_{R}\}$$ is measurable. Let $R_{j}=1-\frac{1}{j}.$ We have that $$\bigcup_{j=1}^{\infty}\bigcap_{n=1}^{\infty}\bigcup_{k=n}^{\infty}\{\omega\in\Omega:z_{\omega, k}\in D_{R_j}\}=\Omega_{D}$$ is measurable.
    Similarly, it follows that $$\Omega_{\mathbb{T}}=\bigcap_{j=1}^{\infty}\bigcap_{n=1}^{\infty}\bigcup_{k=n}^{\infty}\{\omega\in\Omega:z_{\omega,k}\in D\backslash D_{R_{j}}\}$$ is measurable.
    
    For all $\omega\in\Omega$ and $n\in\mathbb{N},$ $|T_{\sigma^{-n}\omega}^{(n)}(0)|<1.$ It follows that $T_{\sigma^{-n}\omega}^{(n)}(0)$ has a subsequence converging to a point in $\overline{D}.$ Thus, $\Omega=\Omega_{D}\cup\Omega_{\mathbb{T}}$ and $\mathbb{P}(\Omega_{D}\cup\Omega_{\mathbb{T}})=1.$

    Let us first suppose $\omega\in\Omega_{D},$ so there exists a subsequence $n_k:=n_k(\omega)$ such that $T_{\sigma^{-n_k}\omega}^{(n_k)}(0)$ converges some $c_{\omega}\in D.$ Now consider $\sigma\omega.$  We have $$T_{\sigma^{-(n_k+1)}(\sigma\omega)}^{(n_k+1)}(0)=T_{\omega}\circ T_{\sigma^{-n_k}\omega}^{(n_k)}(0).$$ Since $T_{\sigma^{n_k}\omega}^{(n_k)}(0)$ converges to $c_{\omega}\in D$ as $k\to\infty$ and $T_{\omega}$ is a finite Blaschke product, it follows that $T_{\sigma^{-(n_k+1)}(\sigma\omega)}^{(n_k+1)}(0)$ converges to $d_{\omega}=T_{\omega}(c_{\omega})\in D.$ Thus, $T_{\sigma^{-n}(\sigma\omega)}^{(n)}(0)$ has a subsequence converging to $d_{\omega}\in D$ and $\sigma\omega\in\Omega_{D}.$ Since $\sigma$ is ergodic, invertible and $\mathbb{P}$-preserving and $\Omega_{D}$ is measurable, either $\mathbb{P}(\Omega_{D})=0$ or  $\mathbb{P}(\Omega_{D})=1.$ Replacing $c_{\omega}\in D$ with $c_{\omega}\in\mathbb{T},$ the proof follows similarly for $\Omega_{\mathbb{T}}.$ 
\end{proof}

Recall $d_{D},$ the hyperbolic distance on $D,$ where $d_{D}(y,z)=\log\left(\frac{1+\left|\frac{z-y}{1-\bar{y}z}\right|}{1-\left|\frac{z-y}{1-\bar{y}z}\right|}\right)$ for $y,z\in D.$
\begin{theorem}[Schwarz-Pick Lemma \cite{hyperbolic_metric}]\label{scwarz_pick}
    Suppose that $T:D\to D$ is holomorphic.  Then either
    \begin{enumerate}
        \item $T$ is a hyperbolic contraction, that is, for all $y,z\in D,$ $$d_{D}(T(y),T(z))<d_{D}(y,z);$$ or
        \item $T$ is a conformal automorphism of $D$ and, for all $y,z\in D,$ $$d_{D}(T(y),T(z))=d_{D}(y,z).$$
    \end{enumerate}
\end{theorem}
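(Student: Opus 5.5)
The Schwarz--Pick Lemma is classical and is only cited here, but I would prove it by reducing to the Schwarz lemma with disc automorphisms. For $a\in D$ let $\phi_a(z)=\frac{z-a}{1-\bar a z}$. This is a conformal automorphism of $D$ with $\phi_a(a)=0$ and $\phi_a^{-1}=\phi_{-a}$. The hyperbolic distance in the statement is $d_D(y,z)=\log\frac{1+|\phi_y(z)|}{1-|\phi_y(z)|}$. Since $t\mapsto\log\frac{1+t}{1-t}$ is strictly increasing on $[0,1)$, comparing distances amounts to comparing pseudo-hyperbolic distances $|\phi_y(z)|$.

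First I would check that each $\phi_a$, and each rotation, preserves $d_D$. This reduces to the identity $|\phi_{\phi_a(y)}(\phi_a(z))|=|\phi_y(z)|$. To verify it, note that $1-|\phi_y(z)|^2=\frac{(1-|y|^2)(1-|z|^2)}{|1-\bar y z|^2}$, and that $1-|\phi_a(w)|^2=\frac{(1-|a|^2)(1-|w|^2)}{|1-\bar a w|^2}$. It then remains to compute $|1-\overline{\phi_a(y)}\phi_a(z)|$, which is a routine algebraic check. Every automorphism of $D$ has the form $e^{i\theta}\phi_a$, so all automorphisms are isometries, which gives case (2).

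For general holomorphic $T:D\to D$, fix $y\in D$ and set $g=\phi_{T(y)}\circ T\circ\phi_{-y}$. Then $g$ maps $D$ holomorphically into $D$ and $g(0)=0$. The classical Schwarz lemma, proved via the maximum modulus principle applied to $g(w)/w$, gives exactly two options. Either $|g(w)|<|w|$ for all $w\neq0$, or $g$ is a rotation. Put $w=\phi_y(z)$. In the first option, $|\phi_{T(y)}(T(z))|<|\phi_y(z)|$ for all $z\ne y$, so $d_D(T(y),T(z))<d_D(y,z)$. In the second option, $T=\phi_{-T(y)}\circ(e^{i\theta}\cdot)\circ\phi_y$ is an automorphism and therefore an isometry.

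The main subtlety is that the dichotomy must be global. A priori, whether $g$ is a rotation could depend on the chosen base point $y$. If $T$ is not an automorphism, then for no $y$ is the corresponding $g$ a rotation, since otherwise $T$ would be an automorphism. Hence strict contraction holds for every pair $y\neq z$. Conversely, if $T$ is an automorphism, it is an isometry by the first step. So exactly one of the two cases holds.
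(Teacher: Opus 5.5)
The paper gives no proof of this result; it simply quotes it from Beardon--Minda. Your argument is the standard proof (conjugate by disc automorphisms so that $y$ and $T(y)$ sit at $0$, then apply Schwarz's lemma), and it is correct. In particular, you rightly read case (1) as a strict inequality only for $y\neq z$, since as literally written it fails trivially at $y=z$, and you correctly check that the dichotomy does not depend on the base point.

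One small economy: the algebraic isometry computation is not needed. Schwarz's lemma applied to $g=\phi_{T(y)}\circ T\circ\phi_{-y}$ at $w=\phi_y(z)$ already gives $|\phi_{T(y)}(T(z))|\le|\phi_y(z)|$ directly. Applying this to both $T$ and $T^{-1}$ then shows that every automorphism is a $d_D$-isometry.
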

\begin{remark}
    The conformal automorphisms of the unit disc are maps of the form $T(z)=\theta\frac{z-a}{1-\bar{a}z},$ where $\theta\in\mathbb{T}, a\in D,$ that is, Blaschke products of degree 1. Any Blaschke product of degree at least 2 is a hyperbolic contraction.
\end{remark}

In the next proposition, we show that if $\mathbb{P}(\Omega_{\mathbb{T}})=0,$ then, for $\mathbb{P}$-a.e. $\omega\in\Omega,$ $T_{\sigma^{-n}\omega}^{(n)}(0)$ converges to a (unique) measurable limit in $D.$ This proposition will be used in the proof of Proposition \ref{rfp_oi_eoa}.
\begin{proposition}\label{no_circle_pt}
    Suppose $\mathcal{T}$ is admissible and $\mathbb{P}(\Omega_{\mathbb{T}})=0.$ Then, there exists measurable $c:\Omega\to\overline{D}$ such that for $\mathbb{P}$-a.e. $\omega\in\Omega,$ $\lim_{n\to\infty}T^{(n)}_{\sigma^{-n}\omega}(0)=c_{\omega}\in D$ and $c_{\sigma\omega}=T_{\omega}(c_{\omega}).$ 
\end{proposition}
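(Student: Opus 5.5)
The plan is to show that the pullback sequence $z_n(\omega):=T^{(n)}_{\sigma^{-n}\omega}(0)$ is Cauchy for the hyperbolic metric $d_D$, for $\mathbb{P}$-a.e. $\omega$. The ingredients are:
\begin{itemize}
\item Proposition \ref{constant_full_measure}, which lets us assume $\mathbb{P}(\Omega_D)=1$;
\item the Schwarz--Pick Lemma (Theorem \ref{scwarz_pick});
\item ergodicity of $\sigma$, together with condition (1) of admissibility.
\end{itemize}

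\textbf{Step 1: setup.} Since $\mathbb{P}(\Omega_{\mathbb{T}})=0$, for a.e. $\omega$ the sequence $(z_n(\omega))$ has no accumulation point on $\mathbb{T}$. Hence it stays in a compact subset of $D$. Define
\[
r(\omega):=\sup_{n\ge0} d_D(0,z_n(\omega))<\infty .
\]
This is a measurable function, being a countable supremum of measurable functions. Write $F_n^\omega:=T^{(n)}_{\sigma^{-n}\omega}$. The cocycle identity gives, for $m>n$,
\[
z_m(\omega)=F^\omega_n\bigl(z_{m-n}(\sigma^{-n}\omega)\bigr), \qquad z_n(\omega)=F^\omega_n(0).
\]
More generally, for $k\le n$, the intermediate points of the orbit segment from $z_{m-n}(\sigma^{-n}\omega)$ are $z_{m-k}(\sigma^{-k}\omega)$. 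Thus every intermediate point lies in the hyperbolic ball $B_{r(\sigma^{-k}\omega)}$ about $0$. The same holds for the orbit of $0$, so both endpoints of the comparison stay in controlled balls at every stage.

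\textbf{Step 2: quantitative contraction.} For a Blaschke product $T$ of degree at least $2$ and $M>0$, set
\[
\lambda(T,M):=\sup\bigl\{\|DT(x)\|_{\mathrm{hyp}} : x\in B_M\bigr\}.
\]
By Schwarz--Pick, $\lambda(T,M)<1$, and by compactness of $\overline{B_M}$ it depends measurably on the coefficients of $T$. Using condition (1) of admissibility and $r<\infty$ a.s., choose $M$ and $\lambda<1$ so that the set
\[
G:=\{\omega:\deg T_\omega\ge2,\ \lambda(T_\omega,2M+1)\le\lambda,\ r(\omega)\le M,\ r(\sigma\omega)\le M\}
\]
has positive measure. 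By Birkhoff's theorem applied to $\sigma^{-1}$, for a.e. $\omega$ the number of indices $k\le n$ with $\sigma^{-k}\omega\in G$ grows linearly in $n$.

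\textbf{Step 3: telescoping estimate.} Join $z_{m-n}(\sigma^{-n}\omega)$ and $0$ by a hyperbolic geodesic inside $B_{r(\sigma^{-n}\omega)}$ and push it forward through the factors of $F_n^\omega$. Each factor does not increase hyperbolic length. At a good time the pushed curve must lie in $B_{2M+1}$, so that factor multiplies its length by at most $\lambda$. This yields
\[
d_D\bigl(z_m(\omega),z_n(\omega)\bigr)\le \lambda^{\#\{k\le n:\ \sigma^{-k}\omega\in G\}}\, r(\sigma^{-n}\omega).
\]
Since $r(\sigma^{-n}\omega)$ is subexponential along a.e. orbit (it is finite and stationary, so $\tfrac1n\log^+ r(\sigma^{-n}\omega)\to0$ by a Borel--Cantelli argument; I would simply restrict to $n$ with $r(\sigma^{-n}\omega)\le M$ if needed), the right-hand side tends to $0$. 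So the sequence is Cauchy and converges to some $c_\omega\in D$.

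\textbf{Step 4: conclusion.} Measurability of $c$ follows because it is a pointwise limit of the measurable maps $\omega\mapsto z_n(\omega)$. The equivariance $c_{\sigma\omega}=T_\omega(c_\omega)$ follows from $z_{n+1}(\sigma\omega)=T_\omega(z_n(\omega))$ and continuity of $T_\omega$ on $D$.

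\textbf{Main obstacle.} The hard step is Step 3: ensuring the pushed-forward curve, and not just its endpoints, stays in a fixed compact set at the good times. The endpoints are controlled by $r(\sigma^{-k}\omega)$, but the image of the geodesic has length at most $r(\sigma^{-n}\omega)$, which could be large. The first approach I would try is to split the estimate: first use the good times in a tail block $k\in[n/2,n]$ to shrink the curve to length at most $1$. For this, choose $n$ along times where $r(\sigma^{-n}\omega)\le M$, which happens with positive frequency. From then on the curve stays within distance $1$ of the controlled endpoint, hence inside $B_{2M+1}$ at good times. If this fails, the fallback is the standard argument that a bounded orbit of a nonexpanding sequence with infinitely many uniform contractions on a fixed compact set has a unique limit point.
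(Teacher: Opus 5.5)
Your argument is correct in outline but takes a genuinely different route from the paper. The paper never estimates $d_D(z_m(\omega),z_n(\omega))$ directly. Instead:
\begin{itemize}
\item it takes the limit set $L(\omega)$ of $(z_n(\omega))$, which is compact in $D$, and proves the equivariance $L(\sigma\omega)=T_\omega(L(\omega))$;
\item by Schwarz--Pick, $\omega\mapsto\mathrm{diam}_D L(\omega)$ is non-increasing along $\sigma$-orbits, hence a.s.\ $\sigma$-invariant because $\sigma$ preserves $\mathbb{P}$;
\item compactness makes the decrease strict whenever $\deg T_\omega\ge2$ and the diameter is positive, so condition (1) of admissibility forces $\mathrm{diam}_D L(\omega)=0$ a.s.
\end{itemize}
That argument is shorter and needs nothing about $r(\omega)$ beyond finiteness. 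Your Cauchy estimate is more hands-on, but it also gives an exponential rate along the times when $r(\sigma^{-n}\omega)$ is bounded, governed by the frequency of visits to $G$.

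The obstacle you flag in Step 3 is real as written, since the pushed curve may have length up to $r(\sigma^{-n}\omega)$. It disappears if you do not push a single curve:
\begin{itemize}
\item At stage $k$ the two current points $z_{m-k}(\sigma^{-k}\omega)$ and $z_{n-k}(\sigma^{-k}\omega)$ lie in $B_{r(\sigma^{-k}\omega)}$.
\item Hyperbolic balls are geodesically convex, so at a good time the geodesic joining these points lies in $B_M$, and $T_{\sigma^{-k}\omega}$ shrinks their distance by the factor $\lambda$.
\item At all other times Schwarz--Pick gives no expansion.
\end{itemize}
This gives $d_D(z_m(\omega),z_n(\omega))\le\lambda^{\#\{k\le n:\,\sigma^{-k}\omega\in G\}}\,r(\sigma^{-n}\omega)$ for all $m>n$, with no splitting into blocks. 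Alternatively, restricting to $n$ with $r(\sigma^{-n}\omega)\le M$ already makes your pushed-curve argument work directly: the curve then has length at most $M$ and stays in $B_{2M}$ at good times.

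You should drop the Borel--Cantelli claim that $\frac1n\log^+r(\sigma^{-n}\omega)\to0$. It would need something like $\log^+ r\in L^1$, which you do not have. It is also unnecessary: along the infinitely many $n_j$ with $r(\sigma^{-n_j}\omega)\le M$, you get $d_D(z_m(\omega),z_{m'}(\omega))\le 2M\lambda^{\#\{k\le n_j:\,\sigma^{-k}\omega\in G\}}$ for all $m,m'>n_j$, which is exactly the Cauchy property.
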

\begin{proof}
    Take $\omega\in\Omega\backslash\Omega_{\mathbb{T}}.$ Let $L(\omega)$ denote the set of limit points of $T_{\sigma^{-n}\omega}^{(n)}(0)$ in $D.$ Since $\omega\not\in\Omega_{\mathbb{T}},$ it follows that $L(\omega)$ is a non-empty closed subset of $D$, and there exists $r_{\omega}<1$ such that for all $n\in\mathbb{N},$ $T_{\sigma^{-n}\omega}^{(n)}(0)\in\overline{D}_{r_{\omega}},$ and $L(\omega)\subseteq\overline{D}_{r_{\omega}}.$ Now consider $\sigma\omega.$ We have $T_{\sigma^{-n}(\sigma\omega)}^{(n)}(0)=T_{\omega}(T_{\sigma^{-(n-1)}\omega}^{(n-1)}(0))$ and so, if $x\in L(\omega)$ then $T_{\omega}(x)\in L(\sigma\omega).$  Thus, $T_{\omega}(L(\omega))\subseteq L(\sigma\omega).$ Now consider $y\in L(\sigma\omega).$ There exists $n_k$ such that $$y=\lim_{k\to\infty}T_{\sigma^{-n_k}(\sigma\omega)}^{(n_k)}(0)=\lim_{k\to\infty}T_{\omega}(T_{\sigma^{-(n_k-1)}\omega}^{(n_k-1)}(0)).$$ Since for all $n\in\mathbb{N}$ $T_{\sigma^{-n}\omega}^{(n)}(0)\in\overline{D}_{r_{\omega}},$ it follows that there is a further subsequence $n_{k_j}$ and $x\in\overline{D}_{r_{\omega}}$ such that $\lim_{j\to\infty}T_{\sigma^{-(n_{k_j}-1)}\omega}^{(n_{k_j}-1)}(0)=x,$ and so $x\in L(\omega).$ It follows that $y=T_{\omega}(x)\in T_{\omega}(L(\omega)).$ Thus, $L(\sigma\omega)=T_{\omega}(L(\omega)).$ 

    Let $\text{diam}_D$ denote the diameter in the hyperbolic distance on the disc, that is, for $A\subset D,$ $\text{diam}_D(A)=\sup_{y,z\in A}d_{D}(y,z).$ Since $\overline{D}_{r_\omega}$ is a compact subset of $D,$ it follows that $\text{diam}_D(L(\omega))\le\text{diam}_{D}(\overline{D}_{r_{\omega}})<\infty.$ 
    
    From Theorem \ref{scwarz_pick}, for all $\omega\in\Omega,$ $T_{\omega}$ is a semicontraction of the hyperbolic distance, that is, for all $y, z\in D,$ $$\begin{cases}
        d_{D}(T_{\omega}(y),T_{\omega}(z))<d_{D}(y,z),&\text{if }\deg(T_{\omega})\ge2\\d_{D}(T_{\omega}(y),T_{\omega}(z))=d_{D}(y,z),&\text{if }\deg(T_{\omega})=1.
    \end{cases}$$ In particular, for any compact $K\subset D,$ if $\deg(T_{\omega})\ge2,$ there exists $c_{K}:=c_{K}(\omega)<1$ such that for all $y,z\in K,$ $$d_{D}(T_{\omega}(y),T_{\omega}(z))\le c_{K}d_{D}(y,z).$$ If $\text{diam}_D(L(\omega))=0,$ then the claim follows. Suppose that $\text{diam}_D(L(\omega))>0.$  Since $L(\sigma\omega)=T_{\omega}(L(\omega)),$ we have \begin{align*}
        \text{diam}_D(L(\sigma\omega))&=\text{diam}_D(T_{\omega}(L(\omega))\\&=\sup_{y,z\in T_{\omega}(L(\omega))}d_{D}(y,z)\\&=\sup_{y,z\in L(\omega)}d_{D}(T_{\omega}(y), T_{\omega}(z))\\&\le\sup_{y,z\in L(\omega)}d_{D}(y,z)\\&=\text{diam}_D( L(\omega)).
\end{align*}

Since $\sigma$ is ergodic and $\mathbb{P}$-preserving, $\text{diam}_{D}(L(\omega))=\text{diam}_{D}(L(\sigma\omega))$ almost surely. Since $L(\omega)$ is closed and a subset of $\overline{D}_{r_\omega},$ $L(\omega)$ is compact (with respect to the hyperbolic distance). Thus, if $\text{diam}_{D}(L(\omega))>0,$ then $$\begin{cases}\text{diam}_{D}(L(\sigma\omega)) <\text{diam}_D(L(\omega)),&\text{if }\deg(T_{\omega})\ge2\\ \text{diam}_{D}(L(\sigma\omega))=\text{diam}_D(L(\omega)),&\text{if }\deg(T_{\omega})=1.\end{cases}$$ Since $\mathbb{P}(\omega\in\Omega:\deg(T_{\omega})\ge2)>0,$ for $\mathbb{P}$-a.e. $\omega\in\Omega,$ there is a $n:=n(\omega)$ such that $\text{diam}_{D}(L(\sigma^{n}\omega))<\text{diam}_{D}(L(\omega)),$ a contradiction.
  It follows that for $\mathbb{P}$-a.e. $\omega\in\Omega,$ $\text{diam}_{D}(L(\omega))=0,$ and so there exists $c_{\omega}\in\overline{D}_{r_{\omega}}$ such that $L(\omega)=\{c_{\omega}\}.$ 
Thus, for $\mathbb{P}$-a.e. $\omega\in\Omega,$ $\lim_{n\to\infty}T_{\sigma^{-n}\omega}^{(n)}(0)=c_{\omega}.$ Furthermore, $c_{\omega}$ is measurable, since it is the limit of measurable functions, and $$c_{\sigma\omega}=\lim_{n\to\infty}T_{\sigma^{-n}(\sigma\omega)}^{(n)}(0)=\lim_{n\to\infty}T_{\omega}(T_{\sigma^{-(n-1)}\omega}^{(n-1)}(0))=T_{\omega}(c_{\omega}).$$

\end{proof}
\begin{remark}
    The above result holds replacing $0$ with any fixed $z\in D.$ If $c_{\omega}$ is independent of the choice of $z\in D,$ then $\{c_{\omega}\}_{\omega\in\Omega}$ is a random attracting fixed point.
\end{remark}

\subsection{Characterization of eventually expanding on average cocycles}\label{section_EEOA}

In this section, we will show that if admissible $\mathcal{T}$ has a random fixed point, then it is eventually EOA.

The following theorem of Tischler gives a lower bound on the derivative of a Blaschke product $T$ on the circle.
\begin{theorem}[Theorem 2(i) \cite{tischler}]\label{tischler}
    Let $T$ be a finite Blaschke product. Let $r:=r_{T}(R)=\max_{|z|=R}|T(z)|.$ Suppose for some $R<1$ that $r<R.$ Then,  $\inf_{z\in\mathbb{T}}|T'(z)|\ge\frac{\log r}{\log R}>1.$
\end{theorem}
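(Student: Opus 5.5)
The plan is to compare $\log|T|$ with an explicit radial harmonic function on the annulus $A_R:=\{R<|z|<1\}$, and then read off the derivative on $\mathbb{T}$ as a normal derivative. Set $\lambda:=\frac{\log r}{\log R}$. Note that $0<r<R<1$: we have $r>0$ because a nonconstant Blaschke product has only finitely many zeros, and it cannot vanish identically on $|z|=R$. Hence $\log r<\log R<0$, and so $\lambda>1$, which gives the final strict inequality. Define $h(z):=\lambda\log|z|$. This is harmonic on $A_R$, equals $0$ on $\mathbb{T}$, and equals $\log r$ on $|z|=R$.

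\textbf{Step 1 (maximum principle).} The function $u:=\log|T|$ is subharmonic on a neighbourhood of $\overline{A_R}$. It is harmonic away from the zeros $a_i$ of $T$, and equals $-\infty$ at any zero lying in $A_R$. On $\mathbb{T}$ we have $u=0=h$, since $|T|=1$ there. On $|z|=R$ we have $u\le\log r=h$ by the definition of $r$. Since $u-h$ is subharmonic and $\le 0$ on $\partial A_R$, the maximum principle gives $u\le h$ on $A_R$.

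\textbf{Step 2 (compare radial derivatives at the boundary).} Fix $\zeta\in\mathbb{T}$. For $\rho<1$ close to $1$, both $u$ and $h$ vanish at $\zeta$, so
$$\frac{u(\zeta)-u(\rho\zeta)}{1-\rho}\ \ge\ \frac{h(\zeta)-h(\rho\zeta)}{1-\rho}.$$
Both functions are smooth near $\mathbb{T}$, since $T$ has no zeros on $\mathbb{T}$. Letting $\rho\to1^-$ therefore gives $\partial_\rho u(\zeta)\ge\partial_\rho h(\zeta)=\lambda$, where $\partial_\rho$ is the radial derivative.

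\textbf{Step 3 (identify the radial derivative with $|T'(\zeta)|$).} This is the step needing care. Since $u=\operatorname{Re}\log T$ locally, we get $\partial_\rho u(\rho\zeta)|_{\rho=1}=\operatorname{Re}\bigl(\zeta T'(\zeta)/T(\zeta)\bigr)$. For a finite Blaschke product, logarithmic differentiation gives
$$\frac{\zeta T'(\zeta)}{T(\zeta)}=\sum_{i}\frac{1-|a_i|^2}{|\zeta-a_i|^2}=\sum_i P_{a_i}(\zeta)>0\qquad(\zeta\in\mathbb{T}).$$
This quantity is real and positive. Because $|\zeta|=|T(\zeta)|=1$, it also has modulus $|T'(\zeta)|$, so it equals $|T'(\zeta)|$. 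Hence $|T'(\zeta)|=\partial_\rho u(\zeta)\ge\lambda$ for every $\zeta\in\mathbb{T}$.

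Taking the infimum over $\zeta$ yields $\inf_{\mathbb{T}}|T'|\ge\frac{\log r}{\log R}>1$. The main obstacle is the identity in Step 3, and in particular checking that each factor $\frac{z-a}{1-\bar a z}$ contributes exactly $P_a(\zeta)$ to $\zeta T'/T$ on $\mathbb{T}$; the rest is a routine application of the maximum principle.
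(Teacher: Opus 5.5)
The paper does not prove this statement. It is quoted from Tischler and used as a black box, so there is no internal argument to compare yours with; judged on its own, your proof is correct and complete.

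The comparison $\log|T|\le\lambda\log|z|$ on $\{R<|z|<1\}$ is valid. The function $\log|T|-\lambda\log|z|$ is subharmonic and upper semicontinuous on the closed annulus. Any zeros of $T$ there only make $\log|T|$ smaller, so they do no harm.

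The one-sided difference quotient at $\zeta\in\mathbb{T}$ passes to the limit because $T$ is holomorphic and zero-free near $\mathbb{T}$. You flag the identity $\zeta T'(\zeta)/T(\zeta)=\sum_i P_{a_i}(\zeta)$ as the main obstacle. It is exactly the computation the paper carries out in its appendix (Proposition \ref{derivative_bounds}), so it is already available.

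Two small remarks.
\begin{itemize}
\item A degree-zero Blaschke product (a unimodular constant) has $r=1$, so the hypothesis $r<R$ excludes it automatically.
\item Your argument never uses strictness. It shows that $\inf_{\mathbb{T}}|T'|\ge\log r/\log R$ whenever $\max_{|z|=R}|T|\le r$ for some $r\in(0,1)$. In particular it gives $\inf_{\mathbb{T}}|T'|\ge 1$ when $r\le R$.
\end{itemize}

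The second remark matters for the paper. In the proof of Proposition \ref{rfp_oi_eoa}, the bound $\inf_{\mathbb{T}}|\tilde{T}_{\omega}'|\ge1$ for $\omega\notin\tilde{\Omega}_{\varepsilon}\cap\Omega_{\delta}$ comes from $\sup_{|z|=R}|\tilde{T}_\omega|\le R$. That non-strict case is not literally covered by the statement as quoted, but your argument covers it. Your argument likewise justifies applying the bound with the upper estimate $r$ rather than the exact maximum for the other maps.
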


\begin{proposition}\label{rfp_oi_eoa}
    Let $\mathcal{T}$ be an admissible Blaschke product cocycle. Suppose that there is measurable $c:\Omega\to D$ such that for $\mathbb{P}$-a.e. $\omega\in\Omega,$ $\lim_{n\to\infty}T_{\sigma^{-n}\omega}^{(n)}(0)=c_{\omega}.$
    Then, $\mathcal{T}$ is eventually expanding on average. 
\end{proposition}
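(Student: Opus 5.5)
Since the statement places the random fixed point $c_\omega$ in $D$, the natural move is to conjugate the cocycle so that the fixed point sits at the origin, and then apply Tischler's bound (Theorem \ref{tischler}) fibrewise. For each $\omega$ let
$$\phi_\omega(z)=\frac{z-c_\omega}{1-\bar c_\omega z},$$
which is measurable in $\omega$. Set $S_\omega:=\phi_{\sigma\omega}\circ T_\omega\circ\phi_\omega^{-1}$. By Proposition \ref{no_circle_pt} (or directly from the pullback limit) we have $c_{\sigma\omega}=T_\omega(c_\omega)$, so $S_\omega$ is a Blaschke product of degree $d_\omega$ with $S_\omega(0)=0$. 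The Möbius maps $\phi_\omega$ are diffeomorphisms of $\mathbb{T}$ with
$$\frac{1-|c_\omega|}{1+|c_\omega|}\le|\phi_\omega'|\le\frac{1+|c_\omega|}{1-|c_\omega|}.$$
Hence
$$\log\inf_{\mathbb{T}}|(T^{(n)}_\omega)'|\ge\log\inf_{\mathbb{T}}|(S^{(n)}_\omega)'|-g(\omega)-g(\sigma^n\omega),$$
where $g(\omega):=\log\frac{1+|c_\omega|}{1-|c_\omega|}$ is finite almost everywhere.

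\textbf{Step 1: expansion of the conjugated cocycle.} Whenever $d_\omega\ge2$ and $S_\omega(0)=0$, the Schwarz lemma gives $|S_\omega(z)|<|z|$ strictly on $D$. Fix $R=1/2$ and write $r_\omega:=\max_{|z|=R}|S_\omega(z)|<R$. Tischler's theorem then yields
$$\inf_{\mathbb{T}}|S_\omega'|\ge\frac{\log r_\omega}{\log R}>1.$$
When $d_\omega=1$, $S_\omega$ is a rotation and $|S_\omega'|\equiv1$. Since $\mathbb{P}(\deg T_\omega\ge2)>0$, the measurable function $h(\omega):=\log\inf_{\mathbb{T}}|S'_\omega|$ is nonnegative and strictly positive on a set of positive measure, so $\int h\,d\mathbb{P}>0$ (possibly $+\infty$, which only helps; truncate $h$ if needed). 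The chain rule gives
$$\log\inf_{\mathbb{T}}|(S^{(n)}_\omega)'|\ge\sum_{k<n}h(\sigma^k\omega).$$

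\textbf{Step 2: passing back to $\mathcal{T}$.} This is where I expect the main obstacle to lie, because $g$ need not be integrable, so one cannot simply integrate the inequality above. My plan is to argue through subadditivity. Let $F_n(\omega):=\log\inf_{\mathbb{T}}|(T^{(n)}_\omega)'|$. This sequence is superadditive, and by admissibility condition (4) $F_1$ has integrable negative part, so Kingman's theorem gives
$$\frac1n F_n\to\lambda:=\lim_n\frac1n\int F_n\,d\mathbb{P}\quad\text{almost surely.}$$
For each fixed $\omega$, the term $g(\omega)/n\to0$. For the other term, since $g$ is finite almost everywhere and $\sigma$ preserves $\mathbb{P}$, $g(\sigma^n\omega)/n\to0$ in probability. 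Combining this with Birkhoff for the truncated $h$, we get
$$\frac1nF_n\ge\frac1n\sum_{k<n}h(\sigma^k\omega)-o(1)$$
in probability. The almost sure limit $\lambda$ must agree with the limit in probability, so $\lambda\ge\int\min(h,1)\,d\mathbb{P}>0$. (If $\lambda=+\infty$, one needs to check that this is excluded or harmless, but condition (2) bounds $F_1$ above by $\log d_\omega$ plus an integrable term and so gives finiteness.) Therefore $\mathcal{T}$ is eventually EOA.

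A few details remain to verify. First, the measurability of $r_\omega$ and $h$: $h$ is a supremum over a countable dense set, so this is routine. Second, Tischler's theorem applies to $S_\omega$: we need $r_\omega<R$, which is exactly the strict Schwarz inequality on the compact circle $|z|=R$.
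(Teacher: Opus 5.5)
Your argument is correct, and its skeleton matches the paper's proof. Both conjugate by the Möbius maps sending $c_\omega$ to $0$ (using $c_{\sigma\omega}=T_\omega(c_\omega)$), get fibrewise expansion from Schwarz plus Theorem \ref{tischler}, pass back with the Möbius derivative bounds on $\mathbb{T}$, and identify the limit with Kingman's theorem. You differ in the two technical steps.

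\emph{Expansion step.} The paper restricts to the positive-measure set $\tilde{\Omega}_{\varepsilon}\cap\Omega_{\delta}$, where $\deg T_\omega\ge 2$, the zeros of $T_\omega$ lie in $D_{1-\delta}$, and $|c_\omega|\le 1-\varepsilon$. It then uses Proposition \ref{zero_bound_rfp} and Corollary \ref{Blaschke_bound} to get a \emph{uniform} constant $\log r/\log R>1$ on that set. You use only the pointwise strict Schwarz inequality, and truncating to $\min(h,1)$ already gives a positive Birkhoff average. So the appendix zero-location estimates are not needed at all.

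\emph{Endpoint step.} For the non-integrable term $g(\sigma^n\omega)/n$, the paper evaluates along the hitting times $l_k$ to $\tilde{\Omega}_{\varepsilon}\cap\Omega_{\delta}$, where $|c_{\sigma^{l_k}\omega}|\le 1-\varepsilon$. It then invokes the a.s.\ existence of the Kingman limit. Your convergence-in-probability argument is an equally valid way to use the fact that $g$ is merely a.e.\ finite.

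\emph{Comparison.} The paper's route gives the explicit bound $\mathbb{P}(\tilde{\Omega}_{\varepsilon}\cap\Omega_{\delta})\log\frac{\log r}{\log R}$ in terms of zero locations. Yours is shorter, and letting the truncation level tend to infinity it even gives $\lambda\ge\int h\,d\mathbb{P}$.

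\emph{One small correction.} Condition (2) does not force $\lambda<\infty$. For instance, $T_\omega(z)=z^{d_\omega}$ with $\int\log d_\omega\,d\mathbb{P}=\infty$ is admissible and has $c_\omega=0$. As you say, though, $\lambda=+\infty$ is harmless for the conclusion.
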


\begin{proof}  
    Let $\phi_{a}(z):=\frac{z+a}{1+\bar{a}z}.$ Let $$\tilde{T}_{\omega}:=\phi_{c_{\sigma\omega}}^{-1}\circ T_{\omega}\circ\phi_{c_{\omega}}$$ so that $\tilde{T}_{\omega}(0)=0$ and $\tilde{T}$ is measurable. Since $c_{\sigma\omega}=T_{\omega}(c_{\omega}),$ for $\mathbb{P}$-a.e. $\omega\in\Omega$ and all $n\in\mathbb{N},$ $$\tilde{T}_{\omega}^{(n)}=\phi_{c_{\sigma^{n}\omega}}^{-1}\circ T_{\omega}^{(n)}\circ\phi_{c_{\omega}}.$$ Since each map $\tilde{T}_{\omega}$ fixes the origin, for every $R<1$ and all $\omega\in\Omega,$ we have from Corollary \ref{tischler} (or Schwarz's lemma) that $\sup_{|z|=R}|\tilde{T}_{\omega}(z)|\le R.$
    Let $$\Omega_{\delta}:=\{\omega\in\Omega:\text{the zeros of }T_\omega\text{ are contained in }D_{1-\delta}\text{ and }\deg(T_{\omega})\ge2\}$$ and $$\tilde{\Omega}_{\varepsilon}:=\{\omega\in\Omega:|c_{\omega}|\le1-\varepsilon\}.$$ For sufficiently small $\delta>0,$ $\mathbb{P}(\Omega_{\delta})>0.$ By the measurability of $c_{\omega}$ and the fact that $\mathbb{P}(\omega\in\Omega:c_{\omega}\in D)=1$, there exists $\varepsilon:=\varepsilon_{\delta}>0$ such that $\mathbb{P}(\tilde{\Omega}_{\varepsilon})>1-\mathbb{P}(\Omega_{\delta}).$ It thus follows that $\mathbb{P}(\tilde{\Omega}_{\varepsilon}\cap\Omega_{\delta})>0.$
    
     Take $\omega\in\tilde{\Omega}\cap\Omega_{\delta}.$  From Proposition \ref{zero_bound_rfp}, we know that if $|c_{\omega}|<1-\varepsilon$ then there is $\tilde{R}:=\tilde{R}_{\varepsilon, \delta}<1$ such that all the zeros of $\tilde{T}_{\omega}=\phi_{c_{\sigma\omega}}^{-1}\circ T_{\omega}\circ\phi_{c_{\omega}}$ are contained in $D_{\tilde{R}}.$ Let $r:=R\left(\frac{R+\tilde{R}}{1+R\tilde{R}}\right).$ From Corollary \ref{Blaschke_bound}, we have that for every $R<1,$ $$\sup_{|z|=R}|\tilde{T}_{\omega}(z)|\le\begin{cases}
        R\left(\frac{R+\tilde{R}}{1+R\tilde{R}}\right)^{\deg(T_{\omega})-1}\le r<R,&\text{if }\omega\in\tilde{\Omega}_{\varepsilon}\cap\Omega_{\delta}\\R,&\text{if }\omega\not\in\tilde{\Omega}_{\varepsilon}\cap\Omega_{\delta}.
    \end{cases}$$ It follows from Theorem \ref{tischler} that $$\inf_{z\in\mathbb{T}}|\tilde{T}_{\omega}'(z)|\ge\begin{cases}
        \frac{\log r}{\log R}>1, & \text{if }\omega\in \tilde{\Omega}_{\varepsilon}\cap\Omega_{\delta}\\1, & \text{if }\omega\not\in \tilde{\Omega}_{\varepsilon}\cap\Omega_{\delta}.
    \end{cases}$$ 
    We thus have \begin{align*}
        \inf_{z\in\mathbb{T}}|(\tilde{T}_{\omega}^{(n)})'(z)|\ge\left(\frac{\log r}{\log R}\right)^{m_{n,\omega}},
    \end{align*} where $m_{n,\omega}=\sum_{k=1}^{n}\textbf{1}_{\tilde{\Omega}_{\varepsilon}\cap\Omega_{\delta}}(\sigma^{k}\omega).$ For $\mathbb{P}$-a.e. $\omega\in\Omega,$ it follows from Birkhoff ergodic theorem that $$\lim_{n\to\infty}\frac{m_{n,\omega}}{n}=\lim_{n\to\infty}\frac{1}{n}\sum_{k=1}^{n}\textbf{1}_{\tilde{\Omega}_{\varepsilon}\cap\Omega_{\delta}}(\sigma^{k}\omega)=\mathbb{P}(\tilde{\Omega}_{\varepsilon}\cap\Omega_{\delta})>0.$$

      Since, for $\mathbb{P}$-a.e. $\omega\in\Omega$ and all $n\in\mathbb{N},$ $T_{\omega}^{(n)}=\phi_{c_{\sigma^{n}\omega}}\circ \tilde{T}_{\omega}^{(n)}\circ\phi_{c_{\omega}}^{-1}$, we have \begin{align*}
        \frac{1}{n}\log\inf_{z\in\mathbb{T}}|(T_{\omega}^{(n)})'(z)|&\ge\frac{1}{n}\log\inf_{z\in\mathbb{T}}|\phi_{-c_{\omega}}'(z)|+\frac{1}{n}\log\inf_{z\in\mathbb{T}}|\phi_{c_{\sigma^{n}\omega}}'(z)|\\&+\frac{1}{n}\log\inf_{z\in\mathbb{T}}|(\tilde{T}_{\omega}^{(n)})'(z)|\\&\ge\frac{1}{n}\log\inf_{z\in\mathbb{T}}|\phi_{-c_{\omega}}'(z)|+\frac{1}{n}\log\inf_{z\in\mathbb{T}}|\phi_{(c_{\sigma^{n}\omega})}'(z)|\\&+\frac{m_{n,\omega}}{n}\log\left(\frac{\log r}{\log R}\right).
    \end{align*} By assumption, $\omega\in\tilde{\Omega}_{\varepsilon}$ and so, from Proposition \ref{derivative_bounds}, $$\frac{\varepsilon}{2-\varepsilon}|\le|\phi_{-c_{\omega}}'(z)|\le\frac{2-\varepsilon}{\varepsilon}$$ for all $z\in\mathbb{T}.$ Thus, the first term on the right hand side converges to zero as $n\to\infty$. Take the subsequence $l_k:=l_k(\omega)$ of hitting times to $\tilde{\Omega}_{\varepsilon}\cap\Omega_{\delta},$ that is, $\sigma^{l_{k}}\omega\in\tilde{\Omega}_{\varepsilon}\cap\Omega_{\delta}$ for all $k\in\mathbb{N}$ and $\sigma^{n}\omega\not\in \tilde{\Omega}_{\varepsilon}\cap\Omega_{\delta}$ for $l_{k}+1\le n\le l_{k+1}-1, \forall k\in\mathbb{N}.$ For $\mathbb{P}$-a.e. $\omega\in\Omega,$ $l_k\to\infty$ as $k\to\infty.$ From the fact that $T_{\omega}^{(n)}(c_{\omega})=c_{\sigma^{n}\omega}$ and $\sigma^{l_k}\omega\in\tilde{\Omega}_{\varepsilon},$  it follows that the second term converges to zero along this subsequence. Thus,  \begin{align*}\lim_{k\to\infty}\frac{1}{l_k}\log\inf_{z\in\mathbb{T}}|(T_{\omega}^{(l_k)})'(z)|&\ge\lim_{k\to\infty}\frac{m_{l_k,\omega}}{l_k}\log\left(\frac{\log r}{\log R}\right)\\&=\mathbb{P}(\tilde{\Omega}_{\varepsilon}\cap\Omega_{\delta})\log\left(\frac{\log r}{\log R}\right).\end{align*} 
    From Kingman's subadditive ergodic theorem, $\lim_{n\to\infty}\frac{1}{n}\log\inf_{z\in\mathbb{T}}|(T_{\omega}^{(n)})'(z)|$ exists almost surely and thus $\lim_{k\to\infty}\frac{1}{l_k}\log\inf_{z\in\mathbb{T}}|(T_{\omega}^{(l_k)})'(z)|$ is equal to the limit of the original sequence. Thus \begin{align*}
         \lim_{n\to\infty}\frac{1}{n}\log\inf_{z\in\mathbb{T}}|(T_{\omega}^{(n)})'(z)|\ge\mathbb{P}(\tilde{\Omega}_{\varepsilon}\cap\Omega_{\delta})\log\left(\frac{\log r}{\log R}\right)>0.
    \end{align*}

    By Kingman's subadditive ergodic theorem, we have \begin{align*}\lim_{n\to\infty}\frac{1}{n}\int_{\Omega}\log\inf_{z\in\mathbb{T}}|(T_{\omega}^{(n)})'(z)|\,d\mathbb{P}(\omega)>0\end{align*} and thus $\mathcal{T}$ is eventually EOA. 
    
\end{proof}

\begin{remark}
    Proposition  \ref{N_admissible_rfp} and \ref{rfp_oi_eoa} imply that admissible $\mathcal{T}$ has a random attracting fixed point in $D$ if and only if $\mathcal{T}$ is eventually EOA.
\end{remark}

\begin{remark}
    Proposition \ref{no_circle_pt} and \ref{rfp_oi_eoa} tell us that if admissible $\mathcal{T}$ is not eventually EOA, then $\mathbb{P}(\Omega_{\mathbb{T}})=1.$
\end{remark}

\subsection{Proof of Theorem \ref{possible_limits}}\label{section_proof_main_thm}

In this section, we give several results needed for the proof of Theorem \ref{possible_limits}, followed by the proof itself.

The following theorem is a particular case of a result of Gouëzel and Karlsson \cite[Corollary 5.2]{subadditive_MET} where all maps map the unit disc to itself. It ensures the almost sure convergence of the cocycle to a boundary attracting random fixed point if almost every orbit tends to the boundary sufficiently quickly.

\begin{theorem}[Corollary 5.2 \cite{subadditive_MET}]\label{contract_disc_conv}
    Let $d_{D}$ denote the hyperbolic distance on $D,$ given by $d_{D}(z,w)=\log\left(\frac{1+\left|\frac{z-w}{1-\bar{w}z}\right|}{1-\left|\frac{z-w}{1-\bar{w}z}\right|}\right).$ Let $(\mathcal{T}, \sigma, \mathbb{P})$ be an ergodic cocycle of holomorphic maps of $D$ such that $\int_{\Omega}d_{D}(T_{\omega}(0),0)\,d\mathbb{P}(\omega)<\infty.$ Then, unless for $\mathbb{P}$-a.e. $\omega$ $$\lim_{n\to\infty}\frac{1}{n}d_{D}(T_{\sigma^{-n}\omega}^{(n)}(z),z)=0,$$ it holds that for $\mathbb{P}$-a.e. $\omega\in\Omega,$ the orbit $T_{\sigma^{-n}\omega}^{(n)}(z)$ converges for all $z\in D$ to some boundary point $\xi_{\omega}\in\mathbb{T}$ independent of $z\in D.$
\end{theorem}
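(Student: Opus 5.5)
The plan is to reduce everything to a subadditive ergodic argument for the displacement of the pullback orbit of $0$, and then use the Gromov hyperbolicity of $(D,d_D)$ to turn linear escape into convergence to a single boundary point. This avoids the full horofunction machinery of \cite{subadditive_MET}. Throughout, write $x_n(\omega):=T_{\sigma^{-n}\omega}^{(n)}(0)$ and
$$a(n,\omega):=d_D\bigl(0,x_n(\omega)\bigr).$$

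\textbf{Step 1: subadditivity and the escape rate.} Since $T_{\sigma^{-(n+m)}\omega}^{(n+m)}=T_{\sigma^{-m}\omega}^{(m)}\circ T_{\sigma^{-(n+m)}\omega}^{(n)}$, the triangle inequality together with Theorem \ref{scwarz_pick} (every $T_\omega$ is a semicontraction for $d_D$) gives
$$a(n+m,\omega)\le a(m,\omega)+d_D\bigl(T_{\sigma^{-(n+m)}\omega}^{(n)}(0),0\bigr)=a(m,\omega)+a(n,\sigma^{-m}\omega).$$
So $a$ is a subadditive cocycle over the ergodic map $\sigma^{-1}$. It is integrable because $a(1,\cdot)=d_D(T_{\cdot}(0),0)$ is integrable by hypothesis. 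Kingman's theorem then yields a constant $A\ge0$ with $a(n,\omega)/n\to A$ for $\mathbb{P}$-a.e. $\omega$. For any $z\in D$, semicontraction gives $d_D(T^{(n)}_{\sigma^{-n}\omega}(z),x_n)\le d_D(z,0)$, so the same limit $A$ governs $d_D(T^{(n)}_{\sigma^{-n}\omega}(z),z)/n$. Hence $A=0$ is exactly the exceptional alternative in the statement, and from now on I assume $A>0$.

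\textbf{Step 2: consecutive steps are sublinear.} We have $x_{n+1}(\omega)=T^{(n)}_{\sigma^{-n}\omega}\bigl(T_{\sigma^{-n-1}\omega}(0)\bigr)$, so semicontraction gives
$$d_D(x_n,x_{n+1})\le d_D\bigl(0,T_{\sigma^{-n-1}\omega}(0)\bigr)=:b_n(\omega).$$
The $b_n$ are identically distributed and integrable, so a standard Borel--Cantelli (or Birkhoff) argument gives $b_n/n\to0$ almost surely.

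\textbf{Step 3: convergence to a boundary point, the main obstacle.} Use the Gromov product based at $0$. In a $\delta$-hyperbolic space,
$$(x_n|x_{n+1})_0\ge \min\bigl(d_D(0,x_n),d_D(0,x_{n+1})\bigr)-d_D(x_n,x_{n+1})\ge An-o(n).$$
Fix a visual metric $\varrho$ on $D\cup\mathbb{T}$ with $\varrho(x,y)\asymp e^{-\epsilon(x|y)_0}$ for small $\epsilon>0$; on the disc this is comparable to the Euclidean metric near $\mathbb{T}$. Then $\sum_n\varrho(x_n,x_{n+1})\lesssim\sum_n e^{-\epsilon(A/2)n}<\infty$. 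Hence $(x_n)$ is $\varrho$-Cauchy and converges to some $\xi_\omega$, which lies on $\mathbb{T}$ because $d_D(0,x_n)\to\infty$. Equivalently, the Euclidean limit of $x_n$ is $\xi_\omega\in\mathbb{T}$.

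The delicate point is justifying the comparison $\varrho(x_n,x_{n+1})\lesssim e^{-\epsilon(x_n|x_{n+1})_0}$ uniformly, up to the usual $\delta$-losses. For the disc this can also be done by hand. If $|x|,|y|\to1$ with $d_D(x,y)$ small relative to $d_D(0,x)$, then $|x-y|\lesssim (1-|x|)\,e^{d_D(x,y)}$. This yields a summable bound $e^{-An+o(n)}$ directly, and I would use this explicit estimate if the abstract route gets fussy. Measurability of $\xi_\omega$ is automatic, since it is a pointwise limit of measurable maps.

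\textbf{Step 4: independence of $z$.} For $z\in D$, $T^{(n)}_{\sigma^{-n}\omega}(z)$ stays within hyperbolic distance $d_D(z,0)$ of $x_n$, while $x_n\to\mathbb{T}$. Two sequences at bounded hyperbolic distance whose Euclidean moduli tend to $1$ have Euclidean distance tending to $0$, by the same estimate as in Step 3. So $T^{(n)}_{\sigma^{-n}\omega}(z)\to\xi_\omega$ for every $z\in D$.
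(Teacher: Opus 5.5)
Your proof is correct, but it takes a different and more elementary route than the one behind the statement. The paper gives no proof of its own; it imports the result from Gou\"ezel--Karlsson. There it is a corollary of their general ergodic theorem for cocycles of semicontractions, which produces a horofunction along which the orbit escapes linearly, specialized to the disc.

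You avoid that machinery entirely:
\begin{itemize}
\item Kingman's theorem over $\sigma^{-1}$ gives $d_D(0,x_n)=An+o(n)$.
\item The pullback (inner) order of composition gives $d_D(x_n,x_{n+1})\le d_D(0,T_{\sigma^{-n-1}\omega}(0))=o(n)$ by Borel--Cantelli. This is the one place where the pullback structure is essential; the bound has no analogue for forward orbits $T^{(n)}_{\omega}(0)$.
\item ``Linear escape plus sublinear increments'' then forces convergence to a boundary point. This is the classical mechanism in Gromov hyperbolic spaces.
\end{itemize}
Your route is short and self-contained, and it also gives a rate for free: $|x_n-\xi_\omega|\le C_\omega e^{-(A-\epsilon)n}$ for every $\epsilon>0$. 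The cited route is much more general, covering arbitrary metric spaces acted on by semicontractions, and it gives extra information, namely a horofunction describing the direction of escape.

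One cleanup. Drop the visual-metric paragraph, which is loose: for $\epsilon<1$ the visual metric on $\mathbb{T}$ is comparable to $|\xi-\eta|^{\epsilon}$, not $|\xi-\eta|$, and $e^{-\epsilon(x|y)_0}$ does not vanish on the diagonal of $D$. Instead, lead with your explicit estimate, which in fact holds for all $x,y\in D$ with no smallness assumption.
\begin{itemize}
\item Write $\rho=|x-y|/|1-\bar{x}y|$ and $1-\bar{x}y=(1-|x|^2)+\bar{x}(x-y)$. Then $(1-\rho)|x-y|\le\rho(1-|x|^2)$, hence $|x-y|\le 2(1-|x|)e^{d_D(x,y)}$.
\item Combined with $1-|x|\le 2e^{-d_D(0,x)}$, this gives $|x_{n+1}-x_n|\le 4e^{-d_D(0,x_n)+d_D(x_n,x_{n+1})}=4e^{-An+o(n)}$. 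That is summable and finishes Step 3.
\item Together with $d_D(T^{(n)}_{\sigma^{-n}\omega}(z),x_n)\le d_D(z,0)$, the same inequality finishes Step 4.
\end{itemize}
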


The following theorem is a particular case of a result of Benini, Evdoridou, Fagella, Rippon, and Stallard \cite[Theorem 4.1]{benini_1}, where every map is a finite Blaschke product, and so maps the unit disc to itself.
\begin{theorem}[Theorem 4.1 \cite{benini_1}]\label{holomorphic_sequence}
    Let $T_{n}:D\to D,$ $n\in\mathbb{N},$ be a sequence of Blaschke products. Suppose there exists $z_{0}\in D$ such that $$\sum_{n=0}^{\infty}(1-|T_{n}(z_{0})|)<\infty.$$ Then, for a.e. $z\in \mathbb{T},$ $$\lim_{n\to\infty}|T_{n}(z)-T_{n}(z_{0})|=0.$$
\end{theorem}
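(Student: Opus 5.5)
The plan is to compare, for each fixed $n$, the boundary values $T_n(z)$, $z\in\mathbb{T}$, with the interior value $w_n:=T_n(z_0)$ in an averaged $L^2$ sense. The averaging is against harmonic measure at $z_0$, that is, $P_{z_0}\,d\text{Leb}$, which is mutually absolutely continuous with $\text{Leb}$. If the averaged square distances are summable in $n$, a Borel--Cantelli/monotone convergence argument gives pointwise convergence almost everywhere.

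\emph{Step 1 (Löwner-type invariance).} For a finite Blaschke product $T$ with $T(z_0)=w$, I will show that
\[
T_{*}\left(P_{z_0}\,d\text{Leb}\right)=P_{w}\,d\text{Leb}\quad\text{on }\mathbb{T}.
\]
\begin{itemize}
\item Since $T$ maps $\mathbb{T}$ to $\mathbb{T}$ and is continuous on $\overline{D}$, it suffices to compare Fourier coefficients.
\item For $k\ge0$, the function $T^k$ is holomorphic and bounded on $D$, so the Poisson formula gives $\int_{\mathbb{T}}T(\zeta)^kP_{z_0}(\zeta)\,d\text{Leb}(\zeta)=T(z_0)^k=w^k$.
\item For negative $k$, use $\overline{T(\zeta)}^{\,|k|}=T(\zeta)^{-|k|}$ on $\mathbb{T}$ and take complex conjugates, which gives $\bar w^{|k|}$.
\item These are exactly the Fourier coefficients of $P_w\,d\text{Leb}$, so the two measures agree.
\end{itemize}

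\emph{Step 2 (second moment computation).} For $w\in D$ and $\zeta\in\mathbb{T}$ we have $|\zeta-w|^2P_w(\zeta)=1-|w|^2$. Applying Step 1 to $T=T_n$ then gives
\[
\int_{\mathbb{T}}|T_n(\zeta)-w_n|^2P_{z_0}(\zeta)\,d\text{Leb}(\zeta)=\int_{\mathbb{T}}|\xi-w_n|^2P_{w_n}(\xi)\,d\text{Leb}(\xi)=1-|w_n|^2\le 2(1-|w_n|).
\]

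\emph{Step 3 (summation).} By monotone convergence and the hypothesis,
\[
\int_{\mathbb{T}}\sum_{n}|T_n(\zeta)-w_n|^2P_{z_0}(\zeta)\,d\text{Leb}(\zeta)\le 2\sum_n(1-|T_n(z_0)|)<\infty.
\]
Hence the series $\sum_n|T_n(\zeta)-T_n(z_0)|^2$ converges for $P_{z_0}\,d\text{Leb}$-a.e. $\zeta$, and in particular its terms tend to $0$. Since
\[
P_{z_0}\ge\frac{1-|z_0|}{1+|z_0|}>0
\]
on $\mathbb{T}$, this null set is also $\text{Leb}$-null, which proves the claim.

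The only real point needing care is Step 1. For finite Blaschke products it is elementary, because $T$ is continuous up to the boundary. No use of radial limits or of the full inner-function theory is required, so I do not expect a genuine obstacle there.
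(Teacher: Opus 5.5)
Your proof is correct, but it takes a genuinely different route from the paper. The paper gives no argument for this statement. It imports it from Benini, Evdoridou, Fagella, Rippon and Stallard, where it is proved for general holomorphic maps, and only the finite Blaschke case is ever used (with $T_n=T^{(n)}_{\sigma^{-n}\omega}$, and later $T^{(n)}_{\omega}$). Your second-moment argument is a self-contained replacement.

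Step~1 is L\"owner's lemma, which is exactly the paper's Theorem \ref{poisson_kernel}, but you need much less of it. Expanding the square gives
\[
\int_{\mathbb{T}}|T-w|^2P_{z_0}\,d\text{Leb}=\int_{\mathbb{T}}|T|^2P_{z_0}\,d\text{Leb}-2\,\text{Re}\Bigl(\bar w\int_{\mathbb{T}}T\,P_{z_0}\,d\text{Leb}\Bigr)+|w|^2=1-|w|^2,
\]
which uses only $|T|=1$ on $\mathbb{T}$ and the Poisson formula for $T$ itself.

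Compared with the black-box citation, your route buys two things:
\begin{itemize}
\item \emph{A quantitative strengthening.} You get $\int_{\mathbb{T}}\sum_n|T_n-T_n(z_0)|^2P_{z_0}\,d\text{Leb}\le 2\sum_n(1-|T_n(z_0)|)$. Hence $\sum_n|T_n(\zeta)-T_n(z_0)|^2<\infty$ for a.e.\ $\zeta$, not merely convergence of the terms to $0$.
\item \emph{The general disc case at no extra cost.} Take any holomorphic $f:D\to D$ with radial limits $f^*$. Then $|f^*|\le 1$ a.e.\ and $f(z_0)=\int_{\mathbb{T}}f^*P_{z_0}\,d\text{Leb}$. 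The same expansion gives $\int_{\mathbb{T}}|f^*-f(z_0)|^2P_{z_0}\,d\text{Leb}\le 1-|f(z_0)|^2$, and your Steps~2--3 go through unchanged. Continuity up to the boundary is never needed, so your argument recovers the cited theorem for arbitrary holomorphic self-maps of $D$, not just Blaschke products.
\end{itemize}

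The citation, for its part, saves space. It also places the result within the broader framework of non-autonomous maps between simply connected domains treated in that work.
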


The following definition can be seen as the random analogue to the deterministic physical measure introduced by Eckmann and Ruelle \cite{eckman_ruelle}. 
\begin{definition}[(Random) physical measure]\label{physical_measure_defn}
    We will call a $\mathcal{T}^{*}$-invariant measure $\mu$ a physical measure if for $\mathbb{P}$-a.e. $\omega\in\Omega,$ there exists a set of $z\in\mathbb{T}$ with positive Lebesgue measure such that $$\lim_{n\to\infty}\frac{1}{n}\sum_{k=0}^{n-1}\delta_{T_{\sigma^{-k}\omega}^{(k)}(z)}=\mu_{\omega},$$ or equivalently, for all $f\in C(\mathbb{T}),$ $$\lim_{n\to\infty}\frac{1}{n}\sum_{k=0}^{n-1}f(T_{\sigma^{-k}\omega}^{(k)}(z))=\int_{\mathbb{T}}f\,d\mu_{\omega}.$$ 
\end{definition}
\begin{remark}
    Alternatively, some works refer to a measure as physical if it is the limit of the pushforward of the Lebesgue measure, or of any measure which is absolutely continuous with respect to Lebesgue, for example, Blumenthal and Young in \cite{blumenthal_young}. Our results also hold for this alternate definition of physical measure.
\end{remark}

We will now prove Theorem \ref{possible_limits}.
\begin{proof}[Proof of Theorem \ref{possible_limits}]

    \begin{description}
    \item[(1)]\label{case1} $\mathcal{T}$ is eventually EOA.
    The claim follows from Theorem \ref{buzzi_blaschke}, Theorem \ref{rfp_oi_eoa} and Lemma \ref{N_admissible_rfp}. 
    \item[(2)]$\mathcal{T}$ is not eventually EOA. In this case, $\mathbb{P}(\Omega_{\mathbb{T}})=1$ and $\mathbb{P}$-a.e. $\omega\in\Omega,$ $(T_{\sigma^{-n}\omega}^{(n)}(0))_{n=1}^{\infty}$ has a limit point on $\mathbb{T}.$ Thus, for $\mathbb{P}$-a.e. $\omega\in\Omega,$ there exists $x:=x(\omega)\in\mathbb{T}$ and a subsequence $n_k$ such that $\lim_{k\to\infty}T_{\sigma^{-n-k}\omega}^{(n_k)}(0)=x(\omega).$ By the maximum modulus principle, $\lim_{k\to\infty}T_{\sigma^{-n-k}\omega}^{(n_k)}(z)=x(\omega)$ for all $z\in D.$  
    
    Recall that each Blaschke product is a semicontraction of the hyperbolic metric, that is, $d_{D}(T_{\omega}(y),T_{\omega}(z))\le d_{D}(y,z)$ for all $\omega\in\Omega$ and $y,z\in D.$ Thus, \begin{align*}
        d_{D}(T_{\sigma^{-(n+m)}\omega}^{(n+m)}(0),0)&\le d_{D}(T_{\sigma^{-(n+m)}\omega}^{(n+m)}(0),T_{\sigma^{-m}\omega}^{(m)}(0))\\&+d_{D}(T_{\sigma^{-m}\omega}^{(m)}(0),0)\\&\le d_{D}(T_{\sigma^{-(n+m)}\omega}^{(n)}(0),0)+d_{D}(T_{\sigma^{-m}\omega}^{(m)}(0),0),
    \end{align*} and $f(n,\omega):=d_{D}(T_{\sigma^{-n}\omega}^{(n)}(0),0)$ is a subadditive function. From Kingman's subadditive ergodic theorem, it follows that $$A(\omega):=\lim_{n\to\infty}\frac{1}{n}\log\left(\frac{1+|T_{\sigma^{-n}\omega}^{(n)}(0)|}{1-|T_{\sigma^{-n}\omega}^{(n)}(0)|}\right)=\lim_{n\to\infty}\frac{1}{n}d_{D}(T_{\sigma^{-n}\omega}^{(n)}(0),0)$$ exists and  $A(\omega)$ is equal to constant $A\ge0$ for $\mathbb{P}$-a.e. $\omega\in\Omega.$ Suppose $A>0.$ It follows from Theorem \ref{contract_disc_conv}, taking $F_{n}(\omega):=T_{\sigma^{-n}\omega}^{(n)},$ that there exists $x:\Omega\to\mathbb{T}$ such that for all $z\in D,$ $$\lim_{n\to\infty}T_{\sigma^{-n}\omega}^{(n)}(z)=x_{\omega}\in\mathbb{T}.$$ Since $x_{\omega}$ is the limit of measurable functions, it follows that $x_{\omega}$ is measurable.
    
    It remains to show the convergence holds for $\text{Leb}$-a.e. $z\in\mathbb{T}.$ We have \begin{align*}
    d_{D}(T_{\sigma^{-n}\omega}^{(n)}(0),0)&=\log\left(\frac{1+|T_{\sigma^{-n}\omega}^{(n)}(0)|}{1-|T_{\sigma^{-n}\omega}^{(n)}(0)|}\right)\\&=\log(1+|T_{\sigma^{-n}\omega}^{(n)}(0)|)-\log(1-|T_{\sigma^{-n}\omega}^{(n)}(0)|).
\end{align*} Thus, \begin{align*}
    A&=\lim_{n\to\infty}\frac{1}{n}d_{D}(T_{\sigma^{-n}\omega}^{(n)}(0),0)\\&=\lim_{n\to\infty}\frac{1}{n}\left(\log(1+|T_{\sigma^{-n}\omega}^{(n)}(0)|)-\log(1-|T_{\sigma^{-n}\omega}^{(n)}(0)|)\right)\\&=-\lim_{n\to\infty}\frac{1}{n}\log(1-|T_{\sigma^{-n}\omega}^{(n)}(0)|),
\end{align*} since $0\le\log(1+|T_{\sigma^{-n}\omega}^{(n)}(0)|)\le\log2.$ Thus, for $\mathbb{P}$-a.e. $\omega\in\Omega,$ $$\lim_{n\to\infty}\frac{1}{n}\log(1-|T_{\sigma^{-n}\omega}^{(n)}(0)|)=-A<0,$$ so $$\lim_{n\to\infty}(1-|T_{\sigma^{-n}\omega}^{(n)}(0)|)^{1/n}=e^{-A}<1.$$ Fix some $r>0$ such that $e^{-A}<r<1.$ For $\mathbb{P}$-a.e. $\omega\in\Omega$ there exists $N:=N(\omega)\in\mathbb{N}$ such that for all $n\ge N$ we have $(1-|T_{\sigma^{-n}\omega}^{(n)}(0)|)^{1/n}\le r$ so $1-|T_{\sigma^{-n}\omega}^{(n)}(0)|\le r^n.$ Thus, we may write \begin{align*}
    \sum_{n=1}^{\infty}(1-|T_{\sigma^{-n}\omega}^{(n)}(0)|)&=\sum_{n=1}^{N}(1-|T_{\sigma^{-n}\omega}^{(n)}(0)|)\\&+\sum_{n=N+1}^{\infty}(1-|T_{\sigma^{-n}\omega}^{(n)}(0)|)\\&\le\sum_{n=1}^{N+1}(1-|T_{\sigma^{-n}\omega}^{(n)}(0)|)+\sum_{n=N+1}^{\infty}r^n.
\end{align*} Clearly, $\sum_{n=1}^{\infty}(1-|T_{\sigma^{-n}\omega}^{(n)}(0)|)<\infty.$  Since $T_{\omega}$ maps $D$ to itself for all $\omega\in\Omega,$ taking $F_{n}:=T_{\sigma^{-n}\omega}^{(n)}$ and $z_{0}=0,$ it follows from Theorem \ref{holomorphic_sequence} that $$\lim_{n\to\infty}|T_{\sigma^{-n}\omega}^{(n)}(z)-T_{\sigma^{-n}\omega}^{(n)}(0)|=0$$ for $\text{Leb}$-a.e. $z\in\mathbb{T}.$ Moreover, it follows that, for all continuous $f:\mathbb{T}\to\mathbb{C},$ $\text{Leb}$-a.e. $z\in\mathbb{T}$ and $\mathbb{P}$-a.e. $\omega\in\Omega,$ $\lim_{n\to\infty}f(T_{\sigma^{-n}\omega}^{(n)}(z))=f(x_{\omega})$ and so $\lim_{n\to\infty}\frac{1}{n}\sum_{k=0}^{n-1}f(T_{\sigma^{-k}\omega}^{(k)}(z))=f(x_{\omega}).$ Thus, for $\text{Leb}$-a.e. $z\in\mathbb{T}$ and $\mathbb{P}$-a.e. $\omega\in\Omega,$ $$\lim_{n\to\infty}\frac{1}{n}\sum_{k=0}^{n-1}\delta_{T_{\sigma^{-k}\omega}^{(k)}(z)}=\delta_{x_{\omega}}.$$ Furthermore, since the physical measure has a basin of full measure, it is the only physical measure for the cocycle $\mathcal{T}^{*}$. 
    \end{description}
\end{proof}

\subsection{Consequences and refinements}
In this section, we present several results related to Theorem \ref{possible_limits}.

\subsubsection{Further information on asymptotic behavior}

In this section, we explore further the limiting behavior of the cocycle in the interior, on the boundary, and in the exterior of the unit disc.

\begin{remark}
    Analogous results hold for the limiting behavior of $T_{\sigma^{-n}\omega}^{(n)}(z)$ for  $z\in\hat{\mathbb{C}}\backslash\overline{D},$ since, for all $z\in\hat{\mathbb{C}}$ and any finite Blaschke product $T,$ $$T(1/\bar{z})=\frac{1}{\overline{T(z)}}.$$ In particular, if $$\lim_{n\to\infty}T_{\sigma^{-n}\omega}^{(n)}(z)=x_{\omega}$$ for all $z\in D,$ then $$\lim_{n\to\infty}T_{\sigma^{-n}\omega}^{(n)}(z)=1/\bar{x}_{\omega}$$ for all $z\in\hat{\mathbb{C}}\backslash\overline{D}.$ 
    If $x_{\omega}\in\mathbb{T},$ then these limits coincide.
\end{remark}

\begin{corollary}
    Suppose $\mathcal{T}$ is eventually EOA. Then, for $\mathbb{P}$-a.e. $\omega\in\Omega$ and all $z\in D,$ $$\lim_{n\to\infty}\frac{1}{n}\sum_{k=0}^{n-1}\delta_{T_{\sigma^{-k}\omega}^{(k)}(z)}=\delta_{x_{\omega}}.$$
\end{corollary}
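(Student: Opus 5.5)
The plan is to derive the statement directly from the pullback convergence already established in the eventually EOA case, followed by a Cesàro-averaging argument. Here $x_{\omega}$ denotes the random attracting fixed point in $D$ appearing in Theorem \ref{possible_limits}(1), i.e.\ the point $c_{\omega}$ of Lemma \ref{N_admissible_rfp}.

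\emph{Step 1: pullback convergence.} By Lemma \ref{N_admissible_rfp}, applicable since $\mathcal{T}$ is admissible and eventually EOA, there is a full measure set $\Omega'\subseteq\Omega$ on which
$$\lim_{k\to\infty}T_{\sigma^{-k}\omega}^{(k)}(z)=x_{\omega}\in D \quad\text{for all } z\in D.$$
The essential point is that $\Omega'$ does not depend on $z$. Indeed, the lemma provides a random attracting fixed point, and by definition the convergence holds for all $z\in D$ simultaneously for $\mathbb{P}$-a.e.\ $\omega$. So I fix $\omega\in\Omega'$ and an arbitrary $z\in D$.

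\emph{Step 2: test functions.} I interpret the convergence of measures weakly against $f\in C(\overline{D})$, or against any continuous $f$ on a compact set containing the orbit and its limit. Fix such an $f$. Continuity of $f$ at $x_{\omega}$ gives $a_k:=f\big(T_{\sigma^{-k}\omega}^{(k)}(z)\big)\to f(x_{\omega})$. This uses only that the convergence in Step 1 is in $\overline{D}$ with the Euclidean topology, which is what the lemma gives.

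\emph{Step 3: Cesàro averages.} A convergent sequence has the same Cesàro limit, so $\frac1n\sum_{k=0}^{n-1}a_k\to f(x_{\omega})=\int f\,d\delta_{x_{\omega}}$. Concretely, given $\epsilon>0$, choose $K$ with $|a_k-f(x_{\omega})|<\epsilon$ for $k\ge K$. Then bound the first $K$ terms by $2K\|f\|_\infty/n$, which tends to $0$. Since $f$ is arbitrary, the empirical measures converge weak-$*$ to $\delta_{x_{\omega}}$ for every $z\in D$ and every $\omega\in\Omega'$.

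There is essentially no obstacle. The only point requiring care is the quantifier order: the full measure set of $\omega$ must be uniform in $z\in D$. This is guaranteed because Lemma \ref{N_admissible_rfp} yields a genuine random attracting fixed point (independent of $z$), rather than the pointwise-in-$z$ version of Proposition \ref{no_circle_pt}.
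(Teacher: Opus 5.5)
Your proposal is correct and follows the same route as the paper. The paper likewise takes the pullback convergence $T_{\sigma^{-n}\omega}^{(n)}(z)\to x_{\omega}$, holding for $\mathbb{P}$-a.e.\ $\omega$ and all $z\in D$, from Theorem \ref{possible_limits}(1), which itself rests on Lemma \ref{N_admissible_rfp}, and then says the Ces\`aro-average statement "follows immediately"; you have simply written out that averaging step explicitly.
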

\begin{proof}
    From Theorem \ref{possible_limits}, there is measurable $x:\Omega\to D$ such that for $\mathbb{P}$-a.e. $\omega\in\Omega$ and all $z\in D,$ $\lim_{n\to\infty}T_{\sigma^{-n}\omega}^{(n)}(z)=x_{\omega}$ and the claim follows immediately.
\end{proof}

For a function $f$ on $\mathbb{T},$ we recall the definition of the harmonic extension of $f$ to $D.$
\begin{definition}\label{harmonic_extension}
    For $x\in D$ and $f\in L^{1}(\text{Leb}),$ let $\hat{f}:D\to\mathbb{C}$ be given by $$\hat{f}(x)=\int_{\mathbb{T}}f(z)\frac{1-|x|^{2}}{|x-z|^{2}}\,dm(z).$$ We call $\hat{f}$ the harmonic extension of $f$ to the unit disc.
\end{definition}

\begin{lemma}[{\cite[Theorem 1.2(b)]{Mane1991}}]\label{harmonic_limit}
    Given $f\in L^1(\text{Leb}),$ if $f$ is continuous at $w\in\mathbb{T},$ then $\lim_{z\to w}\hat{f}(z)=f(w).$
\end{lemma}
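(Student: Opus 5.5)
We need to prove that if $f\in L^1(\text{Leb})$ is continuous at $w\in\mathbb{T}$, then the harmonic extension $\hat f(z)\to f(w)$ as $z\to w$ with $z\in D$. The plan is the standard approximate-identity argument for the Poisson kernel. It rests on three properties of $P_x(\zeta)=\frac{1-|x|^2}{|x-\zeta|^2}$, all with respect to the normalized measure $m$:
\begin{enumerate}
\item \emph{Positivity:} $P_x\ge 0$ on $\mathbb{T}$ for every $x\in D$.
\item \emph{Unit mass:} $\int_{\mathbb{T}}P_x\,dm=1$ for every $x\in D$. This follows from the mean value property of the harmonic function $1$, or from expanding $P_x$ as the series $\sum_{n\in\mathbb{Z}}r^{|n|}e^{in(\theta-t)}$.
\item \emph{Concentration:} for every $\delta>0$, $\sup_{\zeta\in\mathbb{T},\,|\zeta-w|\ge\delta}P_x(\zeta)\to 0$ as $x\to w$.
\end{enumerate}

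For (3): if $|x-w|<\delta/2$ and $|\zeta-w|\ge\delta$, then $|x-\zeta|\ge\delta/2$. Hence $P_x(\zeta)\le 4(1-|x|^2)/\delta^2$, which tends to $0$ because $|x|\to|w|=1$.

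With these properties the argument runs as follows. Fix $\varepsilon>0$ and choose $\delta>0$ such that $|f(\zeta)-f(w)|<\varepsilon$ whenever $\zeta\in\mathbb{T}$ and $|\zeta-w|<\delta$. Using (2), write
$$\hat f(x)-f(w)=\int_{\mathbb{T}}(f(\zeta)-f(w))P_x(\zeta)\,dm(\zeta).$$
Split the integral into the arc $\{|\zeta-w|<\delta\}$ and its complement.

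On the arc, positivity and unit mass bound the contribution by $\varepsilon$.

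On the complement, the contribution is at most
$$\sup_{|\zeta-w|\ge\delta}P_x(\zeta)\,\bigl(\|f\|_{L^1}+|f(w)|\bigr),$$
and by (3) this tends to $0$ as $x\to w$.

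Therefore $\limsup_{x\to w}|\hat f(x)-f(w)|\le\varepsilon$ for every $\varepsilon>0$, which gives the claim.

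The only point needing care is the meaning of "continuous at $w$" for an $L^1$ class. I interpret it as: some representative of $f$ satisfies $\operatorname{ess\,sup}_{|\zeta-w|<\delta}|f(\zeta)-f(w)|\to0$ as $\delta\to0$. Under this reading the near-arc estimate holds almost everywhere, which is all the integral requires. Apart from that, the main (mild) step is the uniform decay bound (3), and it is handled by the elementary distance estimate above.
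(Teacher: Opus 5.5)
Your proof is correct: positivity, unit mass with respect to the normalized measure $m$, and the bound $P_x(\zeta)\le 4(1-|x|^2)/\delta^2$ for $|\zeta-w|\ge\delta$, $|x-w|<\delta/2$ together give the limit along unrestricted (not only nontangential) approach $z\to w$. That unrestricted version is exactly what Corollary~\ref{neighborhood_convergence_Leb} needs, since there $T^{(n)}_{\sigma^{-n}\omega}(0)$ may tend to the boundary point tangentially. The paper gives no proof of its own and simply imports the statement from Doering--Ma\~n\'e \cite[Theorem 1.2(b)]{Mane1991}; your approximate-identity argument is the classical proof of that fact.
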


%\begin{remark}
    %A single Blaschke product $T$ is eventually expanding, that is, there exists $n\in\mathbb{N}$ such that $\inf_{z\in\mathbb{T}}|(T^n)'(z)|>0$ if and only if $T$ has a fixed point in $D.$ However, an admissible Blaschke product cocycle $\mathcal{T},$ made up of eventually expanding Blaschke products, can be constructed so that $\mathcal{T}$ is not eventually EOA.
%\end{remark}

We recall the following result for finite Blaschke products.
\begin{theorem}\label{poisson_kernel}
    Let $P_{x}(z)=\frac{1-|x|^2}{|z-x|^2}, z\in\mathbb{T}$ be the Poisson kernel associated to $x\in D.$ For a finite Blaschke product $T$ and $f\in L^{1}(\text{Leb}),$ $$\int_{\mathbb{T}}f\circ T\cdot P_{x}\,d\text{Leb}=\int_{\mathbb{T}}f\cdot P_{T(x)}\,d\text{Leb}.$$
\end{theorem}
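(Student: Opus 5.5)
Since $\text{Leb}$ is normalized and finite linear combinations of characters are dense in $C(\mathbb{T})$, the identity will be reduced to a statement about harmonic measure. The key observation is that $\int_{\mathbb{T}} g\,P_x\,d\text{Leb}=\hat g(x)$ is the value at $x$ of the harmonic extension of $g$ (Definition \ref{harmonic_extension}). So the claim reads $\widehat{f\circ T}=\hat f\circ T$ on $D$: the harmonic extension intertwines with the Blaschke product.

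\emph{Step 1: continuous $f$.} Let $f\in C(\mathbb{T})$. The function $u:=\hat f\circ T$ is harmonic on $D$, because $T$ is holomorphic on $D$ with values in $D$ and the composition of a harmonic function with a holomorphic map is harmonic. Since $T$ is holomorphic on a neighbourhood of $\overline{D}$ and maps $\mathbb{T}$ into $\mathbb{T}$, Lemma \ref{harmonic_limit} gives, for every $w\in\mathbb{T}$, $u(z)=\hat f(T(z))\to f(T(w))$ as $z\to w$. Hence $u$ is continuous on $\overline{D}$ with boundary values $f\circ T$. By uniqueness for the Dirichlet problem (maximum principle), $u=\widehat{f\circ T}$. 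Evaluating at $x$ gives
\[\int_{\mathbb{T}} f\circ T\cdot P_x\,d\text{Leb}=\hat f(T(x))=\int_{\mathbb{T}} f\cdot P_{T(x)}\,d\text{Leb}.\]

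\emph{Step 2: measures.} Step 1 says exactly that the pushforward under $T$ of the measure $P_x\,d\text{Leb}$ is $P_{T(x)}\,d\text{Leb}$ as Borel measures on $\mathbb{T}$: both are finite positive measures and they agree on $C(\mathbb{T})$.

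\emph{Step 3: $L^1$ functions.} By Step 2 the identity holds for bounded Borel $f$, and then for nonnegative Borel $f$ by monotone convergence. For $f\in L^1(\text{Leb})$, both kernels $P_x$ and $P_{T(x)}$ are bounded above and below on $\mathbb{T}$, so $P_{T(x)}\,d\text{Leb}$ is equivalent to $\text{Leb}$. Applying the nonnegative case to $|f|$ gives
\[\int_{\mathbb{T}}|f|\circ T\cdot P_x\,d\text{Leb}<\infty.\]
This shows two things. First, $f\circ T$ is well defined a.e.: it does not depend on the representative of $f$, since $T^{-1}$ of a Leb-null set is null. Second, $f\circ T$ is integrable. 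Splitting $f$ into positive and negative parts of its real and imaginary parts then yields the identity for all $f\in L^1(\text{Leb})$.

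The main obstacle is the boundary-limit step in Step 1, namely $\hat f(T(z))\to f(T(w))$. This needs $T(z)\to T(w)\in\mathbb{T}$ as $z\to w$ with $T(z)\in D$, which follows from the continuity of $T$ on $\overline{D}$ together with Lemma \ref{harmonic_limit}.
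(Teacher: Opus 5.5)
Your proof is correct. Step 1 shows that harmonic extension commutes with $T$: continuity of $T$ on $\overline{D}$ together with Lemma \ref{harmonic_limit} gives the boundary values, and the maximum principle gives uniqueness. Step 2 uses uniqueness in the Riesz representation theorem to get $T_*(P_x\,d\text{Leb})=P_{T(x)}\,d\text{Leb}$. Step 3 passes to $L^1$ using that both Poisson kernels are bounded above and below on $\mathbb{T}$.

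The paper does not prove this statement; it simply recalls it as a known fact. It is the classical invariance of harmonic measure under inner functions, $\widehat{f\circ T}=\hat f\circ T$, as in Doering--Ma\~n\'e. Your argument is the standard proof of that fact, made simpler because a finite Blaschke product extends continuously to $\overline{D}$, so no radial-limit arguments are needed.

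Two small presentation points:
\begin{itemize}
\item The opening remark about density of characters in $C(\mathbb{T})$ is never used and can be dropped.
\item In Step 3, derive the claim that $T^{-1}(N)$ is Leb-null whenever $N$ is, rather than just asserting it. Apply the nonnegative case to $\mathbf{1}_N$: this gives $\int_{T^{-1}(N)}P_x\,d\text{Leb}=\int_N P_{T(x)}\,d\text{Leb}=0$, and since $P_x\ge\frac{1-|x|}{1+|x|}>0$, it follows that $\text{Leb}(T^{-1}(N))=0$.
\end{itemize}
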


In the following corollary, we show that if $\mathcal{T}$ has a random fixed point on $\mathbb{T}$ (starting from inside $D$), then not only does $$\lim_{n\to\infty}(T_{\sigma^{-n}\omega}^{(n)})_{*}(\text{Leb})=\delta_{x_{\omega}}$$ for $\mathbb{P}$-a.e. $\omega\in\Omega,$ but $$\lim_{n\to\infty}(T_{\sigma^{-n}\omega}^{(n)})_{*}(\text{Leb}|_{W_{\varepsilon}})=\delta_{x_{\omega}},$$ where $W_{\varepsilon}\subset\mathbb{T}$ is an $\varepsilon$-neighborhood of $x_{\omega}\in\mathbb{T}.$
\begin{corollary}\label{neighborhood_convergence_Leb}
    Suppose $\mathcal{T}$ has an attracting random fixed point on $\mathbb{T},$ that is, there exists measurable $c:\Omega\to\mathbb{T}$ such that, for $\mathbb{P}$-a.e. $\omega\in\Omega$ and all $z\in D,$ $$\lim_{n\to\infty}T_{\sigma^{-n}\omega}^{(n)}(z)=c_{\omega}.$$ For $\varepsilon>0,$ let $W_{\varepsilon}:=W_{\varepsilon}(\omega)=\{z\in\mathbb{T}:|z-x_{\omega}|<\varepsilon\}.$ Then, for $\mathbb{P}$-a.e. $\omega\in\Omega$ and all $\varepsilon>0,$ $$\lim_{n\to\infty}\text{Leb}\{z\in\mathbb{T}:T_{\sigma^{-n}\omega}^{(n)}(z)\in W_{\varepsilon}\}=1.$$
\end{corollary}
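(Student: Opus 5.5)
Note that in the statement $x_\omega$ denotes $c_\omega$. The plan is to express the Lebesgue measure of the set in question as a harmonic extension, and then use Theorem \ref{poisson_kernel} together with Lemma \ref{harmonic_limit}.

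Fix $\omega$ in the full measure set where $T_{\sigma^{-n}\omega}^{(n)}(z)\to c_\omega$ for all $z\in D$, and fix $\varepsilon>0$. Let $f=\mathbf{1}_{W_\varepsilon}$. This is in $L^1(\text{Leb})$ and is continuous at $c_\omega$, since $W_\varepsilon$ is an open arc containing $c_\omega$, so $f\equiv1$ near $c_\omega$. Note that $P_0\equiv1$, so $\text{Leb}=P_0\,d\text{Leb}$. Applying Theorem \ref{poisson_kernel} to the finite Blaschke product $T=T_{\sigma^{-n}\omega}^{(n)}$ (a composition of finite Blaschke products is again one) with $x=0$ gives
$$\text{Leb}\{z\in\mathbb{T}:T_{\sigma^{-n}\omega}^{(n)}(z)\in W_\varepsilon\}=\int_{\mathbb{T}}f\circ T_{\sigma^{-n}\omega}^{(n)}\,P_0\,d\text{Leb}=\int_{\mathbb{T}}f\,P_{T_{\sigma^{-n}\omega}^{(n)}(0)}\,d\text{Leb}=\hat f\big(T_{\sigma^{-n}\omega}^{(n)}(0)\big),$$
with $\hat f$ the harmonic extension of Definition \ref{harmonic_extension}. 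Here I am assuming the normalization $\text{Leb}(\mathbb{T})=1$, as implicit in $dm$.

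Since $w_n:=T_{\sigma^{-n}\omega}^{(n)}(0)\in D$ converges to $c_\omega\in\mathbb{T}$ by hypothesis, Lemma \ref{harmonic_limit} gives $\hat f(w_n)\to f(c_\omega)=1$, which is the claim. The set of good $\omega$ does not depend on $\varepsilon$, so the statement holds for all $\varepsilon>0$ simultaneously.

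The only delicate point is checking the hypotheses of Theorem \ref{poisson_kernel} and Lemma \ref{harmonic_limit}: the indicator must be continuous at the boundary point, and the iterate must be a genuine finite Blaschke product. If one prefers not to rely on continuity of $f$ at $c_\omega$, a direct estimate also works. The Poisson kernel $P_{w_n}$ concentrates at $c_\omega$, since $\int_{\mathbb{T}\setminus W_\varepsilon}P_{w_n}\,d\text{Leb}\le (1-|w_n|^2)/(\varepsilon-|w_n-c_\omega|)^2\to0$, and this again yields the limit $1$.
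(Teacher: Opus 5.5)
Your proposal is correct and follows the paper's proof essentially verbatim: write the Lebesgue measure of the preimage as $\int_{\mathbb{T}} 1_{W_\varepsilon}\circ T^{(n)}_{\sigma^{-n}\omega}\,d\text{Leb}$. Then use Theorem \ref{poisson_kernel} to turn it into the harmonic extension $\hat{1}_{W_\varepsilon}$ evaluated at $T^{(n)}_{\sigma^{-n}\omega}(0)$, and conclude with Lemma \ref{harmonic_limit} via continuity of the indicator at $c_\omega$. Your extra remarks are correct details that the paper leaves implicit: the normalization $P_0\equiv 1$, the independence of the good set of $\omega$ from $\varepsilon$, and the direct Poisson-kernel concentration bound.
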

\begin{proof}
    Let $1_{W_{\varepsilon}}$ denote the characteristic function of $W_{\varepsilon}.$ Clearly $1_{W_{\varepsilon}}\in L^1(\text{Leb}).$ We have \begin{align*}
        \int_{\mathbb{T}}1_{W_{\varepsilon}}\circ T_{\sigma^{-n}\omega}^{(n)}\,d\text{Leb}&=\int_{\mathbb{T}}1_{W_{\varepsilon}}\cdot P_{T_{\sigma^{-n}\omega}^{(n)}(0)}\,d\text{Leb}\\&=\hat{1}_{W_{\varepsilon}}(T_{\sigma^{-n}\omega}^{(n)}(0)),
    \end{align*} where $\hat{1}_{W_{\varepsilon}}$ is the harmonic extension of $1_{W_{\varepsilon}}$ as in Definition \ref{harmonic_extension}. Since $1_{W_{\varepsilon}}$ is continuous at $x_{\omega},$ it follows from Lemma \ref{harmonic_limit} that $\lim_{n\to\infty}\hat{1}_{W_{\varepsilon}}(T_{\sigma^{-n}\omega}^{(n)}(0))=1_{W_{\varepsilon}}(x_{\omega})=1.$ Thus, $$\lim_{n\to\infty}\text{Leb}(\{z\in\mathbb{T}:T_{\sigma^{-n}\omega}^{(n)}(z)\in W_{\varepsilon}\})=\lim_{n\to\infty}\int_{\mathbb{T}}1_{W_{\varepsilon}}\circ T_{\sigma^{-n}\omega}^{(n)}\,d\text{Leb}=1.$$
\end{proof}

\subsubsection{Pullback-pushforward equivalence} In this section, we establish an equivalence between the boundary convergence rates of the pullback and pushforward cocycles, and show that this leads to synchronization on the circle under the hypotheses of Theorem \ref{possible_limits}(2).

The following result allows us to compare the boundary convergence rates of pullback and pushforward trajectories of the cocycle.
\begin{proposition}\label{push_pull_equiv}
    For $\mathbb{P}$-a.e. $\omega\in\Omega$, $\lim_{n\to\infty}\frac{1}{n}\log(1-|T_{\sigma^{-n}\omega}^{(n)}(0)|)=\lim_{n\to\infty}\frac{1}{n}\log(1-|T_{\omega}^{(n)}(0)|).$
\end{proposition}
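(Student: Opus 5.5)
Both limits are rescaled versions of $-\frac{1}{n}d_{D}(\cdot,0)$, so the plan is to show that each limit exists almost surely by Kingman's subadditive ergodic theorem and that they share the same constant.

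\textbf{Rewriting both sequences.} As in the proof of Theorem \ref{possible_limits}(2),
$$\log(1-|w|)=-d_{D}(w,0)+\log(1+|w|),\qquad 0\le\log(1+|w|)\le\log 2 .$$
Hence it suffices to show that
$$A^{-}(\omega):=\lim_{n}\frac{1}{n}d_{D}\bigl(T_{\sigma^{-n}\omega}^{(n)}(0),0\bigr)\quad\text{and}\quad A^{+}(\omega):=\lim_{n}\frac{1}{n}d_{D}\bigl(T_{\omega}^{(n)}(0),0\bigr)$$
exist $\mathbb{P}$-a.e. and are equal.

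\textbf{Subadditivity of the forward sequence.} Set $g(n,\omega):=d_{D}(T_{\omega}^{(n)}(0),0)$. Using $T_{\omega}^{(n+m)}=T_{\sigma^{n}\omega}^{(m)}\circ T_{\omega}^{(n)}$, the Schwarz--Pick lemma (Theorem \ref{scwarz_pick}) and the triangle inequality, we get
\begin{align*}
g(n+m,\omega)&\le d_{D}\bigl(T_{\sigma^{n}\omega}^{(m)}(T_{\omega}^{(n)}(0)),T_{\sigma^{n}\omega}^{(m)}(0)\bigr)+d_{D}\bigl(T_{\sigma^{n}\omega}^{(m)}(0),0\bigr)\\
&\le g(n,\omega)+g(m,\sigma^{n}\omega).
\end{align*}
So $g$ is a subadditive cocycle over $\sigma$.

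\textbf{Integrability.} For both Kingman applications I need $\int d_{D}(T_{\omega}(0),0)\,d\mathbb{P}<\infty$. I expect this to be the main obstacle. The admissibility conditions should supply it. For a Blaschke product with $T(0)=b$, the relation $|T'(z)|=\sum_i P_{a_i}(z)$ on $\mathbb{T}$ together with the conformal-invariance inequality $1-|T(0)|\gtrsim \inf_{\mathbb{T}}|T'|/\deg T$ should give $d_{D}(T_{\omega}(0),0)\le C+\log\frac{\deg T_{\omega}}{\inf|T_{\omega}'|}$. The integrability would then follow from condition (2) of Definition \ref{admissible}. If this bound turns out awkward, a fallback is to allow the integral to be $+\infty$. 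Kingman's theorem still gives a.s. convergence to a constant in $[0,\infty]$ for nonnegative subadditive cocycles, and the equality argument below works verbatim in that case.

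\textbf{Identifying the constants.} Kingman's theorem applied to $g$ over $\sigma$ gives
$$\frac{1}{n}g(n,\omega)\to\inf_{n}\frac{1}{n}\int g(n,\cdot)\,d\mathbb{P}\quad\text{a.e.}$$
For the pullback, the proof of Theorem \ref{possible_limits}(2) showed that $f(n,\omega)=d_{D}(T_{\sigma^{-n}\omega}^{(n)}(0),0)$ is subadditive over $\sigma^{-1}$, which is also ergodic and measure preserving. Kingman therefore gives a.e. convergence of $\frac{1}{n}f(n,\omega)$ to $\inf_{n}\frac{1}{n}\int f(n,\cdot)\,d\mathbb{P}$.

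The key observation is that $f(n,\omega)=g(n,\sigma^{-n}\omega)$. Since $\sigma^{-n}$ preserves $\mathbb{P}$, it follows that $\int f(n,\cdot)\,d\mathbb{P}=\int g(n,\cdot)\,d\mathbb{P}$ for every $n$. The two Kingman constants coincide, so $A^{-}=A^{+}$ a.e.

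Subtracting the bounded terms $\frac{1}{n}\log(1+|\cdot|)\to0$ then yields the claimed equality of the two limits of $\frac1n\log(1-|\cdot|)$.
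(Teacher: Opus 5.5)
Your proposal is correct and follows the paper's route: Kingman's theorem is applied to both the pullback and forward sequences, and the two constants are identified via $f(n,\omega)=g(n,\sigma^{-n}\omega)$ and the $\sigma$-invariance of $\mathbb{P}$. You are more careful than the paper in passing to the genuinely subadditive quantity $d_{D}(\cdot,0)$ and in addressing integrability. The bound you anticipate does hold: by Theorem \ref{poisson_kernel} with $x=0$, the density of $(T_\omega)_*\text{Leb}$ is $P_{T_\omega(0)}(w)=\sum_{T_\omega(z)=w}|T_\omega'(z)|^{-1}\le\deg(T_\omega)/\inf_{\mathbb{T}}|T_\omega'|$, and $\sup_{w\in\mathbb{T}}P_{b}(w)=e^{d_{D}(b,0)}$, so $d_{D}(T_\omega(0),0)\le\log\frac{\deg(T_\omega)}{\inf_{\mathbb{T}}|T_\omega'|}$ and condition (2) of Definition \ref{admissible} suffices. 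You should drop the fallback, which is false in general: for $g(n,\cdot)=h\ge0$ with $\int h\,d\mathbb{P}=\infty$ one has $\frac{1}{n}g(n,\cdot)\to0$ while $\inf_n\frac{1}{n}\int g(n,\cdot)\,d\mathbb{P}=\infty$.
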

\begin{proof}
    Let $f_{n}(\omega):=\log(1-|T_{\sigma^{-n}\omega}^{(n)}(0)|)$ and $g_{n}(\omega):=\log(1-|T_{\omega}^{(n)}(0)|).$ Recall that $f_{n}$ and $g_n$ are subadditive. By Kingman's subadditive ergodic theorem, for $\mathbb{P}$-a.e. $\omega\in\Omega,$ $\lim_{n\to\infty}\frac{1}{n}f_{n}(\omega)$ and $\lim_{n\to\infty}\frac{1}{n}g_{n}(\omega)$ exist, are a.s. equal to constant functions $A, B$, and that for $\mathbb{P}$-a.e. $\omega\in\Omega,$ $$A=\lim_{n\to\infty}\frac{1}{n}\int_{\Omega}f_{n}(\omega)\,d\pw$$ and $$B=\lim_{n\to\infty}\frac{1}{n}\int_{\Omega}g_{n}(\omega)\,d\pw.$$ 
    Notice that $f_{n}(\omega)=g_{n}(\sigma^{n}\omega).$ Since $\sigma$ is measure preserving, for all $n\in\mathbb{N},$ we must have $\int_{\Omega}f_{n}(\omega)\,d\mathbb{P}(\omega)=\int_{\Omega}f_{n}(\sigma^{n}\omega)\,d\mathbb{P}(\omega),$ and so $\int_{\Omega}g_{n}(\omega)\,d\mathbb{P}(\omega)=\int_{\Omega}f_{n}(\omega)\,d\mathbb{P}(\omega).$ Thus, the claim follows.
\end{proof}

The following corollary shows that under the hypotheses of Theorem \ref{possible_limits}\ref{case2}, we see  synchronization of almost every trajectory in forward time.
\begin{corollary}\label{push_pull_equiv_limit}
    Suppose $\mathcal{T}$ satisfies the hypotheses of Theorem \ref{possible_limits}\ref{case2}. Then, for $\text{Leb}$-a.e. $y, z\in\mathbb{T}$ and $\mathbb{P}$-a.e. $\omega\in\Omega,$ $$\lim_{n\to\infty}|T_{\omega}^{(n)}(z)-T_{\omega}^{(n)}(y)|=0.$$
\end{corollary}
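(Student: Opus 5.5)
The plan is to transfer the exponential boundary-approach rate from the pullback orbit to the forward orbit using Proposition \ref{push_pull_equiv}, and then apply Theorem \ref{holomorphic_sequence} to the forward compositions $F_n:=T_{\omega}^{(n)}$ rather than to the pullbacks.

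First, under the hypotheses of Theorem \ref{possible_limits}\ref{case2}, the proof of that theorem shows the following. The quantity $A=-\lim_{n\to\infty}\frac{1}{n}\log(1-|T_{\sigma^{-n}\omega}^{(n)}(0)|)$ is a $\mathbb{P}$-a.s. constant, by Kingman applied to the subadditive $d_D(T_{\sigma^{-n}\omega}^{(n)}(0),0)$. The integrability assumption $\int\log(1-|T_\omega(0)|)\,d\mathbb{P}>-\infty$ makes $d_D(T_\omega(0),0)$ integrable, so Kingman applies. The nonvanishing hypothesis on a positive measure set, combined with a.s. constancy, gives $A>0$. By Proposition \ref{push_pull_equiv}, for $\mathbb{P}$-a.e. $\omega$ we then have
$$\lim_{n\to\infty}\frac{1}{n}\log(1-|T_{\omega}^{(n)}(0)|)=-A<0.$$

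Second, fix such an $\omega$ and a number $r\in(e^{-A},1)$. Exactly as in the proof of Theorem \ref{possible_limits}, there is $N(\omega)$ with $1-|T_\omega^{(n)}(0)|\le r^n$ for all $n\ge N(\omega)$. Hence $\sum_{n}(1-|T_\omega^{(n)}(0)|)<\infty$. Each $T_\omega^{(n)}$ is a finite Blaschke product mapping $D$ to itself. Theorem \ref{holomorphic_sequence} with $z_0=0$ therefore gives a full-Lebesgue-measure set $E_\omega\subset\mathbb{T}$ such that $|T_\omega^{(n)}(z)-T_\omega^{(n)}(0)|\to0$ for every $z\in E_\omega$.

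Finally, for $y,z\in E_\omega$ the triangle inequality gives
$$|T_\omega^{(n)}(z)-T_\omega^{(n)}(y)|\le|T_\omega^{(n)}(z)-T_\omega^{(n)}(0)|+|T_\omega^{(n)}(0)-T_\omega^{(n)}(y)|\to0.$$
The set of pairs $E_\omega\times E_\omega$ has full $\text{Leb}\otimes\text{Leb}$ measure, which gives the claim.

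The main obstacle is the first step: checking that the hypotheses really give a strictly negative, a.s. constant forward rate. This needs the subadditivity of $g_n(\omega)=\log(1-|T_\omega^{(n)}(0)|)$, or rather of the hyperbolic distance $d_D(T_\omega^{(n)}(0),0)$, which differs from $-g_n$ by a term bounded by $\log 2$. It also needs the integrability required by Kingman's theorem. Subadditivity follows from the Schwarz–Pick semicontraction exactly as in the pullback case, and integrability is the stated assumption. Note that no limit point on $\mathbb{T}$ is needed in forward time: forward orbits need not converge, but synchronization only requires them to stay close to the orbit of $0$.
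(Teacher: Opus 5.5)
Your proposal is correct and follows essentially the same route as the paper. You transfer the positive rate $A$ from the pullback orbit of $0$ to the forward orbit via Proposition \ref{push_pull_equiv}, deduce $\sum_n(1-|T_\omega^{(n)}(0)|)<\infty$, apply Theorem \ref{holomorphic_sequence} to the forward compositions $T_\omega^{(n)}$ with $z_0=0$, and conclude with the triangle inequality. Your version is slightly more careful than the paper's in two places: you run Kingman's theorem on the genuinely subadditive quantity $d_D(T_\omega^{(n)}(0),0)$, which differs from $-\log(1-|T_\omega^{(n)}(0)|)$ by at most $\log 2$, and you make explicit that the good pairs $(y,z)$ form the full-measure set $E_\omega\times E_\omega$.
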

\begin{proof}
    It follows from the proof of Theorem \ref{possible_limits} and Proposition \ref{push_pull_equiv} that $$\lim_{n\to\infty}\frac{1}{n}d_{D}(T_{\omega}^{(n)}(0),0)=A>0.$$ Following the proof of Theorem \ref{possible_limits}, replacing $T_{\sigma^{-n}\omega}^{(n)}(0)$ with $T_{\omega}^{(n)}(0),$ it follows that $\sum_{n=1}^{\infty}(1-|T_{\omega}^{(n)}(0)|)<\infty.$ We must have, for $\text{Leb}$-a.e. $y,z\in\mathbb{T}$ and $\mathbb{P}$-a.e. $\omega\in\Omega,$ $$\lim_{n\to\infty}|T_{\omega}^{(n)}(z)-T_{\omega}^{(n)}(y)|\le\lim_{n\to\infty}|T_{\omega}^{(n)}(z)-T_{\omega}^{(n)}(0)|+\lim_{n\to\infty}|T_{\omega}^{(n)}(y)-T_{\omega}^{(n)}(0)|=0.$$ 
\end{proof}

\subsubsection{Entropy for eventually EOA cocycles}
In this section, we show that if an admissible Blaschke product cocycle is eventually EOA, then it has positive measure theoretic entropy with respect to its unique absolute continuous random invariant measure given in Theorem \ref{possible_limits}(1).

\begin{proposition}\label{EOA_positive_entropy}
    Suppose $\mathcal{T}$ is eventually EOA, that is, $$\lim_{n\to\infty}\int_{\Omega}\frac{1}{n}\log\inf_{z\in\mathbb{T}}|(T_{\omega}^{(n)})'(z)|\,d\mathbb{P}(\omega)>0.$$ Let $\mu$ be as in Lemma \ref{N_admissible_rfp}. Then $$h_{\mu}^{fib}(\mathcal{T})=\int_{\Omega}\int_{\mathbb{T}}\log|T_{\omega}'(z)|P_{x_{\omega}}(z)\,d\text{Leb}(z)\,d\mathbb{P}(\omega)>0.$$
\end{proposition}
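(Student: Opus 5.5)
The plan has two parts. First, identify the fibered entropy with the integrated Lyapunov exponent, a Rokhlin/Pesin-type formula. Second, show that this exponent is strictly positive, using eventual expansion on average.

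\textbf{The entropy formula.} By Lemma \ref{N_EOA_admissible}, choose $N$ such that $\mathcal{T}^{(N)}$ is admissible EOA. The cocycle $\mathcal{T}^{(N)}$ has driving $\sigma^N$, which may fail to be ergodic. However, all of its fibre measures are absolutely continuous with Poisson-kernel densities $P_{c_\omega}$. These densities are bounded above and below on $\mathbb{T}$ because $c_\omega\in D$.

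For such piecewise (here globally) expanding cocycles with an a.c. invariant measure, the random Rokhlin formula holds:
$$h^{fib}_{\mu}(\mathcal{T}^{(N)})=\int\int\log|(T^{(N)}_\omega)'|\,d\mu_\omega\,d\mathbb{P}.$$
The proof generates with the partition $\xi$ of $\mathbb{T}$ into injectivity arcs of $T_\omega$. Expansion on average makes $\xi$ a random one-sided generator. The Jacobian of $\mu$ along $T_\omega$ is
$$J_\omega=\frac{|T_\omega'|\,P_{c_\omega}}{P_{c_{\sigma\omega}}\circ T_\omega}.$$
The conditional-information formula then gives $h=\int\log J_\omega\,d\mu_\omega\,d\mathbb{P}$. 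The terms $\log P_{c_\omega}-\log P_{c_{\sigma\omega}}\circ T_\omega$ form a coboundary, so their integral vanishes by invariance $(T_\omega)_*\mu_\omega=\mu_{\sigma\omega}$. Their integrability follows from the fact that $\int\log\frac{1+|c_\omega|}{1-|c_\omega|}\,d\mathbb{P}$ is controlled; if needed, this can be handled by truncation, using that $\log P_x$ has bounded oscillation for $|x|\le 1-\varepsilon$.

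Next, $h^{fib}_\mu(\mathcal{T}^{(N)})=N h^{fib}_\mu(\mathcal{T})$ by the standard power rule for fibred entropy. The chain rule $\log|(T^{(N)}_\omega)'|=\sum_{k<N}\log|T'_{\sigma^k\omega}|\circ T^{(k)}_\omega$, together with invariance, gives the stated formula for $\mathcal{T}$ itself. Integrability of $\log|T_\omega'|$ comes from admissibility conditions (2) and (4).

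\textbf{Positivity.} Since $\mu_\omega$ is a probability measure, $\int\log|(T^{(N)}_\omega)'|\,d\mu_\omega\ge\log\inf_{\mathbb{T}}|(T^{(N)}_\omega)'|$. Integrating in $\omega$ and dividing by $N$ gives
$$h^{fib}_\mu(\mathcal{T})\ge\frac1N\int\log\inf_{\mathbb{T}}|(T^{(N)}_\omega)'|\,d\mathbb{P}=f_N>0,$$
by the choice of $N$ in Lemma \ref{N_EOA_admissible}.

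An alternative avoids passing through $\mathcal{T}^{(N)}$. Rewrite $\int\log|T_\omega'|P_{c_\omega}\,d\text{Leb}$ directly: for a Blaschke product, $\int_{\mathbb{T}}\log|T'|P_x\,d\text{Leb}$ equals the harmonic extension of $\log|T'|$ at $x$. One could then try to show this equals $\log\frac{1-|T(x)|^2}{(1-|x|^2)|T'(x)|}\ge0$ plus terms that telescope, so the integral is $\ge0$, with strict inequality when $\deg\ge2$ by Schwarz–Pick. I would use this only as a cross-check.

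\textbf{Main obstacle.} The main difficulty is rigorously justifying the Rokhlin formula, that is, the generator property and the use of the Jacobian for the random cocycle. I would try to cite the formula from \cite{preprint} for the EOA case applied to $\mathcal{T}^{(N)}$, and then transfer it to $\mathcal{T}$ by the power rule. Care is needed about ergodicity of $\sigma^N$, which I would handle by decomposing $\Omega$ into the finitely many $\sigma^N$-ergodic components.
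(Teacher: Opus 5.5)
Your positivity argument is the paper's: the chain rule plus invariance of the Poisson-kernel fibre measures (Theorem \ref{poisson_kernel} with $x_{\sigma\omega}=T_\omega(x_\omega)$) makes $\frac1n\int_\Omega\int_{\mathbb{T}}\log|(T^{(n)}_\omega)'|\,P_{x_\omega}\,d\text{Leb}\,d\mathbb{P}$ independent of $n$, and bounding the inner integral below by $\log\inf_{\mathbb{T}}|(T^{(n)}_\omega)'|$ gives positivity; the paper simply takes the identification of $h^{fib}_\mu(\mathcal{T})$ with this integral as given, so your Part 1 is extra justification rather than a different route. Two slips that do not affect your main line: the Jacobian should be $|T_\omega'|\,(P_{c_{\sigma\omega}}\circ T_\omega)/P_{c_\omega}$, which is harmless because the density terms still form a coboundary; and your optional harmonic-extension cross-check is wrong as stated, because the Schwarz--Pick term $\log\frac{(1-|x|^2)|T'(x)|}{1-|T(x)|^2}\le 0$ enters with the opposite sign and is offset by Green's-function terms at the critical points of $T$ (for $T(z)=z^2$, $x=0$ your expression is $+\infty$ rather than $\log 2$).
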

\begin{proof}
    First we will show that for all $n\in\mathbb{N}$ $$\frac{1}{n}\int_{\Omega}\int_{\mathbb{T}}\log|(T_{\omega}^{(n)})'(z)|P_{x_{\omega}}(z)\,d\text{Leb}(z)\,d\mathbb{P}(\omega)=\int_{\Omega}\int_{\mathbb{T}}\log|T_{\omega}'(z)|P_{x_{\omega}}(z)\,d\text{Leb}\,d\mathbb{P}(\omega).$$ Indeed, we have \begin{align*}
        &\frac{1}{n}\int_{\Omega}\int_{\mathbb{T}}\log|(T_{\omega}^{(n)})'(z)|P_{x_{\omega}}(z)\,d\text{Leb}(z)\,d\mathbb{P}(\omega)\\&=\frac{1}{n}\sum_{k=0}^{n-1}\int_{\Omega}\int_{\mathbb{T}}\log|T_{\sigma^{k}\omega}'(T_{\omega}^{(k)}(z))|P_{x_{\omega}}(z)\,d\text{Leb}(z)\,d\mathbb{P}(\omega)\\&=\frac{1}{n}\sum_{k=0}^{n-1}\int_{\Omega}\int_{\mathbb{T}}\log|T_{\sigma^{k}\omega}'(z)|P_{T_{\omega}^{(k)}(x_{\omega})}(z)d\text{Leb}(z)\,d\mathbb{P}(\omega)\\&=\frac{1}{n}\sum_{k=0}^{n-1}\int_{\Omega}\int_{\mathbb{T}}\log|T_{\sigma^{k}\omega}'(z)|P_{x_{\sigma^{k}\omega}}(z)d\text{Leb}(z)\,d\mathbb{P}(\omega)\\&=\int_{\Omega}\int_{\mathbb{T}}\log|T_{\omega}'(z)|P_{x_{\omega}}(z)\,d\text{Leb}(z)\,d\mathbb{P}(\omega),
    \end{align*} where the third line follows from Theorem \ref{poisson_kernel} and the last line from $\sigma$ being $\mathbb{P}$-preserving. Thus, for all $n\in\mathbb{N},$ \begin{align*}
        \int_{\Omega}\int_{\mathbb{T}}\log|T_{\omega}'(z)|P_{x_{\omega}}(z)\,d\text{Leb}(z)\,d\mathbb{P}(\omega)&=\frac{1}{n}\int_{\Omega}\int_{\mathbb{T}}\log|(T_{\omega}^{(n)})'(z)|P_{x_{\omega}}(z)\,d\text{Leb}(z)\,d\mathbb{P}(\omega)\\&\ge\frac{1}{n}\int_{\Omega}\log\inf_{z\in\mathbb{T}}|(T_{\omega}^{(n)})'(z)|\int_{\mathbb{T}}P_{x_{\omega}}\,d\text{Leb}(z)\,d\mathbb{P}(\omega)\\&=\frac{1}{n}\int_{\Omega}\log\inf_{z\in\mathbb{T}}|(T_{\omega}^{(n)})'(z)|\,d\mathbb{P}(\omega).
    \end{align*} It follows that \begin{align*}
        h_{\mu}^{fib}(\mathcal{T})&=\int_{\Omega}\int_{\mathbb{T}}\log|T_{\omega}'(z)|P_{x_{\omega}}(z)\,d\text{Leb}(z)\,d\mathbb{P}(\omega)\\&\ge\lim_{n\to\infty}\int_{\Omega}\frac{1}{n}\log\inf_{z\in\mathbb{T}}|(T_{\omega}^{(n)})'(z)|\,d\mathbb{P}(\omega)\\&>0.
    \end{align*} 
\end{proof}

\section{A transition between synchronization and chaos}\label{example_section}
In this section, we investigate in detail a 2-map family of admissible Blaschke product cocycles, $\mathcal{T}_{a,b}$,
\begin{equation*}
T_0(z)=\left(\frac{z+a}{1+az}\right)^2, T_1(z)=\left(\frac{z+b}{1+bz}\right)^2 \tag{$*$} \label{eq:star}
\end{equation*}
where $0\le a<1/3<b<1$, with ergodic, invertible, $\mathbb{P}$-preserving driving $\sigma$. We take these values of $a,b$ so that $T_0$ has an attracting fixed point inside $D$ and $1$ is a repelling fixed point, while $T_1$ has an attracting fixed point at $1$. In addition, $T_i([0,1])\subseteq[0,1],$ $i=0,1.$ Let $p:=\mathbb{P}(\omega\in\Omega:T_{\omega}=T_0)$ and $p_{a,b}:=\frac{\log\left(\frac{1+b}{2(1-b)}\right)}{\log\left(\frac{(1-a)(1+b)}{(1+a)(1-b)}\right)}.$ Figure \ref{fig:interval_map} shows an example of one such pair (viewed as maps of the interval instead of maps of $\mathbb{T}$).

\begin{figure}\label{interval_example}
    \centering
    \includegraphics[width=0.5\linewidth]{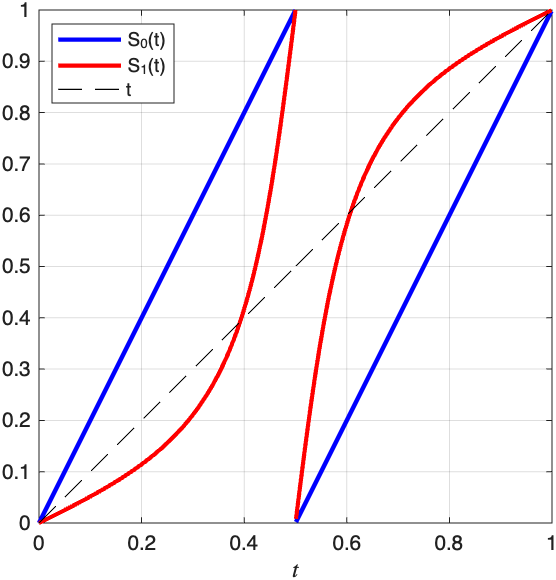}
    \caption{$t$ vs $S_0(t)=(Q^{-1}T_0Q)(t),$ $S_1=(Q^{-1}T_1Q)(t)$ where $Q(t)=e^{2\pi i t},$ $T_0(z)=z^2,$ $T_1(z)=\left(\frac{z+3/5}{1+3z/5}\right)^2$}
    \label{fig:interval_map}
\end{figure}

\begin{proposition}\label{example_expansion}
    If $p>p_{a,b}$ then $\mathcal{T}_{a,b}$ is EOA, otherwise $\mathcal{T}$ is not eventually EOA.
\end{proposition}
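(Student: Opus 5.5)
The plan is to compute $\inf_{z\in\mathbb{T}}|(T_{\omega}^{(n)})'(z)|$ exactly for every $n$, using the fact that both maps in \eqref{eq:star} attain their minimal derivative on $\mathbb{T}$ at the same point, namely the common fixed point $z=1$. Writing $T_0=\varphi_a^2$ with $\varphi_a(z)=\frac{z+a}{1+az}$, we have $|\varphi_a(z)|=1$ on $\mathbb{T}$ and $|\varphi_a'(z)|=\frac{1-a^2}{|1+az|^2}$. Since $a\ge0$, this is minimized on $\mathbb{T}$ at $z=1$. Hence
$$\inf_{z\in\mathbb{T}}|T_0'(z)|=2\,\frac{1-a}{1+a}=|T_0'(1)|,$$
and likewise $\inf_{z\in\mathbb{T}}|T_1'(z)|=2\frac{1-b}{1+b}=|T_1'(1)|$. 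Moreover $T_0(1)=T_1(1)=1$.

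The key step is the behaviour under composition. By the chain rule, for every $z\in\mathbb{T}$,
$$|(T_\omega^{(n)})'(z)|=\prod_{k=0}^{n-1}|T_{\sigma^k\omega}'(T^{(k)}_\omega(z))|\ge\prod_{k=0}^{n-1}\inf_{\mathbb{T}}|T_{\sigma^k\omega}'|.$$
Equality holds at $z=1$, because the orbit of $1$ stays at $1$ and each factor attains its infimum there. Therefore
$$\log\inf_{z\in\mathbb{T}}|(T_\omega^{(n)})'(z)|=\sum_{k=0}^{n-1}\log|T_{\sigma^k\omega}'(1)|.$$
Integrating and using that $\sigma$ preserves $\mathbb{P}$, we get for every $n$
$$\frac1n\int_\Omega\log\inf_{z\in\mathbb{T}}|(T_\omega^{(n)})'(z)|\,d\mathbb{P}=p\log\frac{2(1-a)}{1+a}+(1-p)\log\frac{2(1-b)}{1+b}=:\Lambda(p).$$
So the eventual-EOA limit equals the EOA quantity at $n=1$, and both are $\Lambda(p)$.

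Finally, $\Lambda(p)>0$ is rearranged as follows. Note that $a<1/3$ gives $\log\frac{2(1-a)}{1+a}>0$, while $b>1/3$ gives $\log\frac{1+b}{2(1-b)}>0$. Then $\Lambda(p)>0$ is equivalent to
$$p\log\frac{(1-a)(1+b)}{(1+a)(1-b)}>\log\frac{1+b}{2(1-b)}.$$
The coefficient of $p$ is positive since $b>a$, so this is equivalent to $p>p_{a,b}$.

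Thus if $p>p_{a,b}$, then $\int\log\inf|T_\omega'|\,d\mathbb{P}>0$ and $\mathcal{T}_{a,b}$ is EOA. If $p\le p_{a,b}$, the limit defining eventual EOA equals $\Lambda(p)\le0$, so $\mathcal{T}_{a,b}$ is not eventually EOA.

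Admissibility of $\mathcal{T}_{a,b}$ is immediate, since only two fixed maps of degree $2$ with derivatives bounded away from zero on $\mathbb{T}$ occur. The only genuinely delicate point is the exact equality $\inf|(T^{(n)})'|=\prod\inf|T'_{\sigma^k\omega}|$, which relies on the common minimizing fixed point $z=1$. I expect no real obstacle beyond verifying that the minimum of $|\varphi_a'|$ on $\mathbb{T}$ is at $z=1$, which uses $a,b\ge0$.
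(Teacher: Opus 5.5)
Your proof is correct and follows the same route as the paper. Both arguments use that $T_0$ and $T_1$ minimize their derivative modulus on $\mathbb{T}$ at the common fixed point $z=1$, so that $\frac1n\int_\Omega\log\inf_{\mathbb{T}}|(T^{(n)}_\omega)'|\,d\mathbb{P}=p\log|T_0'(1)|+(1-p)\log|T_1'(1)|$ for every $n$, and then rearrange to obtain the threshold $p_{a,b}$. Your chain-rule argument, with equality attained along the fixed orbit of $1$, makes explicit the step that the paper passes over with ``Thus''.
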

\begin{proof}
    Note that $\inf_{z\in\mathbb{T}}|T_{0}'(z)|=|T_{0}'(1)|=\frac{2(1-a)}{1+a},$ $\inf_{z\in\mathbb{T}}|T_{1}'(z)|=|T_{1}'(1)|=\frac{2(1-b)}{1+b}.$ Thus, \begin{align*}\lim_{n\to\infty}\int_{\Omega}\frac{1}{n}\log\inf_{z\in\mathbb{T}}|(T_{\omega}^{(n)})'(z)|\,d\mathbb{P}(\omega)&=\int_{\Omega}\log|T_{\omega}'(1)|\,d\mathbb{P}(\omega)\\&=p\log |T_0'(1)|+(1-p)\log|T_1'(1)|\\&=\log2+p\log\left(\frac{1-a}{1+b}\right)+(1-p)\log\left(\frac{1-b}{1+b}\right).\end{align*} Rearranging gives $$\begin{cases}\int_{\Omega}\log|T_{\omega}'(1)|\,d\mathbb{P}(\omega)>0,& p>p_{a,b}\\\int_{\Omega}\log|T_{\omega}'(1)|\,d\mathbb{P}(\omega)<0,&p< p_{a,b}\\\int_{\Omega}\log|T_{\omega}'(1)|\,d\mathbb{P}(\omega)=0,&p= p_{a,b}.\end{cases}$$
\end{proof}

We give the main result of this section in the following theorem.
\begin{theorem}\label{theorem_example}
    For $\mathcal{T}_{a,b},$ one of the following 3 cases hold:
    \begin{description}
        \item[$p>p_{a,b}$] There exists measurable $x:\Omega\to[0,1]$ such that, for $\mathbb{P}$-a.e. $\omega\in\Omega$, $\lim_{n\to\infty}T_{\sigma^{-n}\omega}^{(n)}(z)=x_{\omega}<1.$ $\mu$ as given in Theorem \ref{buzzi_blaschke} is the unique physical random measure for $\mathcal{T}_{a,b}^{*}.$ 
        \item[$p< p_{a,b}$] For $\mathbb{P}$-a.e. $\omega\in\Omega,$ all $z\in D$ and $\text{Leb}$-a.e. $z\in\mathbb{T},$ $\lim_{n\to\infty}T_{\sigma^{-n}\omega}^{(n)}(z)=1$ and $\lim_{n\to\infty}T_{\omega}^{(n)}(z)=1.$ Moreover, $\mu:=\{\mu_{\omega}\}_{\omega\in\Omega},$ given by $\mu_{\omega}=\delta_{1},$ is the unique random physical measure for $\mathcal{T}_{a,b}^{*}.$ 
        \item[$p=p_{a,b}$] For $\mathbb{P}$-a.e. $\omega\in\Omega$ and all $z\in D,$ $\lim_{n\to\infty}T_{\sigma^{-n}\omega}^{(n)}(z)=1.$ Moreover, for $\mathbb{P}$-a.e. $\omega\in\Omega$ and any $\varepsilon>0,$ taking $W_{\varepsilon}\subset\mathbb{T}$ to be an $\varepsilon$-neighborhood of $1,$ $\lim_{n\to\infty}(T_{\sigma^{-n}\omega}^{(n)})_{*}(\text{Leb}|_{W_{\varepsilon}})=\delta_1.$ If the maps are chosen iid, then $\lim_{n\to\infty}T_{\omega}^{(n)}(z)\neq1$ for all $z\in D$ and $\text{Leb}$-a.e. $z\in\mathbb{T}.$
    \end{description}
\end{theorem}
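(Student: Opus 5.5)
The plan is to handle the three regimes separately. Proposition \ref{example_expansion} decides which alternative of Theorem \ref{possible_limits} applies, and the real structure of the maps pins down the limit point. The key elementary observation, used in every case, concerns the real sequence $x_n(\omega):=T^{(n)}_{\sigma^{-n}\omega}(0)$.

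\textbf{Monotonicity of the pullback.} Since $T_0,T_1$ map $[0,1]$ into itself, are increasing on $[0,1]$, and satisfy $T_i(0)\ge0$, we get
\[
x_{n+1}(\omega)=T^{(n)}_{\sigma^{-n}\omega}\bigl(T_{\sigma^{-n-1}\omega}(0)\bigr)\ge T^{(n)}_{\sigma^{-n}\omega}(0)=x_n(\omega).
\]
So $x_n(\omega)$ increases to some limit $x_\omega\in(0,1]$. This limit is measurable and satisfies $x_{\sigma\omega}=T_\omega(x_\omega)$. Moreover, if $x_\omega=1$, then for any $z\in D$ the bound $d_D(T^{(n)}_{\sigma^{-n}\omega}(z),x_n(\omega))\le d_D(z,0)$ forces $T^{(n)}_{\sigma^{-n}\omega}(z)\to1$ as well.

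\textbf{Case $p>p_{a,b}$.} Here $\mathcal{T}_{a,b}$ is EOA by Proposition \ref{example_expansion}. Theorem \ref{buzzi_blaschke} and Theorem \ref{possible_limits}(1) then give the random fixed point $x_\omega\in D$ and the unique physical measure. Since $x_\omega$ is the pullback limit of $0$, it equals the monotone limit above. Hence $x_\omega\in[0,1)$.

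\textbf{Case $p\le p_{a,b}$: the pullback limit.} Since $1$ is a common fixed point, the chain rule gives $\inf_{\mathbb{T}}|(T^{(n)}_\omega)'|=|(T^{(n)}_\omega)'(1)|$. The eventual EOA limit therefore equals $\int\log|T_\omega'(1)|\,d\mathbb{P}\le0$, so $\mathcal{T}_{a,b}$ is not eventually EOA. If $x_\omega<1$ on a positive-measure set, ergodicity and $x_{\sigma\omega}=T_\omega(x_\omega)$ would make $x_\omega\in D$ almost surely. Proposition \ref{rfp_oi_eoa} would then make the cocycle eventually EOA, a contradiction. Thus $x_\omega=1$ almost surely, and by the monotonicity paragraph $T^{(n)}_{\sigma^{-n}\omega}(z)\to1$ for all $z\in D$. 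For $p=p_{a,b}$ the pushforward statement about $\text{Leb}|_{W_\varepsilon}$ then follows from Corollary \ref{neighborhood_convergence_Leb}, after normalizing by $\text{Leb}(W_\varepsilon)$.

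\textbf{Case $p<p_{a,b}$: exponential rate and the forward orbit.} To invoke Theorem \ref{possible_limits}\ref{case2}, I must show $A>0$, where $A=-\lim\frac1n\log(1-x_n)$. The integrability hypothesis is trivial with only two maps. By Proposition \ref{push_pull_equiv}, $A$ also governs the forward orbit $T^{(n)}_\omega(0)$, which is real and increasing in $[0,1)$ too.
\begin{itemize}
\item \emph{Local comparison.} Near $1$, set $\lambda_i=|T_i'(1)|$. For $t\in[1-\eta,1]$ we have $1-T_i(t)\le\lambda_ie^{\delta}(1-t)$, with $\delta$ small enough that $p\log\lambda_0+(1-p)\log\lambda_1+\delta<0$.
\item \emph{Staying close.} Starting from $t$ within $\eta'\ll\eta$ of $1$, the orbit stays in $[1-\eta,1]$ as long as the negative-drift random walk $\sum_k(\log\lambda_{\omega_k}+\delta)$ stays below $\log(\eta/\eta')$. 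By the Birkhoff theorem this happens for all time on a set of $\omega$ of positive measure. On that set $1-T^{(n)}_\omega(t)$ decays exponentially.
\item \emph{Conclusion.} The forward orbit of $0$ enters $[1-\eta',1]$ at a finite time. Since $A$ is an almost sure constant, this shows $A>0$.
\end{itemize}
Theorem \ref{possible_limits}\ref{case2} then gives the pullback limit $1$ for $\text{Leb}$-a.e.\ $z\in\mathbb{T}$ and the physical measure $\delta_1$. Corollary \ref{push_pull_equiv_limit}, together with $T_\omega^{(n)}(0)\to1$, gives the forward convergence on the circle.

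\textbf{Case $p=p_{a,b}$, iid: forward non-convergence (main obstacle).} I would first compare the orbit from below near $1$, via $1-T_i(t)\ge\lambda_ie^{-\delta(t)}(1-t)$ with $\delta(t)\to0$ as $t\to1$. I would then use the Chung--Fuchs recurrence of the mean-zero iid walk $\sum\log\lambda_{\omega_k}$, whose $\limsup$ is $+\infty$, to show that $\log(1-T^{(n)}_\omega(z))$ makes macroscopic upward excursions infinitely often. Hence $T^{(n)}_\omega(z)\not\to1$ for every $z\in D$; the hyperbolic semicontraction transfers this from real $z$ to all of $D$.

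The delicate part is controlling the error $\delta(t)$ during excursions. The plan is to show that, while the orbit remains close to $1$, the accumulated errors are summable compared with the walk's fluctuations.

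For $\text{Leb}$-a.e.\ $z\in\mathbb{T}$, I would argue by contradiction. Suppose the set $E$ of points whose forward orbits converge to $1$ had positive measure. Then $\hat{1}_{W_\varepsilon}(T^{(n)}_\omega(0))=\text{Leb}\{z: T^{(n)}_\omega(z)\in W_\varepsilon\}\ge\text{Leb}(E)-o(1)$, which confines $T_\omega^{(n)}(0)$ to a region near $1$ where the harmonic measure of $W_\varepsilon$ is bounded below. Combined with the real monotone structure, this should force $T^{(n)}_\omega(0)\to1$, contradicting the disc statement.

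This final deduction, from a positive lower bound on harmonic measure to convergence of a real orbit, is the step I expect to be hardest. I would try to exploit that $T^{(n)}_\omega(0)$ is real and increasing.
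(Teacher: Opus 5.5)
The pullback arguments are correct and essentially the paper's: monotonicity of $x_n(\omega)$, invariance of $\{x_\omega<1\}$, and Proposition~\ref{rfp_oi_eoa}. So are the case $p>p_{a,b}$ and the $W_\varepsilon$ statement.

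\textbf{The gap.} The genuine gap is the iid claim at $p=p_{a,b}$ that $T^{(n)}_\omega(z)\not\to1$ for $z\in D$, and the mechanism you propose cannot deliver it. Write $\lambda_i=|T_i'(1)|$, let $\omega_n\in\{0,1\}$ be the label of $T_{\sigma^n\omega}$, set $S_n=\sum_{k<n}\log\lambda_{\omega_k}$, and let $y_n=\operatorname{Im}\psi(T^{(n)}_\omega(0))\in(0,1]$. Then exactly
\[
\log y_{n+1}=\log y_n+\log\lambda_{\omega_n}-\log\bigl(1+\tfrac14\lambda_{\omega_n}^2y_n^2\bigr),
\]
so the nonlinear error always pushes towards $1$.
\begin{itemize}
\item On the very event you want to exclude, $\{y_n\le\eta\ \text{for all}\ n\ge N\}$, this forces $\sum_{k=N}^{n-1}\log(1+\tfrac14\lambda_{\omega_k}^2y_k^2)\ge\log(y_N/\eta)+S_n-S_N$, whose $\limsup$ is $+\infty$.
\item So on that event the accumulated errors are never summable; they necessarily keep pace with the walk's excursions.
\item The only pathwise bounds at hand ($y_k\le\eta$, or $y_k\le y_Ne^{S_k-S_N}$) bound them by something of order $\eta^2 n$.
\end{itemize}
The contradiction therefore cannot come from estimating errors along a path. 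You must show that event is null, which is a recurrence statement for a zero-drift chain with a one-signed drift correction. Freezing the rates at $\lambda_i-\delta_i$, as in the paper's proof of Proposition~\ref{forward_divergence}, does not close this step either: for any fixed $\delta_i>0$ the comparison walk has strictly negative drift at $p=p_{a,b}$.

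A Lamperti-type argument works. $L_n=-\log y_n\ge0$ is a Markov chain with bounded increments, conditional drift $O(e^{-2L})$, and conditional variance tending to $\sigma^2=\operatorname{Var}(\log\lambda_{\omega_0})>0$. Hence $\mathbb{E}[\log L_{n+1}-\log L_n\mid L_n=L]\le-\sigma^2/(4L^2)$ for $L\ge L_0$. Stopped at its first entrance to $\{L<L_0\}$ (with $L_0$ large), $\log L_n$ is a supermartingale bounded below, so it converges a.s. But $L_n$ cannot converge, since each step with $\omega_n=1$ raises it by at least $-\log\lambda_1>0$. So the entrance time is a.s.\ finite from every start, and by the Markov property $L_n<L_0$ infinitely often. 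Your semicontraction argument then passes this from $0$ to all $z\in D$.

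\textbf{The circle claim and the case $p<p_{a,b}$.} Your circle argument reduces the $\text{Leb}$-a.e.\ claim to the disc claim, so it inherits the gap. The reduction itself is easy, not the hardest step. Since $P_x\le\frac{1+|x|}{1-|x|}$, if $E=\{z\in\mathbb{T}:T^{(n)}_\omega(z)\to1\}$ had positive measure, then
\[
\text{Leb}(E)-o(1)\le\hat{1}_{W_\varepsilon}(T^{(n)}_\omega(0))\le2\,\text{Leb}(W_\varepsilon)/(1-|T^{(n)}_\omega(0)|),
\]
and letting $\varepsilon\to0$ gives $T^{(n)}_\omega(0)\to1$. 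No monotonicity is needed, and in fact it fails for forward orbits, since $T_0$ moves points near $1$ away from $1$.

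The paper proves the circle claim directly, and more cheaply. On $\psi(\mathbb{T})=\overline{\mathbb{R}}$ the chart maps are $x\mapsto\lambda_ix/(1-\tfrac14\lambda_i^2x^2)$, so near $0$ the nonlinearity has the favourable sign: $|\tilde T_i(x)|\ge\lambda_i|x|$. Thus $|\tilde T^{(n)}_\omega(x)|\ge e^{S_n-S_N}|\tilde T^{(N)}_\omega(x)|$ while the orbit stays near $0$. Then $\limsup_nS_n=+\infty$ gives a contradiction for every $x$ off the countable set of preimages of $0$, with no error control at all.

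For $p<p_{a,b}$, your local argument needs one repair: the entry time into $[1-\eta',1]$ must be matched with the good set $G$ of walks staying below $\log(\eta/\eta')$. This holds on $\{T^{(m)}_\omega(0)\ge1-\eta'\}\cap\sigma^{-m}G$, which has positive measure for large $m$ because $\mathbb{P}(T^{(m)}_\omega(0)\ge1-\eta')=\mathbb{P}(x_m\ge1-\eta')\to1$. The paper avoids this with the global bound $d_D(0,T_i(t))\ge d_D(0,t)-\log\lambda_i$ on $[0,1)$. Combined with Birkhoff, this gives $A\ge-\int_\Omega\log|T_\omega'(1)|\,d\mathbb{P}>0$ directly.
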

The proof of Theorem \ref{theorem_example} will be shown in several sections.

\subsection{Proof of Theorem \ref{theorem_example}(1)}

\begin{proof}[$p>p_{a,b}$]
    In this case, from Proposition \ref{example_expansion}, $\mathcal{T}$ is EOA. It follows from Theorem \ref{possible_limits}\ref{case1} (in fact, from Theorem \ref{buzzi_blaschke}) that there exists measurable $x:\Omega\to D$ such that, for $\mathbb{P}$-a.e. $\omega\in\Omega$, $\lim_{n\to\infty}T_{\sigma^{-n}\omega}^{(n)}(z)=x_{\omega}.$ Since $T_{\omega}([0,1))\subset[0,1)$ for all $\omega\in\Omega,$ $x_{\omega}\in[0,1)$ for $\mathbb{P}$-a.e. $\omega\in\Omega.$ It follows from Theorem \ref{buzzi_blaschke} that $\mu$ is the unique $\mathcal{T}_{a,b}$-invariant physical measure for $\mathcal{T}_{a,b}.$
\end{proof}

\subsection{Proof of Theorem \ref{theorem_example}(2)}
\begin{proof}[$p<p_{a,b}$]
    We will show that $\lim_{n\to\infty}\frac{1}{n}\log(1-|T_{\omega}^{(n)}(0)|)<0$ for $\mathbb{P}$-a.e. $\omega\in\Omega,$ and thus Theorem \ref{possible_limits}$(2)$ implies, for $\mathbb{P}$-a.e. $\omega\in\Omega,$ all $z\in D$ and $\text{Leb}$-a.e. $z\in\mathbb{T},$ $\lim_{n\to\infty}T_{\sigma^{-n}\omega}^{(n)}(z)=1.$ It can be shown that if $0\le z<1$ and $0\le a<1$ then \begin{align*}d_{D}\left(0,\left(\frac{z+a}{1+az}\right)^2\right)&=\log\left(\frac{1+\left(\frac{z+a}{1+az}\right)^2}{1-\left(\frac{z+a}{1+az}\right)^2}\right)\\&\ge\log\left(\frac{1+z}{1-z}\right)-\log\left(\frac{2(1-a)}{1+a}\right)\\&=d_{D}(0,z)-\log\left(\frac{2(1-a)}{1+a}\right).\end{align*} Thus, $d_{D}(0,T_{\omega}(z))\ge d_{D}(0,z)-\log|T_{\omega}'(1)|$ and \begin{align*}
        \frac{1}{n}d_{D}(0,T_{\omega}^{(n)}(0))&\ge-\log|(T_{\omega}^{(n)})'(1)|.
    \end{align*} From Birkhoff's ergodic theorem, it follows that for $\mathbb{P}$-a.e. $\omega\in\Omega,$ \begin{align*}\lim_{n\to\infty}\frac{1}{n}\log(1-|T_{\omega}^{(n)}(0)|)&=-\lim_{n\to\infty}\frac{1}{n}d_{D}(0, T_{\omega}^{(n)}(0))\\&\le\frac{1}{n}\sum_{k=0}^{n-1}\log|T_{\sigma^{k}\omega}'(1)|\\&=\int_{\Omega}\log|T_{\omega}'(1)|d\mathbb{P}(\omega)<0.\end{align*} From Proposition \ref{push_pull_equiv}, it holds that for $\mathbb{P}$-a.e. $\omega\in\Omega,$ $$\lim_{n\to\infty}\frac{1}{n}\log(1-|T_{\sigma^{-n}\omega}^{(n)}(0)|)<0.$$ Thus, from Theorem \ref{possible_limits}\ref{case2}, for $\mathbb{P}$-a.e. $\omega\in\Omega$ and $\text{Leb}$-a.e. $z\in\mathbb{T},$ $\lim_{n\to\infty}T_{\sigma^{-n}\omega}^{(n)}(z)=1.$ Moreover, from Corollary \ref{push_pull_equiv_limit} and the maximum modulous principle, it follows that $\lim_{n\to\infty}T_{\omega}^{(n)}(z)=1$ for $\mathbb{P}$-a.e. $\omega\in\Omega,$ all $z\in D$ and $\text{Leb}$-a.e. $z\in\mathbb{T}.$ Thus, $\mu:=\{\mu_{\omega}\}_{\omega\in\Omega},$ given by $\mu_{\omega}=\delta_{1},$ is not only a $\mathcal{T}^{*}$-invariant measure, but the unique physical measure for $\mathcal{T}^{*}.$
\end{proof}

\subsection{Proof of Theorem \ref{theorem_example}(3)}

The proof of Theorem \ref{theorem_example}(3) will be given in several parts.

In the following proposition, we describe the limiting behavior of pullback orbits starting in the disc.
\begin{proposition}\label{example_limit_pts_disc}
Suppose $p=p_{a,b}.$ For $\mathbb{P}$-a.e. $\omega\in\Omega$ and all $z\in D,$ $\lim_{n\to\infty}T_{\sigma^{-n}\omega}^{(n)}(z)=1.$

\end{proposition}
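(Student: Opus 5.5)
The plan is to exploit the fact that both maps preserve $[0,1)$ and are increasing there, so that the pullback orbit of $0$ is monotone. We then rule out an interior limit using Proposition \ref{rfp_oi_eoa} together with Proposition \ref{example_expansion}.

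\textbf{Step 1 (monotonicity).} For $c\in[0,1)$, the Möbius map $t\mapsto\frac{t+c}{1+ct}$ is increasing on $[0,1]$ and maps $[0,1)$ into $[0,1)$ with nonnegative values. Hence $T_0$ and $T_1$ are increasing self-maps of $[0,1)$. Write $F_{n,\omega}:=T_{\sigma^{-n}\omega}^{(n)}$ and $y_n(\omega):=F_{n,\omega}(0)\in[0,1)$. Then
$$y_{n+1}(\omega)=F_{n,\omega}\bigl(T_{\sigma^{-(n+1)}\omega}(0)\bigr)\ge F_{n,\omega}(0)=y_n(\omega),$$
because $T_{\sigma^{-(n+1)}\omega}(0)\ge0$ and $F_{n,\omega}$ is increasing on $[0,1)$. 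So $y_n(\omega)$ is nondecreasing and converges to some $c_\omega\in[0,1]$. This $c_\omega$ is measurable as a limit of measurable functions. Continuity of $T_\omega$ on $\overline{D}$ gives $c_{\sigma\omega}=T_\omega(c_\omega)$.

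\textbf{Step 2 (the limit is $1$).} Let $E:=\{\omega: c_\omega<1\}$. Since $T_\omega$ maps $[0,1)$ into $[0,1)$ and fixes $1$, the relation $c_{\sigma\omega}=T_\omega(c_\omega)$ gives $\omega\in E$ if and only if $\sigma\omega\in E$. By ergodicity, $\mathbb{P}(E)\in\{0,1\}$. Suppose $\mathbb{P}(E)=1$. Then $c:\Omega\to D$ is measurable and $\lim_n T_{\sigma^{-n}\omega}^{(n)}(0)=c_\omega$ almost surely, so Proposition \ref{rfp_oi_eoa} shows $\mathcal{T}_{a,b}$ is eventually EOA. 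This contradicts Proposition \ref{example_expansion}, since $p=p_{a,b}$ is not in the range $p>p_{a,b}$. Alternatively, the computation there gives exponent $\int\log|T_\omega'(1)|\,d\mathbb{P}=0$. Hence $c_\omega=1$ for $\mathbb{P}$-a.e. $\omega$.

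\textbf{Step 3 (all $z\in D$).} Fix $z\in D$. By Schwarz--Pick (Theorem \ref{scwarz_pick}),
$$d_D\bigl(F_{n,\omega}(z),F_{n,\omega}(0)\bigr)\le d_D(z,0),$$
which is a fixed finite bound. Hyperbolic balls of fixed radius centred at points tending to $\mathbb{T}$ have Euclidean diameter tending to $0$. Indeed, the Möbius map sending the centre $w$ to $0$ pulls back a fixed-radius ball to a disc of Euclidean radius $O(1-|w|)$. Since $F_{n,\omega}(0)\to1$, we get $|F_{n,\omega}(z)-F_{n,\omega}(0)|\to0$, and so $F_{n,\omega}(z)\to1$.

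The main obstacle is Step 2. One must check that Proposition \ref{rfp_oi_eoa} applies as stated. It only needs a measurable $D$-valued almost-sure limit of the pullback of $0$, which the monotonicity supplies, and the admissibility of $\mathcal{T}_{a,b}$, which is assumed throughout the section. One must also check that the invariance argument really gives a $0$--$1$ dichotomy. Without monotonicity one would only get accumulation points (Proposition \ref{constant_full_measure}), and $\Omega_D$ and $\Omega_{\mathbb{T}}$ could both have full measure. This is why Step 1 is the key structural input.
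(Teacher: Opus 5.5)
Your proof is correct and rests on the same two ingredients as the paper's: monotonicity of $T_0,T_1$ on $[0,1)$, and Proposition~\ref{rfp_oi_eoa} to exclude an interior limit. You use them in the opposite order. The paper first uses $\mathbb{P}(\Omega_{\mathbb{T}})=1$ (obtained from Propositions~\ref{no_circle_pt} and~\ref{rfp_oi_eoa}) to find a subsequence of $T_{\sigma^{-n}\omega}^{(n)}(0)$ tending to $1$, then upgrades this by monotonicity. Your monotonicity-first argument gives convergence of the whole sequence at once, so you do not need Proposition~\ref{no_circle_pt}, and your Schwarz--Pick step for passing from $z=0$ to arbitrary $z\in D$ is a more precise justification than the paper's appeal to the maximum modulus principle.
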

\begin{proof}

        Note that the only possible limit point of $T_{\sigma^{-n}\omega}^{(n)}(z)$ on the circle for $z\in[0,1)$ is $1.$ Since $\mathcal{T}$ is not eventually EOA, it follows that $\mathbb{P}(\Omega_{\mathbb{T}})=1.$ Thus, for $\mathbb{P}$-a.e. $\omega\in\Omega$ and all $z\in D,$ there exists $n_{k}:=n_k(\omega)$ such that $T_{\sigma^{-n_k}\omega}^{(n_k)}(0)$ converges to 1 and so, by the maximum modulus principle, $\lim_{k\to\infty}T_{\sigma^{-n_k}\omega}^{(n_k)}(z)=1$ for all $z\in D.$

        Fix $\varepsilon>0.$ Let $n_{\varepsilon}:=n_{\varepsilon}(\omega)=\inf\{j\in\mathbb{N}:T_{\sigma^{-j}\omega}^{(j)}(0)>1-\varepsilon\}.$ $n_{\varepsilon}$ is finite for $\mathbb{P}$-a.e. $\omega\in\Omega.$ It follows that, for all $n\ge n_{\varepsilon},$ \begin{align*}
         T_{\sigma^{-n}\omega}^{(n)}(0)&=T_{\sigma^{-n_{\varepsilon}}\omega}^{(n_{\varepsilon})}(T_{\sigma^{-n}\omega}^{(n-n_{\varepsilon})}(0))\\&\ge T_{\sigma^{-n_{\varepsilon}}\omega}^{(n_{\varepsilon})}(0)\\&>1-\varepsilon,
     \end{align*} where the second line follows from the fact that $T_{\omega}(z)$ restricted to $[0,1]$ is an increasing function for all $\omega\in\Omega.$ Thus, for $\mathbb{P}$-a.e. $\omega\in\Omega,$ $\lim_{n\to\infty}T_{\sigma^{-n}\omega}^{(n)}(0)=1$ and by maximum modulus principle, it follows that  $\lim_{n\to\infty}T_{\sigma^{-n}\omega}^{(n)}(z)=1$ for all $z\in D.$ 
\end{proof}

The following corollary follows immediately from Proposition \ref{example_limit_pts_disc} and \ref{neighborhood_convergence_Leb}.
\begin{corollary}
    Suppose $p=p_{a,b}.$ For $\varepsilon>0,$ let $W_{\varepsilon}=\{z\in\mathbb{T}:|z-1|<\varepsilon\}.$ Then, for all $\varepsilon>0$ and $\mathbb{P}$-a.e. $\omega\in\Omega,$ $$\lim_{n\to\infty}(T_{\sigma^{-n}\omega}^{(n)})_{*}(\text{Leb}|_{W_{\varepsilon}})=\delta_1.$$
\end{corollary}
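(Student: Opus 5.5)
The corollary combines Proposition \ref{example_limit_pts_disc} with Corollary \ref{neighborhood_convergence_Leb}, so the plan is to verify the hypotheses of the latter and then convert its conclusion into weak-$*$ convergence of measures.

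\emph{Step 1: apply the two earlier results.} Proposition \ref{example_limit_pts_disc} shows that when $p=p_{a,b}$ the constant map $c_\omega\equiv 1$ is an attracting random fixed point on $\mathbb{T}$. It is trivially measurable, and for $\mathbb{P}$-a.e. $\omega$ and all $z\in D$ we have $T^{(n)}_{\sigma^{-n}\omega}(z)\to 1$. This is exactly the hypothesis of Corollary \ref{neighborhood_convergence_Leb} with $x_\omega=1$. That corollary gives, for $\mathbb{P}$-a.e. $\omega$ and every $\varepsilon>0$,
$$\text{Leb}\{z\in\mathbb{T}: T^{(n)}_{\sigma^{-n}\omega}(z)\in W_\varepsilon\}\to 1.$$
One full-measure set of $\omega$ works for all $\varepsilon$ simultaneously, since the corollary is stated that way. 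Alternatively, one can restrict to rational $\varepsilon$ and use monotonicity in $\varepsilon$.

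\emph{Step 2: pass to weak-$*$ convergence.} I read the pushforward of $\text{Leb}|_{W_\varepsilon}$ as normalized, or equivalently compare total masses via Theorem \ref{poisson_kernel}. Fix $f\in C(\mathbb{T})$ and $\eta>0$, and choose $\delta$ so that $|f(y)-f(1)|<\eta$ on $W_\delta$. Then bound
$$\Big|\int f\circ T^{(n)}_{\sigma^{-n}\omega}\,d\nu-f(1)\nu(\mathbb{T})\Big|\le \eta\,\nu(\mathbb{T})+2\|f\|_\infty\,\nu\{T^{(n)}_{\sigma^{-n}\omega}\notin W_\delta\},$$
where $\nu$ is the (normalized) restriction of Lebesgue measure. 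The last term is at most a constant times $\text{Leb}\{T^{(n)}_{\sigma^{-n}\omega}\notin W_\delta\}$, which tends to $0$ by Step 1. Hence the pushforward converges to $\delta_1$.

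\emph{Main obstacle.} The only delicate point is bookkeeping: interpreting the normalization of $\text{Leb}|_{W_\varepsilon}$ and making the full-measure set of $\omega$ uniform in $\varepsilon$ and $\delta$. Both are handled by using the bound $\nu\le C\,\text{Leb}$ and a countable intersection over rational parameters.
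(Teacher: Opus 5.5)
Your proposal is correct and follows the paper's route: the paper proves this corollary simply by citing Proposition~\ref{example_limit_pts_disc} together with Corollary~\ref{neighborhood_convergence_Leb}. Your Step~2 adds one step the paper leaves implicit. That corollary only controls $\text{Leb}\{z: T^{(n)}_{\sigma^{-n}\omega}(z)\in W_\delta\}$, and your bound $\nu\le \text{Leb}(W_\varepsilon)^{-1}\,\text{Leb}$ for the normalized restriction is what converts this into weak-$*$ convergence of the pushforward of $\text{Leb}|_{W_\varepsilon}$ to $\delta_1$.
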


In the following proposition, we give several useful estimates. We will prove the remaining claims in Theorem \ref{theorem_example} by conjugating the maps to the upper half plane.
\begin{proposition}\label{plane_bounds}
    Let $\tilde{T}_{a}:\overline{\mathbb{H}}\to\overline{\mathbb{H}}$ for $0<a<1$ be given by $\tilde{T}_a(x)=\frac{2(1-a^2)x}{(1+a)^2-(1-a)^2x^2},$ where $\mathbb{H}$ denotes the upper half plane. Then, for $0\le x<1,$ $$|\tilde{T}_a(x)|\ge|\tilde{T}_a'(0)||x|.$$ 
    
    For $x\in[0,1]i,$ $$|\tilde{T}_a(x)|\le|\tilde{T}_a'(0)||x|.$$ 
    
    Fix $0<\varepsilon<1.$ For $x\in (0,\varepsilon)i,$ and $\frac{4a(1-a^2)+2(1-a)^3\varepsilon^2}{(1+a)^3+(1-a)^2(1+a)\varepsilon^2}\le\delta\le|\tilde{T}_{a}'(0)|,$ $$|\tilde{T}_{a}(x)|\ge\left(|T_{a}'(0)|-\delta\right)|x|.$$
\end{proposition}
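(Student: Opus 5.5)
The plan is to rewrite $\tilde{T}_a$ in a normalized form from which all three estimates can be read off directly. Set $\lambda:=\tilde{T}_a'(0)$ and $c:=\left(\frac{1-a}{1+a}\right)^2$. Dividing numerator and denominator by $(1+a)^2$ gives
$$\tilde{T}_a(x)=\frac{\frac{2(1-a)}{1+a}\,x}{1-c\,x^2}.$$
Differentiating at $0$ shows that $\lambda=\frac{2(1-a)}{1+a}>0$ (this agrees with $|T_0'(1)|$ from Proposition \ref{example_expansion}). Hence $\tilde{T}_a(x)=\frac{\lambda x}{1-cx^2}$, and since $0<a<1$ we have $0<c<1$. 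Every estimate then reduces to comparing the modulus of the denominator $1-cx^2$ with $1$.

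\textbf{First inequality.} For real $0\le x<1$ we have $0\le cx^2<1$. So the denominator lies in $(0,1]$, and therefore $|\tilde{T}_a(x)|=\frac{\lambda|x|}{1-cx^2}\ge\lambda|x|$.

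\textbf{Second inequality.} For $x=it$ with $t\in[0,1]$, we have $x^2=-t^2$, so the denominator equals $1+ct^2\ge1$. This gives $|\tilde{T}_a(it)|=\frac{\lambda t}{1+ct^2}\le\lambda t=|\tilde{T}_a'(0)||x|$.

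\textbf{Third inequality.} Take $x=it$ with $0<t<\varepsilon$. Monotonicity of the denominator gives
$$|\tilde{T}_a(x)|=\frac{\lambda t}{1+ct^2}\ge\frac{\lambda}{1+c\varepsilon^2}\,t.$$
It therefore suffices to show $\frac{\lambda}{1+c\varepsilon^2}\ge\lambda-\delta$, which is equivalent to
$$\delta\ge\frac{\lambda c\varepsilon^2}{1+c\varepsilon^2}.$$
Substituting $\lambda$ and $c$ and clearing the factor $(1+a)^2$ turns the right-hand side into
$$\frac{2(1-a)^3\varepsilon^2}{(1+a)^3+(1-a)^2(1+a)\varepsilon^2}.$$
The hypothesised lower bound on $\delta$ has the same denominator and the numerator $4a(1-a^2)+2(1-a)^3\varepsilon^2$. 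Because $4a(1-a^2)\ge0$, that bound dominates the required quantity, and the inequality follows. The upper constraint $\delta\le|\tilde{T}_a'(0)|$ only ensures that $\lambda-\delta\ge0$, so the statement is not vacuous.

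The only step needing care is this final algebraic comparison. The risk is mis-simplifying $\lambda c/(1+c\varepsilon^2)$, so I would carry it out by multiplying through by $(1+a)^3$ explicitly. Everything else is immediate from the sign of $x^2$ on the real and imaginary axes.
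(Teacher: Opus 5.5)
Your proof is correct and follows essentially the same route as the paper. On the real and imaginary axes, each estimate reduces to comparing the denominator $(1+a)^2-(1-a)^2x^2$ with $(1+a)^2$. For the third bound, both arguments rearrange into a threshold on $\delta$ evaluated at $y=\varepsilon$; you use monotonicity of the denominator first, while the paper uses monotonicity of its threshold function $f_a(y)$ afterwards.

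Your computation also shows that the exact threshold is $\frac{2(1-a)^3\varepsilon^2}{(1+a)^3+(1-a)^2(1+a)\varepsilon^2}$, so the term $4a(1-a^2)$ in the hypothesis is harmless slack. The paper's rearrangement produces this larger threshold and writes its target inequality $(*)$ with $\le$ where $\ge$ is meant, so your version is the cleaner one.
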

\begin{proof}
    Trivially, $\tilde{T}_a(x)|\ge|\tilde{T}_a'(0)||x|$ holds for $x=0.$ For $x\in(0,1)$ we have
    \begin{align*}
        &|\tilde{T}_{a}(x)|\ge|T_{a}'(1)||x|\\&\iff\frac{2(1-a^2)|x|}{|(1+a)^2-(1-a)^2x^2|}\ge\frac{2(1-a)|x|}{1+a}\\&\iff \frac{(1+a)^2}{|(1+a)^2-(1-a)^2x^2|}\ge1\\&\iff|x|\le\sqrt{\frac{2(1+a)^2}{(1-a)^2}}.
    \end{align*} We have, for $x=yi, y\in(0,1],$ \begin{align*}
       &|\tilde{T}_a(x)|\le|\tilde{T}_{a}'(0)||x|\\&\iff \frac{2(1-a^2)|x|}{|(1+a)^2-(1-a)^2x^2|}\le \frac{2(1-a)|x|}{1+a}\\&\iff\frac{(1+a)^2}{(1+a)^2+(1-a)^2y^2}\le1
   \end{align*} which holds since $(1+a)^2+(1-a)^2y^2\ge(1+a)^2$ for $y\in(0,1]$ and $a\in[0,1).$ 
   
   Fix $\varepsilon>0.$ We want $\delta>0$ such that for $x=yi, y\in(0, \varepsilon),$ 
   \begin{equation*}
   \tilde{T}_a(x)=\frac{2(1-a^2)y}{(1+a)^2+(1-a)^2y^2}\le\left(\frac{2(1-a)}{(1+a)}-\delta\right)y=(|\tilde{T}_{a}'(0)|-\delta)|x| \tag{$*$}\label{eq_ast}.
   \end{equation*}
   Rearranging, if $\delta\ge\frac{4a(1-a^2)+2(1-a)^3y^2}{(1+a)^3+(1-a)^2(1+a)y^2}=:f_{a}(y),$ then $(*)$ holds. For $0<a<1,$ $f_a(y)\ge0$ and is an increasing function on $y>0.$ Thus, the claim holds by taking $y=\varepsilon.$
\end{proof}

In the following proposition, we show that, in contrast to the case where $p<p_{a,b},$ when $p=p_{a,b}$ forward trajectories $T_{\omega}^{(n)}(z)$ for $z\in D$ return to and escape from a small neighborhood of $1$ infinitely often.
\begin{proposition}\label{forward_divergence}
    Suppose $p=p_{a,b}$ and the maps are chosen iid. Then, for $\mathbb{P}$-a.e. $\omega\in\Omega,$ $\lim_{n\to\infty}T_{\omega}^{(n)}(z)\neq1$ for all $z\in D.$ In particular, for $\mathbb{P}$-a.e. $\omega\in\Omega,$ all $z\in D$ and sufficiently small $\varepsilon>0,$ $$\lim_{n\to\infty}\#\{0\le j\le n-1:|T_{\omega}^{(j)}(z)-1|<\varepsilon\}=\infty,$$ yet   $$\lim_{n\to\infty}\#\{0\le j\le n-1:|T_{\omega}^{(j)}(z)-1|\ge\varepsilon\}=\infty.$$ 
\end{proposition}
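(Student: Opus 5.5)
The plan is to work in the upper half plane, where the common boundary fixed point $1$ becomes $0$. Both maps then act near $0$ like multiplication by the multipliers $\lambda_0=|T_0'(1)|=\frac{2(1-a)}{1+a}$ and $\lambda_1=|T_1'(1)|=\frac{2(1-b)}{1+b}$. For iid driving, the logarithm of the distance to $0$ should then behave like the random walk
\[
S_n=\sum_{k=0}^{n-1}\log\lambda_{\omega_k}.
\]
By the computation in Proposition \ref{example_expansion}, at $p=p_{a,b}$ this walk has mean zero. It is non-degenerate since $\lambda_0>1>\lambda_1$, which holds because $a<1/3<b$. Hence, for $\mathbb{P}$-a.e. $\omega$, $\limsup_n S_n=+\infty$ and $\liminf_n S_n=-\infty$. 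This recurrence is the source of both claims: infinitely many returns to the $\varepsilon$-neighbourhood of $1$, and infinitely many escapes from it.

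First I reduce to a single orbit. By Schwarz--Pick (Theorem \ref{scwarz_pick}), for any $z\in D$ we have $d_D(T_\omega^{(n)}(z),T_\omega^{(n)}(0))\le d_D(z,0)$. Two sequences at bounded hyperbolic distance have the same accumulation points on $\mathbb{T}$. In particular, one tends to $1$ if and only if the other does, and each enters every Euclidean neighbourhood of $1$ infinitely often exactly when the other enters every slightly larger or smaller neighbourhood infinitely often. So it suffices to treat $z=0$, or any convenient $z_0\in(0,1)$. Since $T_i([0,1))\subseteq[0,1)$, this orbit stays on $[0,1)$.

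Next I conjugate by a Cayley map $C$ sending $1\mapsto0$, $D\to\mathbb{H}$ and $[0,1)$ onto the imaginary segment $(0,1]i$. The conjugated maps are $\tilde T_a,\tilde T_b$ as in Proposition \ref{plane_bounds}, and on the imaginary axis $\tilde T_a(iy)=i\lambda y/(1+c_a y^2)$ with $c_a=(1-a)^2/(1+a)^2$. Writing $y_n=|C(T_\omega^{(n)}(z_0))|$, we get
\[
\log y_{n+1}=\log y_n+\log\lambda_{\omega_n}-\log(1+c\,y_n^2).
\]
This gives two estimates:
\[
\log y_n\le \log y_0+S_n,\qquad \log y_n\ge \log y_0+S_n-C\sum_{k<n}y_k^2 .
\]
The upper bound is the second inequality of Proposition \ref{plane_bounds}. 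Combined with $\liminf S_n=-\infty$, it yields $y_n<\varepsilon'$ infinitely often, which gives the returns to the $\varepsilon$-neighbourhood of $1$.

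For the escapes, and for $T_\omega^{(n)}(z)\not\to1$, I argue by contradiction. Suppose $y_n\le\varepsilon'$ for all $n\ge m$. The perturbation $-\log(1+cy^2)$ is a negative drift of size $O(y^2)$, so the process is a mean-zero random walk with a drift that decays exponentially in $\log y$ as $\log y\to-\infty$. I want to show that such a process cannot stay in $(-\infty,\log\varepsilon']$ forever. The first attempt is to use ladder epochs of $S$. Starting at a time $m$ with $y_m$ tiny, let $\tau$ be the first time after $m$ with $S_\tau-S_m\ge K$. Before $\tau$ we have $y_k\le y_me^{K}$, so the accumulated correction is at most $C y_m^2e^{2K}(\tau-m)$. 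The difficulty is that $\tau-m$ is heavy-tailed, with infinite mean. I would control it by choosing $y_m$ very small, which can be done by the return step, so that
\[
\mathbb{P}\bigl(C y_m^2e^{2K}(\tau-m)>1\bigr)\approx \mathbb{P}(\tau-m>e^{-2K}y_m^{-2}/C)\lesssim e^{K}y_m
\]
is small, using $\mathbb{P}(\tau>t)\sim c\,t^{-1/2}$ for mean-zero walks with finite variance. On the complementary event, $\log y_\tau\ge\log y_m+K-1$, and iterating with the strong Markov property (iid driving) makes the process climb out to above $\log\varepsilon'$ with positive probability from any starting level. A standard Borel--Cantelli or Lévy $0$--$1$ argument then shows that escapes happen infinitely often almost surely. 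This contradicts $y_n\le\varepsilon'$ for all large $n$ and also rules out $y_n\to0$, that is, $T_\omega^{(n)}(z)\not\to1$.

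The main obstacle is this last step. The nonlinear correction enters as a drift that is linear in the excursion length, while the walk's fluctuations only grow like $\sqrt n$, so a naive comparison fails. The argument has to exploit the quadratic smallness of the correction near $0$ together with the $t^{-1/2}$ tail of ladder times.
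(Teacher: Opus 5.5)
Your proposal is correct. The reduction to a real orbit, the conjugation to $\mathbb{H}$, and the returns all match the paper: returns come from $|\tilde T_\omega(iy)|\le\lambda_\omega y$ (Proposition \ref{plane_bounds}) together with $\liminf S_n=-\infty$, which the paper extracts from the LIL.

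The genuine difference is the escape half. The paper uses the linear lower bound $|\tilde T_i(x)|\ge(\lambda_i-\delta_i)|x|$ on $(0,\tilde\varepsilon)i$ with \emph{fixed} $\delta_i>0$. It then argues that, because $k_n$ (the number of $T_0$'s) is unbounded, $(\lambda_0-\delta_0)^{k_n}(\lambda_1-\delta_1)^{n-N-k_n}$ must eventually exceed $\tilde\varepsilon/|\tilde T_\omega^{(N)}(i)|$. That inference does not hold. The exponent $n-N-k_n\to\infty$ as well, and at $p=p_{a,b}$ the comparison walk has mean $p\log(\lambda_0-\delta_0)+(1-p)\log(\lambda_1-\delta_1)<0$, so it tends to $-\infty$ almost surely. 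This is precisely the naive comparison you flag as failing.

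Your ladder-epoch argument keeps the exact correction $-\log(1+cy^2)$ and plays its quadratic smallness against the $t^{-1/2}$ tail of first-passage times. That is what actually closes the escape step. The price is heavier probabilistic input: the strong Markov property at ladder times, the first-passage tail, and a $0$--$1$ law, instead of the LIL alone.

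Your Schwarz--Pick reduction is also more precise than the paper's appeal to the maximum modulus principle. For example, in the form $\bigl|\log|\psi(w)|-\log|\psi(w')|\bigr|\le d_D(w,w')$, it shows the relevant neighbourhoods of $1$ change only by the factor $e^{d_D(z,0)}$, not ``slightly''.

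One thing to fix when you write it out: the smallness must come from $\varepsilon'$, not from $y_m$. The failure bound for a stage started at level $\ell=\log y$ is of order $C_Ke^{K}e^{\ell}$, which \emph{grows} as the process climbs. So you cannot get smallness by first driving $y_m$ down via the return step.
\begin{itemize}
\item Sum instead over successive stages. Their starting levels increase by at least $K-1$ and stay below $\log\varepsilon'$, so the total failure probability is at most $C_Ke^{K}\varepsilon'/(1-e^{1-K})$, uniformly in the starting point.
\item Choosing $\varepsilon'$ small, which the statement permits, gives a uniform escape probability $\eta>0$.
\item With $B_m=\{\exists k\ge m: y_k\ge\varepsilon'\}$ and $\mathcal{F}_n$ the natural filtration, you then have $\mathbb{P}(B_m\mid\mathcal{F}_n)\ge\eta$ for all $n\ge m$. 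L\'evy's $0$--$1$ law gives $\mathbb{P}(B_m)=1$ for every $m$, hence escapes infinitely often almost surely.
\end{itemize}
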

\begin{proof}
    Instead of $T_{\omega},$ we will use $\tilde{T}_{\omega}:=\psi\circ T_{\omega}\circ\psi^{-1},$ where $\psi:\overline{D}\to\overline{\mathbb{H}},$ $\mathbb{H}$ is the upper half plane, and $\psi$ is a homeomorphism given by $\psi(z)=i\frac{1-z}{1+z}.$ Note that $\psi(1)=0$ and so $T_{\omega}^{(n)}(z)\to1$ for $z\in\overline{D}$ if and only if $\tilde{T}_{\omega}^{(n)}(x)\to0$ for $x\in\overline{\mathbb{H}}.$  Let $T_{a}(z)=-\left(\frac{z-a}{1-az}\right)^2.$ Then $\tilde{T}_{a}(x)=\frac{2(1-a^2)x}{(1+a)^2-(1-a)^2x^2}$ and $|\tilde{T}_{a}'(0
     )|=\frac{2(1-a)}{1+a}=|T_{a}'(1)|.$

     First, note that $|\tilde{T}_{\omega}'(0)|=|T_{\omega}'(1)|$ for all $\omega\in\Omega,$ and that $\log|\tilde{T}_{\sigma^{N}\omega}'(0)|,...\log|\tilde{T}_{\sigma^{n}\omega}'(0)|$ is a sequence of iid random variables with zero mean and positive variance given by $$\tilde{\sigma}=p_{a,b}(1-p_{a,b})(\log|\tilde{T}_{0}'(0)|-\log|\tilde{T}_{1}'(0)|)^2>0.$$ From the Law of the Iterated Logarithm (LIL), $$\limsup_{n\to\infty}\frac{\log|(\tilde{T}^{(n)}_{\sigma^{-n}\omega})'(0)|}{\sqrt{2\tilde{\sigma}n\log\log n}}=1$$ and $$\liminf_{n\to\infty}\frac{\log|(\tilde{T}^{(n)}_{\sigma^{-n}\omega})'(0)|}{\sqrt{2\tilde{\sigma}n\log\log n}}=-1$$ for $\mathbb{P}$-a.e. $\omega\in\Omega$.
     Let $k_n:=k_n(\omega)=\#\{1\le j\le n:\tilde{T}_{\sigma^{-j}\omega}=\tilde{T}_0\}.$ In particular, the LIL tells us that, for $\mathbb{P}$-a.e. $\omega\in\Omega$ and any $k\in\mathbb{N},$ there exists $n,m\in\mathbb{N}$ such that $k_n\ge k$ and $m-k_m\ge k.$

      From Proposition \ref{plane_bounds}, for all $x\in[0,1]i,$ $$|\tilde{T}_{\omega}^{(n)}(x)|\le|(\tilde{T}_{\omega}^{(n)})'(0)||x|\le|(\tilde{T}_{\omega}^{(n)})'(0)|.$$ Fix $\varepsilon>0.$ It follows from the LIL that, for $\mathbb{P}$-a.e. $\omega\in\Omega,$ there exists $n\in\mathbb{N}$ such that $|(\tilde{T}_{\omega}^{(n)})'(0)|<\varepsilon$ and thus $|\tilde{T}_{\omega}^{(n)}(x)|<\varepsilon.$ Thus, for $\mathbb{P}$-a.e. $\omega\in\Omega,$ there is a subsequence $n_k:=n_k(\omega)$ such that $\lim_{k\to\infty}\tilde{T}_{\omega}^{(n_k)}(x)=0$ for $x\in(0,1]i$ and thus $\lim_{k\to\infty}T_{\omega}^{(n_k)}(z)=1$ for $z\in[0,1)$ and, from the maximum modulus principle, for all $z\in D.$

     From Proposition \ref{plane_bounds}, for each $\varepsilon>0,$ there exists $\delta_0, \delta_1>0$ such that for $x\in(0,\varepsilon)i,$ $|\tilde{T}_i(x)|\ge|\tilde{T}_{i}'(x)-\delta_i||x|, i=1,2.$ Choose $0<\tilde{\varepsilon}<\varepsilon$ such that $|\tilde{T}_{0}'(x)-\delta_0|>1.$  For all $x\in(0,\tilde{\varepsilon})i,$ we have \begin{align*}
        |\tilde{T_{\omega}}(x)|\ge\begin{cases}
            |\tilde{T}_{0}'(0)-\delta_0||x|,&\text{if }\tilde{T}_{\omega}=\tilde{T}_0\\|\tilde{T}_{1}'(0)-\delta_1||x|,&\text{if }\tilde{T}_{\omega}=\tilde{T}_{1}.
        \end{cases}
    \end{align*}

    Suppose there exists $N:=N_{\tilde{\varepsilon}}\in\mathbb{N}$ such that for all $n\ge N,$  $|\tilde{T}_{\omega}^{(n)}(x)|<\tilde{\varepsilon}$ for $x=i$ (and thus for all $x\in(0,1]i$). We have $$|\tilde{T}_{\omega}^{(n)}(x)|\ge|\tilde{T}_{0}'(0)-\delta_0|^{k_n}|\tilde{T}_{1}'(0)-\delta_1|^{n-N-k_n}|T_{\omega}^{(N)}(x)|.$$ We can find $n$ such that $k_n$ is arbitrarily large. Thus, there exists $n>N$ such that $$\tilde{T}_{\omega}^{(n)}(z)|\ge|\tilde{T}_{0}'(0)-\delta|^{k_n}|\tilde{T}_{1}'(0)-\delta|^{n-N-k_n}|T_{\omega}^{(N)}(x)|\ge\tilde{\varepsilon},$$ a contradiction. Thus, for $\mathbb{P}-a.e.$ $\omega\in\Omega,$ $|\tilde{T}_{\omega}^{(n)}(x)|\ge\tilde{\varepsilon}$ and  $\lim_{n\to\infty}\tilde{T}_{\omega}^{(n)}(x)\neq0.$ It follows that $\lim_{n\to\infty}T_{\omega}^{(n)}(z)\neq1$ for $z\in[0,1)$ and thus, by the maximum modulus principle, $\lim_{n\to\infty}T_{\omega}^{(n)}(z)\neq1$ for all $z\in D.$ In particular, for $\mathbb{P}$-a.e. $\omega\in\Omega$ and sufficiently small $\varepsilon>0,$ $$\lim_{n\to\infty}\#\{0\le j\le n-1:|T_{\omega}^{(j)}(z)-1|<\varepsilon\}=\infty,$$ yet   $$\lim_{n\to\infty}\#\{0\le j\le n-1:|T_{\omega}^{(j)}(z)-1|\ge\varepsilon\}=\infty.$$ 
\end{proof}

The proof of the following proposition follows similarly to the proof of Proposition \ref{forward_divergence}.
\begin{proposition}
    Suppose $p=p_{a,b}$ and the maps are chosen iid. Then, for $\mathbb{P}$-a.e. $\omega\in\Omega,$ $\lim_{n\to\infty}T_{\omega}^{(n)}(z)\neq1$ for $\text{Leb}$-a.e. $z\in\mathbb{T}.$
\end{proposition}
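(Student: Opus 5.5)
We transport the argument of Proposition \ref{forward_divergence} from the interior to boundary points, working throughout in the upper half plane model $\tilde{T}_\omega=\psi\circ T_\omega\circ\psi^{-1}$. Under $\psi$, the circle $\mathbb{T}\setminus\{-1\}$ is mapped to $\mathbb{R}$ and the point $1$ to $0$. For Leb-a.e. $z\in\mathbb{T}$ the forward orbit never lands exactly on $1$: the set of $z$ with $T_\omega^{(n)}(z)=1$ for some $n$ is countable. We therefore fix such a $z$ and set $x=\psi(z)\in\mathbb{R}\setminus\{0\}$, allowing $x=\infty$ if needed, though this too is a null set.

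The key step is a lower bound near $0$ on the real line, the analogue of the third estimate in Proposition \ref{plane_bounds}. By that proposition, for real $0\le x<1$ we have $|\tilde{T}_a(x)|\ge|\tilde{T}_a'(0)||x|$. Since $\tilde{T}_a$ is odd, the same bound holds for $-1<x\le 0$. Hence, as long as $|\tilde{T}_\omega^{(j)}(x)|<1$ for all $j$ in a window $N\le j\le n$, we get
\[
|\tilde{T}_\omega^{(n)}(x)|\ge |(\tilde{T}^{(n-N)}_{\sigma^N\omega})'(0)|\,|\tilde{T}_\omega^{(N)}(x)|.
\]
This is actually cleaner than the interior case, because no $\delta$-loss is needed.

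Now argue by contradiction. Suppose $\tilde{T}_\omega^{(n)}(x)\to0$. Then there is $N$ such that $0<|\tilde{T}_\omega^{(n)}(x)|<1$ for all $n\ge N$; the left inequality holds because the orbit avoids $0$ by the choice of $z$. Iterating the bound gives
\[
|\tilde{T}_\omega^{(n)}(x)|\ge \exp\Big(\sum_{j=N}^{n-1}\log|\tilde{T}_{\sigma^j\omega}'(0)|\Big)\,|\tilde{T}_\omega^{(N)}(x)|.
\]
Under iid choice with $p=p_{a,b}$, the summands are iid with mean zero and positive variance. By the law of the iterated logarithm, applied in forward time with $N$ fixed, the partial sums have $\limsup=+\infty$ almost surely. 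So the right-hand side exceeds $1$ for some $n$, which is a contradiction. The null set of $\omega$ excluded by the LIL does not depend on $z$, so Fubini yields the statement for $\mathbb{P}$-a.e. $\omega$ and Leb-a.e. $z$ simultaneously.

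The main obstacle is the real-line estimate. One must check that the bound $|\tilde{T}_a(x)|\ge|\tilde{T}_a'(0)||x|$ holds on a full neighbourhood $(-\eta,\eta)$ of $0$ for both maps, with $\eta$ uniform. One must also check that orbits approaching $0$ do so within this neighbourhood rather than jumping across the pole of $\tilde{T}_a$, where $\psi$-images near $\infty$ correspond to points near $-1$. Since convergence to $0$ eventually keeps the orbit inside any fixed neighbourhood of $0$, this is handled by choosing $N$ large enough, and the computation in Proposition \ref{plane_bounds} gives a valid range of $|x|$ well beyond $1$.
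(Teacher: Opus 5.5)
Your proposal is correct and is essentially the paper's proof. You conjugate by $\psi$ to the upper half plane and use the linear lower bound $|\tilde{T}_i(y)|\ge|\tilde{T}_i'(0)||y|$ on a real neighbourhood of $0$ (the paper takes $|y|\le M(a)$, you take $|y|<1$ plus oddness), iterate from the time $N$ after which the orbit stays in that neighbourhood and avoids $0$, and reach a contradiction via the law of the iterated logarithm for the forward partial sums of $\log|\tilde{T}_{\sigma^j\omega}'(0)|$.
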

\begin{proof}
    We will show that $\lim_{n\to\infty}T_{\omega}^{(n)}(z)\neq1$ for $\mathbb{P}$-a.e. $\omega\in\Omega$ and $\text{Leb}$-a.e. $z\in\mathbb{T}.$ 
    
     Note that $T_{\omega}^{(n)}(z)\to1$ for $\text{Leb}$-a.e. $z\in\mathbb{T}$ if and only if $\tilde{T}_{\omega}^{(n)}(x)\to0$ for $\text{Leb}$-a.e. $x\in\overline{\mathbb{R}}.$

       Thus, for $|y|\le M(a):=\sqrt{\frac{2(1+a)^2}{(1-a)^2}},$
      $$|\tilde{T}_{\omega}(y)|\ge\begin{cases}
        |\tilde{T}_{0}'(1)||y|,&\text{if }\tilde{T}_{\omega}=\tilde{T}_{0}\\|\tilde{T}_{1}'(1)||y|,&\text{if }\tilde{T}_{\omega}=\tilde{T}_{1}.
    \end{cases}$$ Suppose for the sake of contradiction that $\tilde{T}_{\omega}^{(n)}(x)\to0$ for $\mathbb{P}$-a.e. $\omega\in\Omega$ and a positive measure set of $x\in\overline{\mathbb{R}}.$  Fix $0<\varepsilon<M(a)$ and choose $(\omega, x)$  such that there exists $N:=N(\omega,x)$ where for all $n\ge N,$ $0<|\tilde{T}_{\omega}^{(n)}(x)|<\varepsilon,$ that is, $|\tilde{T}_{\omega}^{(n)}(x)|<\varepsilon$ and $x\not\in\{(\tilde{T}_{\omega}^{(n)})^{-1}(0):n\in\mathbb{N}\}.$ Thus, for $n\ge N,$ \begin{align*}|\tilde{T}_{\omega}^{(n)}(x)|&=|\tilde{T}_{\sigma^{N}\omega}^{(n-N)}(\tilde{T}_{\omega}^{(N)}(x))|\\&\ge|\tilde{T}_{0}'(0)|^{k_n}|\tilde{T}_{1}'(0)|^{n-N-k_n}|\tilde{T}_{\omega}^{(N)}(x)|,\end{align*} where $k_n=\#\{N\le i\le n:\tilde{T}_{\sigma^{i}\omega}=\tilde{T}_{0}\}.$ Thus, \begin{align*}\log|\tilde{T}_{\omega}^{(n)}(x)|\ge &k_n\log|\tilde{T}_{0}'(0)|+\\&+(n-N-k_n)\log|\tilde{T}_{1}'(0)|+\log|T_{\omega}^{(N)}(x)|,\end{align*} where the last term is independent of $n.$ It follows from the LIL that, for $\mathbb{P}$-a.e. $\omega\in\Omega,$ $$\limsup_{n\to\infty}\frac{k_n\log|\tilde{T}_{0}'(0)|+(n-N-k_n)\log|\tilde{T}_{1}'(0)|}{\sqrt{2\tilde{\sigma}n\log\log n}}=1.$$ Thus, for $\mathbb{P}$-a.e. $\omega\in\Omega,$ there exists $m\ge N$ such that $$|\tilde{T}_{0}'(0)|^{k_m}|\tilde{T}_{1}'(0)|^{m-N-k_m}|\ge\frac{\varepsilon}{|\tilde{T}_{\omega}^{(N)}(x)|},$$ and thus $$|\tilde{T}_{\omega}^{(m)}(x)|\ge|\tilde{T}_{0}'(0)|^{k_m}|\tilde{T}_{1}'(0)|^{m-N-k_m}|\tilde{T}_{\omega}^{(N)}(x)|\ge\varepsilon,$$ a contradiction. Thus $\tilde{T}_{\omega}^{(n)}(x)\not\to0$ and so $T_{\omega}^{(n)}(z)\not\to1.$ 
\end{proof}

\begin{remark}
    In the deterministic case, if $T$ of degree at least 2 has a parabolic fixed point $z^{*}\in\mathbb{T}$ ($|T'(z^{*})|=1$), then $z^{*}$ attracts all points in the disc, a contrast to the random example, where $1$ does not attract the forward orbit of points in the disc. In the parabolic deterministic case, the orbit is dense for $\text{Leb}$-a.e. $z\in\mathbb{T},$ and in particular $\lim_{n\to\infty}T^n(z)\neq z^{*}$ for $\text{Leb}$-a.e. $z\in\mathbb{T},$ if and only if $T''(z^{*})=0,$ so $T$ is somewhat degenerate at the fixed point \cite{baker_domains2}. If $T''(z^{*})\neq0,$ then $\lim_{n\to\infty}T^n(z)=z^{*}$ for $\text{Leb}$-a.e. $z\in\mathbb{T}.$ 
\end{remark}

There is a transition from zero to positive measure theoretic entropy for this family at the critical $p_{a,b}$ value. 
\begin{corollary}
    Let $\mu$ be the absolutely continuous invariant measure for $\mathcal{T}_{a,b}^{*}$ for $p>p_{a,b}$ and $\mu$ the Dirac measure at 1 otherwise.  Then $$\begin{cases}h_{\mu}^{fib}(\mathcal{T})>0,&p>p_{a,b}\\h_{\mu}^{fib}(\mathcal{T})=0,&p\le p_{a,b}.
        
    \end{cases}$$

\end{corollary}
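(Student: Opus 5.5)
For $p>p_{a,b}$ I would get positivity from results already proved. Proposition \ref{example_expansion} says $\mathcal{T}_{a,b}$ is EOA in this range, hence eventually EOA. Proposition \ref{EOA_positive_entropy} then gives
$$h_{\mu}^{fib}(\mathcal{T})=\int_{\Omega}\int_{\mathbb{T}}\log|T_{\omega}'(z)|P_{x_{\omega}}(z)\,d\text{Leb}(z)\,d\mathbb{P}(\omega)\ge\int_{\Omega}\log|T_{\omega}'(1)|\,d\mathbb{P}(\omega)>0.$$
Here $\mu$ is the absolutely continuous measure with densities $P_{x_\omega}$ from Theorem \ref{buzzi_blaschke}. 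The last inequality uses that the infimum of $|T_\omega'|$ on $\mathbb{T}$ is attained at $1$, as computed in the proof of Proposition \ref{example_expansion}.

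For $p\le p_{a,b}$ the measure is $\mu_\omega=\delta_1$ for every $\omega$. First I would check that it is $\mathcal{T}^*$-invariant: $T_0(1)=T_1(1)=1$, so $(T_\omega)_*\delta_1=\delta_1=\mu_{\sigma\omega}$. Then I would show that its fibre entropy vanishes. The cleanest route is to compute the fibre entropy directly from a finite partition, using the Abramov--Rokhlin/Kifer definition
$$h_\mu^{fib}(\mathcal{T})=\sup_{\xi}\lim_{n\to\infty}\frac1n\int_\Omega H_{\mu_\omega}\Big(\bigvee_{k=0}^{n-1}(T_\omega^{(k)})^{-1}\xi\Big)\,d\mathbb{P}(\omega).$$
Since each $\mu_\omega$ is a point mass, every finite partition has entropy $0$ with respect to $\mu_\omega$. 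So every refined join has zero conditional entropy, and the supremum is $0$.

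If instead one insists on the paper's formula $\int\int\log|T_\omega'|\,d\mu_\omega\,d\mathbb{P}$, it is only a Rokhlin-type formula, valid for the absolutely continuous measure. It must not be applied to $\delta_1$: doing so would give $\int\log|T_\omega'(1)|\,d\mathbb{P}$, which is $\le 0$ and is $<0$ when $p<p_{a,b}$. I would therefore state explicitly that $h^{fib}$ is the measure-theoretic fibre entropy, which for $\mu$ in the first case coincides with the integral formula by Proposition \ref{EOA_positive_entropy}. For the $p\le p_{a,b}$ case I would use the partition definition, which is the honest computation.

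The main obstacle is this bookkeeping of definitions: making sure the entropy used in Proposition \ref{EOA_positive_entropy} is the genuine fibre entropy, so that a single notion covers both cases. Once that is settled, each case is a one-line consequence of the earlier results or of the Dirac structure of $\mu_\omega$. I would also note that at $p=p_{a,b}$ the cocycle is not eventually EOA, so Theorem \ref{buzzi_blaschke} provides no absolutely continuous measure there. That is consistent with taking $\mu=\delta_1$ at the critical value.
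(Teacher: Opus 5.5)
Your proposal is correct and follows the paper's proof. For $p>p_{a,b}$, positivity comes from Proposition \ref{example_expansion} combined with Proposition \ref{EOA_positive_entropy}; your direct bound by $\int\log|T_\omega'(1)|\,d\mathbb{P}>0$ is just the $n=1$ case of the same inequality. For $p\le p_{a,b}$, the entropy vanishes straight from the partition definition because each $\mu_\omega=\delta_1$ is a point mass. This is what the paper means by appealing to ``the definition of measure theoretic entropy,'' and your warning not to apply the integral formula to $\delta_1$ is well placed.
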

\begin{proof}
    The result follows from Proposition \ref{EOA_positive_entropy} and the definition of measure theoretic entropy.
\end{proof}

\appendix
\section{Blaschke product estimates}
We require several estimates relating to Blaschke products. 

\begin{lemma}\label{Möbius bound}
     Let $\phi_{a}(z):=\frac{z+a}{1+\bar{a}z},$ $a\in D.$ For $z\in D,$ $$\left|\frac{|z|-|a|}{1-|a||z|}\right|\le|\phi_{a}(z)|\le\frac{|z|+|a|}{1+|a||z|}.$$
\end{lemma}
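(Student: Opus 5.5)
The statement is a pair of elementary inequalities for the disc automorphism $\phi_a$. My plan is to reduce both to a single identity for $1-|\phi_a(z)|^2$ and then check that each bound is monotone in the modulus of a complex number lying on the unit circle. The starting point is the standard identity
\[
1-|\phi_a(z)|^2=\frac{(1-|a|^2)(1-|z|^2)}{|1+\bar a z|^2},
\]
which I would verify by expanding $|1+\bar a z|^2-|z+a|^2$. The cross terms $2\,\mathrm{Re}(\bar a z)$ cancel, leaving $1+|a|^2|z|^2-|z|^2-|a|^2=(1-|a|^2)(1-|z|^2)$.

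Since $z,a\in D$, the numerator $(1-|a|^2)(1-|z|^2)$ is fixed and positive. So $|\phi_a(z)|$ is an increasing function of $|1+\bar a z|$. The triangle inequality then traps this quantity:
\[
1-|a||z|\le|1+\bar a z|\le 1+|a||z|.
\]
Using the upper bound $|1+\bar a z|\le 1+|a||z|$ gives
\[
1-|\phi_a(z)|^2\ge\frac{(1-|a|^2)(1-|z|^2)}{(1+|a||z|)^2}.
\]
The right-hand side equals $1-\left(\frac{|z|+|a|}{1+|a||z|}\right)^2$. This is the same identity applied to the real numbers $|z|$ and $|a|$, i.e.\ the case where $\bar a z$ is a positive real. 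Taking square roots yields the upper bound in the statement.

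Likewise, using the lower bound $|1+\bar a z|\ge 1-|a||z|>0$ gives
\[
1-|\phi_a(z)|^2\le 1-\left(\frac{|z|-|a|}{1-|a||z|}\right)^2.
\]
This is again the identity, now with $\bar a z=-|a||z|$. Taking square roots gives the lower bound, and the absolute value in the statement accounts for the case $|z|<|a|$.

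The only step needing care is the bookkeeping of signs and monotonicity: I must confirm that $1-|a||z|>0$ and that both sides lie in $[0,1)$ before taking square roots. Both follow from $z,a\in D$. An alternative route, which I would mention as a check, is that $\phi_a$ maps the circle $|z|=r$ onto a circle; the max and min of $|\phi_a|$ on it then occur where $z$ is aligned with $a$ or with $-a$, which gives the same two values.
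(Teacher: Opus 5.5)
Your proof is correct and is essentially the paper's argument. Both come down to $|\mathrm{Re}(\bar a z)|\le |a||z|$. The paper expands $|\phi_a(z)|^2=\frac{|z|^2+|a|^2+2\mathrm{Re}(\bar a z)}{1+|a|^2|z|^2+2\mathrm{Re}(\bar a z)}$ and cross-multiplies against each squared bound to reach $0\le 2(1-|a|^2)(1-|z|^2)\bigl(|a||z|\mp\mathrm{Re}(\bar a z)\bigr)$. You organize the same computation through the identity $1-|\phi_a(z)|^2=(1-|a|^2)(1-|z|^2)/|1+\bar a z|^2$ and the triangle inequality on $|1+\bar a z|$, which makes the monotonicity and the extremal cases ($\bar a z$ real, of either sign) easier to see.
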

\begin{proof}
        First, we show that $|\phi_{a}(z)|\le\frac{|z|+|a|}{1+|a||z|}$ for $z\in D.$ We have
    \begin{align*}
        |\phi_a(z)|^2&=\frac{(z+a)(\bar{z}+\bar{a})}{(1+\bar{a}z)(1+a\bar{z})}\\&=\frac{|z|^2+|a|^2+2\text{Re}(\bar{a}z)}{1+|a|^2|z|^2+2\text{Re}(\bar{a}z)}.
    \end{align*} Thus, \begin{align*}|&\phi_a(z)|\le\frac{|z|+|a|}{1+|a||z|}\\&\iff \frac{|z|^2+|a|^2+2\text{Re}(\bar{a}z)}{1+|a|^2|z|^2+2\text{Re}(\bar{a}z)}\le\left(\frac{|z|+|a|}{1+|a||z|}\right)^2\\&\iff(|z|^2+|a|^2+2\text{Re}(\bar{a}z))(1+|a||z|)^2\le(1+|a|^2|z|^2+2 \text{Re}(\bar{a}z))(|z|+|a|)^2\\&\iff 0\le 2(1-|a|^2)(1-|z|^2)(|az|-\text{Re}(\bar{a}z)).\end{align*} Clearly, all the terms on the right hand side are nonnegative and so the claim follows.

    Now we show that $|\phi_a(z)|\ge\left|\frac{|z|-|a|}{1-|az|}\right|$ for $z\in D.$ We have
    \begin{align*}
        &|\phi_a(z)|\ge\left|\frac{|z|-|a|}{1-|a||z|}\right|\\&\iff \frac{|z|^2+|a|^2+2\text{Re}(\bar{a}z)}{1+|a|^2|z|^2+2\text{Re}(\bar{a}z)}\ge\left(\frac{|z|-|a|}{1-|a||z|}\right)^2\\&\iff(|z|^2+|a|^2+2\text{Re}(\bar{a}z))(1-|a||z|)^2\ge(1+|a|^2|z|^2+2 \text{Re}(\bar{a}z))(|z|-|a|)^2\\&\iff 0\le 2(1-|a|^2)(1-|z|^2)(|az|+\text{Re}(\bar{a}z)),
    \end{align*} and the claim follows.
\end{proof}
\begin{corollary}\label{Blaschke_bound}
    For a Blaschke product $T(z)=\theta\prod_{i=1}^{n}\frac{z-a_{i}}{1-\bar{a}_iz},$ Lemma \ref{Möbius bound} gives $\prod_{i=1}^{n}\left|\frac{|z|-|a_i|}{1-|a_i||z|}\right|\le|T(z)|\le\prod_{i=1}^{n}\frac{|z|+|a_i|}{1+|a_i||z|}.$
\end{corollary}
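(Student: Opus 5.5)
The plan is to apply Lemma \ref{Möbius bound} to each factor and multiply. For $T(z)=\theta\prod_{i=1}^{n}\frac{z-a_i}{1-\bar{a}_iz}$ with $|\theta|=1$, we have
\[
|T(z)|=\prod_{i=1}^{n}\left|\frac{z-a_i}{1-\bar{a}_iz}\right|,
\]
so it suffices to bound each factor $\left|\frac{z-a_i}{1-\bar{a}_iz}\right|$ above and below by $\frac{|z|+|a_i|}{1+|a_i||z|}$ and $\left|\frac{|z|-|a_i|}{1-|a_i||z|}\right|$ respectively. All quantities involved are nonnegative, so the product of the lower bounds bounds $|T(z)|$ from below and the product of the upper bounds bounds it from above.

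To match the lemma's normalization, note that $\frac{z-a_i}{1-\bar{a}_iz}=\phi_{-a_i}(z)$ with $\phi_a(z)=\frac{z+a}{1+\bar{a}z}$. Indeed, substituting $a=-a_i$ gives $\frac{z-a_i}{1+\overline{(-a_i)}z}=\frac{z-a_i}{1-\bar{a}_iz}$. Since $|-a_i|=|a_i|$ and $-a_i\in D$, Lemma \ref{Möbius bound} applied with $a=-a_i$ yields exactly the desired two-sided bound for $z\in D$, with $|a_i|$ in place of $|a|$.

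The only point needing care is the domain. The lemma is stated for $z\in D$, which is what the paper uses (with $|z|=R<1$). If the bound is also wanted on $\mathbb{T}$, it follows by continuity, or directly: both sides equal $1$ when $|z|=1$. There is no real obstacle here; the main step is the sign bookkeeping $a\mapsto -a_i$ in the lemma.
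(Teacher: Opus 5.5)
Your argument is correct and is exactly the paper's: the corollary is given without separate proof as a direct consequence of the preceding lemma. It follows by writing each factor as $\phi_{-a_i}(z)$, applying the lemma with $a=-a_i$, and multiplying the nonnegative bounds using $|\theta|=1$; your observation that both bounds equal $1$ on $\mathbb{T}$ is a harmless addition.
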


In the following proposition, we compute bounds on the zeroes of a finite Blachke product pre- and post-composed with Möbius transformations.
\begin{proposition}\label{zero_bound}
     Suppose the zeroes of a Blaschke product $T$ of degree at most $n$ are contained in $D_{1-\delta}.$ Then, the zeroes of $\phi_{b}^{-1}\circ T\circ\phi_{a},$ with $a,b\in D,$  are contained in $\tilde{R}:=\tilde{R}_{a,b,\delta,n}=\frac{1+|a|+|b|^{1/n}(1+|a|(1-\delta))-\delta}{1+|a|+|b|^{1/n}(1+|a|(1-\delta))-|a|\delta}<1.$
\end{proposition}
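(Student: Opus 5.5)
The plan is to reduce the problem to bounding the solutions of $T(u)=b$, and then to transport that bound through $\phi_a^{-1}$.

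\textbf{Step 1 (reduction).} A point $w\in D$ is a zero of $\phi_b^{-1}\circ T\circ\phi_a$ exactly when $u:=\phi_a(w)$ satisfies $T(u)=b$. Hence $w=\phi_{-a}(u)$, and by Lemma \ref{Möbius bound} (applied with $-a$),
$$|w|\le\frac{|u|+|a|}{1+|a||u|}.$$
The right-hand side is increasing in $|u|$, so it suffices to bound $|u|$.

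\textbf{Step 2 (bounding $|u|$).} Write $r:=1-\delta$ and $\beta:=|b|^{1/n}$. Let $m:=\deg T\le n$ and let $a_1,\dots,a_m$ be the zeros of $T$, so $|a_i|<r$. If $|u|\le r$ there is nothing to prove. Otherwise $|u|>r>|a_i|$, and the map $t\mapsto (|u|-t)/(1-t|u|)$ is decreasing on $[0,|u|)$. The lower bound in Corollary \ref{Blaschke_bound} then gives
$$|b|=|T(u)|\ge\Big(\frac{|u|-r}{1-r|u|}\Big)^{m}\ge\Big(\frac{|u|-r}{1-r|u|}\Big)^{n},$$
where the second inequality holds because each factor lies in $[0,1)$ and $m\le n$. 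Solving for $|u|$ yields $|u|\le (\beta+r)/(1+r\beta)$.

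\textbf{Step 3 (combining).} Substituting the bound of Step 2 into Step 1 gives
$$|w|\le\frac{\beta+r+|a|(1+r\beta)}{1+r\beta+|a|(\beta+r)}.$$
This is strictly less than $1$, since the denominator minus the numerator equals $(1-|a|)(1-r)(1-\beta)>0$.

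\textbf{The main obstacle.} The stated $\tilde R$ has exactly the numerator above, but its denominator is $1+\beta+|a|r+|a|r\beta$. This exceeds our denominator by $\beta(1-r)(1-|a|)\ge0$, so $\tilde R$ is a strictly \emph{stronger} claim, and I do not expect it to be provable.

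Indeed, the estimates in Steps 1 and 2 are sharp. Take $a=0$, $b=\beta^n\in(0,1)$ and $T(z)=\big(\frac{z-r'}{1-r'z}\big)^n$ with $r'<r$ and $r'$ close to $r$. Then $\phi_b^{-1}\circ T$ has the real zero $(\beta+r')/(1+r'\beta)$. As $r'\to r$ this tends to $(\beta+r)/(1+r\beta)$, which exceeds $\tilde R_{0,b,\delta,n}=(\beta+r)/(1+\beta)$. So the displayed formula for $\tilde R$ appears to be a computational slip.

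My plan is therefore to prove the corrected radius
$$\tilde R=\frac{|b|^{1/n}+1-\delta+|a|\big(1+(1-\delta)|b|^{1/n}\big)}{1+(1-\delta)|b|^{1/n}+|a|\big(|b|^{1/n}+1-\delta\big)}<1,$$
and to record the counterexample above. This corrected radius still depends only on $|a|,|b|,\delta,n$ and is $<1$. It is uniform when $|a|,|b|\le1-\varepsilon$ and $n$ is bounded, which is the only property used in the proof of Proposition \ref{rfp_oi_eoa}. Note also that the bounded-degree hypothesis enters only through $m\le n$.
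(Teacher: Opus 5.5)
Your main claim is correct: the radius displayed in the statement is false, and your counterexample works. Concretely, $n=1$, $\delta=\tfrac12$, $a=0$, $b=\tfrac12$, $r'=0.49$ gives a zero at $0.99/1.245\approx0.795$, while the displayed formula gives $\tilde R=\tfrac23$. The error is in the last line of the paper's proof, not in its argument. The proof shows that the zeros lie in $D_r$ with $r=\frac{|b|^{1/n}+c_{a,\delta}}{1+c_{a,\delta}|b|^{1/n}}$ and $c_{a,\delta}=\frac{1+|a|-\delta}{1+|a|-|a|\delta}$. Clearing denominators gives
\[
r=\frac{1+|a|-\delta+|b|^{1/n}\bigl(1+|a|(1-\delta)\bigr)}{1+|a|-|a|\delta+|b|^{1/n}(1+|a|-\delta)},
\]
which is exactly your corrected radius. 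The printed denominator has $|b|^{1/n}(1+|a|(1-\delta))$ where $|b|^{1/n}(1+|a|-\delta)$ belongs. That substitution is precisely the excess $|b|^{1/n}\delta(1-|a|)$ you identified.

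You reach the corrected radius using the same two estimates as the paper, applied in the opposite order.
\begin{itemize}
\item The paper first pushes the zeros of $T$ through $\phi_a$, giving radius $c_{a,\delta}$. It then applies the lower bound of Corollary~\ref{Blaschke_bound} on the circle $|z|=r$ and uses Rouch\'e's theorem to trap all solutions of $T\circ\phi_a=b$.
\item You evaluate that lower bound directly at a solution $u$ of $T(u)=b$, and only afterwards pull back by $\phi_{-a}$.
\end{itemize}
The two radii coincide because $(x,y)\mapsto\frac{x+y}{1+xy}$ is commutative and associative. Your pointwise version avoids Rouch\'e and the counting of solutions, and it handles $\deg T=m\le n$ explicitly, so it is slightly cleaner.

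One correction to your closing remark. The downstream use, Proposition~\ref{zero_bound_rfp}, does not bound the degree. It takes $b=T(a)$ and $n=\deg T$, and uses $|T(a)|^{1/n}\le c_{a,\delta}$. Your radius is increasing in $|b|^{1/n}$, so this still yields the degree-free bound $\frac{2c_{a,\delta}}{1+c_{a,\delta}^2}<1$. That bound is all Proposition~\ref{rfp_oi_eoa} needs. However, the explicit $\hat R$ printed in Proposition~\ref{zero_bound_rfp} inherits the same slip. For $T(z)=z^2$, $a=\tfrac12$ and $\delta=0.9$, the map $\phi_{T(a)}^{-1}\circ T\circ\phi_a$ has a zero at $-0.8$, while $\hat R\approx0.73$.
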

\begin{proof}
    
    The zeroes of $T\circ\phi_{a}$ are $\phi_{a}^{-1}(z_{i})=\phi_{-a}(z_{i}),$ $i=1,...,n$ where $z_{i}$ is the $i^{th}$ zero of $T.$ Thus from Lemma \ref{Möbius bound}, we have \begin{align*}
        |\phi_{-a}(z_i)|&\le\frac{|z_{i}|+|a|}{1+|a||z_i|}\\&\le\frac{\sup_i|z_i|+|a|}{1+|a|\sup_i|z_i|}\\&<\frac{1+|a|-\delta}{1+|a|-|a|\delta}=:c_{a,\delta},
    \end{align*} where the second line follows from $\frac{|z_{i}|+|a|}{1+|a||z_i|}$ being an increasing function of $|z_i|.$ Thus, the zeroes of $g:=T\circ\phi_a$ are contained in $D_{c_{a,\delta}}.$ 

    The zeroes of $\phi_{b}^{-1}\circ g$ are the solutions to $g(z)=b.$ We show that the solutions to $g(z)=b$ lie in $D_{\tilde{R}_{a,b,\delta,n}}$ where $\tilde{R}:=\tilde{R}_{a,b,\delta, n}=\frac{c_{a,\delta}+|b|^{1/n}}{1+c_{a,\delta}|b|^{1/n}}.$ Indeed, by Rouché's theorem, if, for some $r<1,$ $|g(z)|>|b|$ for all $|z|=r,$ then $g(z)=b$ has $n$ solutions in $D_{r},$ and so no solutions in $D\backslash D_{r}.$ Thus, $\phi_b^{-1}\circ T\circ\phi_a$ has all of its zeroes in $D_r.$ It remains to show that taking $r=\tilde{R}_{a,b,\delta, n}$ satisfies $|g(z)|>|b|$ for all $|z|=r.$ Take $r>c_{a,\delta}.$ From Corollary \ref{Blaschke_bound}, for all $|z|=r,$ $$|(T\circ\phi_a)(z)|>\left(\frac{r-c_{a,\delta}}{1-rc_{a,\delta}}\right)^n.$$ Thus, choosing $r$ such that $\left(\frac{r-c_{a,\delta}}{1-rc_{a,\delta}}\right)^n=|b|$ ensures that all solutions to $g(z)=b$ lie in $D_r$ and thus that all the zeroes of $\phi_{b}^{-1}\circ T\circ\phi_{a}$ are contained in $D_r$. Rearranging, we obtain $$r=\frac{|b|^{1/n}+c_{a,\delta}}{1+c_{a,\delta}|b|^{1/n}}$$ and the claim follows from substituting $c_{a,\delta}$ and simplifying.
\end{proof}

\begin{proposition}\label{zero_bound_rfp}
     Suppose the zeroes of a Blaschke product $T$ of degree $n$ are contained in $D_{1-\delta}.$ Then, the zeroes of $\phi_{T(a)}^{-1}\circ T\circ\phi_{a},$ with $a\in D,$  are contained in $\hat{R}:=\hat{R}_{a,\delta}=1-\frac{\delta(1-|a|)}{(2-\delta)(1+|a|)}<1.$
\end{proposition}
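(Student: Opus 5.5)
The plan is to reduce the statement to a fiber problem for $g:=T\circ\phi_a$ and then compare the two sides of $g(w)=g(0)$ factor by factor, as in Proposition \ref{zero_bound}. Write $S:=\phi_{T(a)}^{-1}\circ T\circ\phi_a=\phi_{g(0)}^{-1}\circ g$, since $g(0)=T(a)$. Then $S(0)=0$, and $w$ is a zero of $S$ exactly when $g(w)=g(0)$. The zero $w=0$ trivially lies in $D_{\hat R}$, so the task is to locate the remaining solutions of $g(w)=g(0)$.

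\textbf{Step 1 (zeros of $g$).} As in the proof of Proposition \ref{zero_bound}, the zeros of $g$ are $c_i:=\phi_{-a}(z_i)$, where the $z_i$ are the zeros of $T$. By Lemma \ref{Möbius bound} they satisfy
$$|c_i|\le c:=\frac{1+|a|-\delta}{1+|a|-|a|\delta},$$
and so
$$1-c=\frac{\delta(1-|a|)}{1+|a|-|a|\delta}.$$

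\textbf{Step 2 (comparison of moduli).} Suppose $g(w)=g(0)$ with $|w|=r>c$. Corollary \ref{Blaschke_bound} gives two estimates:
$$|g(0)|=\prod_i|c_i|, \qquad |g(w)|\ge \prod_i \frac{r-|c_i|}{1-|c_i|r}.$$
For a single factor one has $\frac{r-|c_i|}{1-|c_i|r}>|c_i|$ exactly when $r>\frac{2|c_i|}{1+|c_i|^2}$. So every such $w$ satisfies $|w|\le \frac{2c}{1+c^2}$, with the bound attained only if all $|c_i|=c$. Equivalently, in hyperbolic terms, $d_D(0,w)\le 2\,d_D(0,c)$.

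\textbf{Step 3 (algebraic comparison with $\hat R$).} It remains to show
$$\frac{2c}{1+c^2}\le\hat R=1-\frac{\delta(1-|a|)}{(2-\delta)(1+|a|)},$$
that is,
$$\frac{(1-c)^2}{1+c^2}\ge \frac{\delta(1-|a|)}{(2-\delta)(1+|a|)},$$
after substituting the value of $1-c$ from Step 1 and clearing denominators.

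\textbf{Main obstacle.} Step 3 is where I expect the argument to break. The left side is quadratic in $\delta(1-|a|)$, while the target $1-\hat R$ is only linear. For small $\delta$ or $|a|$ near $1$ the crude per-factor bound of Step 2 is therefore too weak.

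The first repair I would try is to keep Step 2 as a product rather than a per-factor bound. The idea is to exploit that $|g(0)|=\prod_i|c_i|$ is small whenever $n\ge2$: for $n$ zeros of modulus about $c$, the product $c^n$ is much smaller than $c$. This should force $r$ closer to $c$ than $\frac{2c}{1+c^2}$.

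If that still fails, the fallback is to test the extremal configuration, namely all $z_i=1-\delta$ and $a$ real and negative. This choice makes every $|c_i|$ equal to $c$ and the per-factor estimate sharp. I would compute the second solution of $g(w)=g(0)$ explicitly for $n=1,2$, to decide whether the constant in $\hat R$ is attainable or whether the statement needs the weaker radius $\frac{2c}{1+c^2}$. For the application in Proposition \ref{rfp_oi_eoa}, any radius $<1$ depending only on $\varepsilon$ and $\delta$ suffices.
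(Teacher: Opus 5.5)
Your Steps 1 and 2 are correct, and the obstruction you found in Step 3 is genuine. It cannot be repaired, because the proposition as stated is false. Take $n=2$, $a=0$ and $T(z)=\left(\frac{z-t}{1-tz}\right)^2$ with $0<t<1-\delta$. Then $T(0)=t^2$, and the zeros of $\phi_{T(0)}^{-1}\circ T$ are the solutions of $\frac{z-t}{1-tz}=\pm t$, namely $z=0$ and $z=\frac{2t}{1+t^2}$. For $\delta=\frac{1}{10}$ and $t=0.89$ this gives $\frac{2t}{1+t^2}\approx 0.993$, whereas $\hat{R}_{0,1/10}=\frac{18}{19}\approx 0.947$.

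More generally, put all zeros of $T$ at $t$, take $a\le 0$ real and $n$ even. Then $g(w)=\left(\frac{w-c}{1-cw}\right)^n$ with $c=\phi_{-a}(t)$, and $w=\frac{2c}{1+c^2}$ solves $g(w)=c^n=g(0)$. Letting $t\uparrow 1-\delta$ shows that your radius $\frac{2c}{1+c^2}$, with $c=c_{a,\delta}$, is sharp. This also rules out your first repair: the smallness of $|g(0)|=c^n$ is exactly compensated by the $n$ factors of $|g(w)|$.

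The paper obtains $\hat{R}$ from Proposition \ref{zero_bound}. There Rouch\'e correctly gives the radius $\frac{c+s}{1+cs}$ with $s=|b|^{1/n}$, but the final substitution and simplification is wrong. In fact
\[
1-\frac{c+s}{1+cs}=\frac{(1-c)(1-s)}{1+cs}=\frac{\delta(1-|a|)(1-s)}{(1+|a|-|a|\delta)+s(1+|a|-\delta)},
\]
whereas the paper's closed form loses the factor $1-s$ and has $s(1+|a|-|a|\delta)$ in the denominator. Inserting the paper's own estimate $s=|T(a)|^{1/n}\le c$ into the correct expression gives exactly your bound $\frac{2c}{1+c^2}$. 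So your per-factor comparison of $|g(w)|$ with $|g(0)|=\prod_i|c_i|$ and the paper's Rouch\'e argument, done correctly, arrive at the same place.

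The statement to prove instead is that the zeros have modulus at most $\frac{2c}{1+c^2}$ with $c=c_{a,\delta}$, and your Step 2 already proves this. As you note, this is all Proposition \ref{rfp_oi_eoa} needs. Since $c_{a,\delta}$ is increasing in $|a|$, for $|a|\le 1-\varepsilon$ the radius is bounded by a constant $<1$ depending only on $\varepsilon$ and $\delta$. That proof therefore goes through with $\frac{2c}{1+c^2}$ in place of $\hat{R}$.
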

\begin{proof}
    From Proposition \ref{zero_bound}, we know the zeroes are contained in $D_{\tilde{R}_{a,\delta,n}},$ where \begin{align*}\tilde{R}_{a,\delta,n}&=\frac{1+|a|+|T(a)|^{1/n}(1+|a|(1-\delta))-\delta}{1+|a|+|T(a)|^{1/n}(1+|a|(1-\delta))-|a|\delta}\\&=1-\frac{\delta(1-|a|)}{1+|a|+|T(a)|^{1/n}(1+|a|(1-\delta))-|a|\delta}.\end{align*}
 Using  $\deg(T)=n$ and Lemma \ref{Möbius bound}, we have $$|T(a)|\le\left(\frac{|a|+1-\delta}{1+|a|(1-\delta)}\right)^n.$$ Thus, \begin{align*}
    \tilde{R}_{a,\delta,n}&=1-\frac{\delta(1-|a|)}{1+|a|+|T(a)|^{1/n}(1+|a|(1-\delta))-|a|\delta}\\&\le1-\frac{\delta(1-|a|)}{1+|a|+\left(\left(\frac{|a|+1-\delta}{1+|a|(1-\delta)}\right)^n\right)^{1/n}(1+|a|(1-\delta))-|a|\delta}\\&=1-\frac{\delta(1-|a|)}{2+2|a|-\delta-|a|\delta}\\&=1-\frac{\delta(1-|a|)}{(2-\delta)(1+|a|)}=:\hat{R}_{a,\delta}.
\end{align*}
\end{proof}

The following proposition follows from a result of Martin \cite[Proposition 1]{martin}. We include a proof for completeness.
\begin{proposition}\label{derivative_bounds}
    Let $T(z)=\rho\prod_{i=1}^{d}\frac{z-a_{i}}{1-\bar{a_i}z}, d\ge1$ be a finite Blaschke product. Then, for all $z\in\mathbb{T},$ $$\sum_{i=1}^{d}\frac{1-|a_i|}{1+|a_i|}\le|T'(z)|\le\sum_{i=1}^{d}\frac{1+|a_i|}{1-|a_i|}.$$
\end{proposition}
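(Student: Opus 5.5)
Both bounds come from the logarithmic derivative of $T$ evaluated on the circle. For $z\in\mathbb{T}$, write each factor as $B_i(z)=\frac{z-a_i}{1-\bar a_i z}$. Then $T'(z)/T(z)=\sum_{i=1}^d B_i'(z)/B_i(z)$, and $|T(z)|=1$ on $\mathbb{T}$, so
\[
|T'(z)|=\Big|\sum_{i=1}^{d}\frac{B_i'(z)}{B_i(z)}\Big| .
\]
A direct computation gives $B_i'(z)=\frac{1-|a_i|^2}{(1-\bar a_i z)^2}$. Hence
\[
\frac{B_i'(z)}{B_i(z)}=\frac{1-|a_i|^2}{(z-a_i)(1-\bar a_i z)}.
\]
For $|z|=1$ we have $1-\bar a_i z=z\,\overline{(z-a_i)}$, since $z\bar z=1$. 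Substituting,
\[
z\,\frac{B_i'(z)}{B_i(z)}=\frac{1-|a_i|^2}{|z-a_i|^2}=P_{a_i}(z)>0 .
\]

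The key observation is therefore that every term $zB_i'(z)/B_i(z)$ is a positive real number, namely the Poisson kernel at $a_i$. There is no cancellation in the sum, and
\[
|T'(z)|=\sum_{i=1}^{d}P_{a_i}(z).
\]
This identity is the only step needing care. One has to check that the conjugation identity really makes each summand a positive real, not merely bound its modulus. Otherwise the triangle inequality would give only the upper bound, and the lower bound would fail.

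It then suffices to bound each Poisson kernel on $\mathbb{T}$ using $1-|a_i|\le|z-a_i|\le1+|a_i|$. This gives
\[
\frac{1-|a_i|^2}{(1+|a_i|)^2}=\frac{1-|a_i|}{1+|a_i|}\le P_{a_i}(z)\le\frac{1-|a_i|^2}{(1-|a_i|)^2}=\frac{1+|a_i|}{1-|a_i|}.
\]
Summing over $i$ yields both inequalities in the statement.
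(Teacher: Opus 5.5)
Your proof is correct and follows essentially the same route as the paper: you compute the logarithmic derivative, use $z\bar z=1$ on $\mathbb{T}$ to get $zT'(z)/T(z)=\sum_{i}\frac{1-|a_i|^2}{|z-a_i|^2}$, a sum of positive reals that therefore equals $|T'(z)|$, and then bound each Poisson kernel via $1-|a_i|\le|z-a_i|\le 1+|a_i|$. You also point out explicitly that the positivity of each summand, rather than the triangle inequality, is what gives the lower bound; the paper passes over this step quickly.
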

\begin{proof}
    For all $z\in\hat{\mathbb{C}}\backslash\{T^{-1}(0)\},$ $$\frac{T'(z)}{T(z)}=\sum_{i=1}^{d}\frac{1-|a_i|^2}{(z-a_i)(1-\bar{a_i}z)}.$$ Take $z\in\mathbb{T}.$ We have \begin{align*}
        \frac{zT'(z)}{T(z)}&=z\sum_{i=1}^{d}\frac{1-|a_i|^2}{(z-a_i)(1-\bar{a}_iz)}\\&=\sum_{i=1}^{d}\frac{1-|a_i|^2}{\bar{z}(z-a_i)(1-\bar{a}_iz)}\\&=\sum_{i=1}^{d}\frac{1-|a_i|^2}{(z-a_i)(\bar{z}-\bar{a}_i)}\\&=\sum_{i=1}^{d}\frac{1-|a_i|^2}{|z-a_i|^2},
    \end{align*} where the second line follows from $z=1/\bar{z}$ for $z\in\mathbb{T}.$ Clearly, for $z\in\mathbb{T},$ $|zT'(z)/T(z)|=|T'(z)|$ and thus $$|T'(z)|=\sum_{i=1}^{n}\frac{1-|a_i|^2}{|z-a_i|^2}.$$ The result follows from $1-|a_i|\le|z-a_i|\le1+|a_i|$.
\end{proof}

\section{Admissibility of $\mathcal{T}^{(N)}$}\label{N_admissible}
\begin{lemma}
    Fix $N\in\mathbb{N}.$ Suppose $\mathcal{T}$ is admissible. Then $\mathcal{T}^{(N)}:=(T_{\omega}^{(N)})_{\omega\in\Omega}$ is admissible.
\end{lemma}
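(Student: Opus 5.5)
We check the four conditions of Definition \ref{admissible} for the cocycle $\mathcal{T}^{(N)}=(T^{(N)}_\omega)_{\omega\in\Omega}$ driven by $\sigma^N$. The approach is to write each integrand for $T^{(N)}_\omega$ as, or bound it by, a sum of $N$ integrands for single maps $T_{\sigma^k\omega}$, $0\le k\le N-1$. Then $\sigma$-invariance of $\mathbb{P}$ turns each integral into at most $N$ times the corresponding integral for $\mathcal{T}$. One caveat: $\sigma^N$ need not be ergodic. If the definition of Blaschke product cocycle is read as requiring ergodicity, I will restrict to an ergodic component, of which there are finitely many (at most $N$). Otherwise the integrability statements are unaffected. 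Measurability of $\omega\mapsto T^{(N)}_\omega$ (degree, rotation and zeros) is handled by noting that the zeros of $T^{(N)}_\omega$ are the iterated preimages of the zeros under the measurable maps $T_{\sigma^k\omega}$. I would only sketch this step.

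\textbf{Condition (1).} We have $\deg T^{(N)}_\omega=\prod_{k=0}^{N-1}\deg T_{\sigma^k\omega}\ge \deg T_\omega$. Hence $\{\deg T^{(N)}_\omega\ge 2\}\supseteq\{\deg T_\omega\ge2\}$, which has positive measure.

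\textbf{Conditions (2) and (4).} By the chain rule, $\inf_{\mathbb{T}}|(T^{(N)}_\omega)'|\ge\prod_{k}\inf_{\mathbb{T}}|T'_{\sigma^k\omega}|$, since each $T_{\sigma^k\omega}$ maps $\mathbb{T}$ to $\mathbb{T}$. Taking logarithms gives
\[
\log\inf_{\mathbb{T}}|(T^{(N)}_\omega)'|\ge\sum_{k=0}^{N-1}\log\inf_{\mathbb{T}}|T'_{\sigma^k\omega}|,
\]
and integrating gives (4). For (2), the degree is multiplicative, so
\[
\log\frac{\deg T^{(N)}_\omega}{\inf_{\mathbb{T}}|(T^{(N)}_\omega)'|}\le\sum_{k=0}^{N-1}\log\frac{\deg T_{\sigma^k\omega}}{\inf_{\mathbb{T}}|T'_{\sigma^k\omega}|}.
\]
Note that by Proposition \ref{derivative_bounds}, and the formula for $|T'|$ on $\mathbb{T}$, the integrand in (2) is bounded below. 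I will also check that the integrands are integrable and not merely that their integrals are finite.

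\textbf{Condition (3).} This is the main obstacle. Write $F=T^{(N)}_\omega$ and proceed by induction using $F=G\circ H$. We have
\[
\frac{F''}{(F')^2}=\frac{G''\circ H}{(G'\circ H)^2}+\frac{H''}{(H')^2}\cdot\frac{1}{G'\circ H}.
\]
The second term is bounded in absolute value by $|H''/(H')^2|/\inf_{\mathbb{T}}|G'|$. For the first term, change variables along $H:\mathbb{T}\to\mathbb{T}$, a $\deg H$-to-one covering. This gives
\[
\int_{\mathbb{T}}|g\circ H|\,d\mathrm{Leb}=\int_{\mathbb{T}}|g|\,\mathcal{L}_H1\,d\mathrm{Leb}\le\frac{\deg H}{\inf_{\mathbb{T}}|H'|}\int_{\mathbb{T}}|g|\,d\mathrm{Leb},
\]
where $\mathcal{L}_H$ denotes the transfer operator of $H$. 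Combining the two bounds and using $\log^+(x+y)\le\log^+x+\log^+y+\log2$ and $\log^+(xy)\le\log^+x+\log^+y$, the $\log^+$ of the integral for $F$ is dominated by a sum of three types of terms: $\log^+$ of the single-map integrals for each $T_{\sigma^k\omega}$; $\log^+\frac{\deg T_{\sigma^k\omega}}{\inf|T'_{\sigma^k\omega}|}$; and $\log^-\inf|T'_{\sigma^k\omega}|$ (coming from $1/\inf|G'|$). There are $O(N)$ such terms plus a constant. Their integrability follows respectively from (3), (2) and (4) for $\mathcal{T}$, completing the proof.
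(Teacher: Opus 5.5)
Your proposal is correct and follows essentially the same route as the paper. Both arguments use multiplicativity of the degree for (1), and the chain-rule bound $\inf_{\mathbb{T}}|(T^{(N)}_\omega)'|\ge\prod_k\inf_{\mathbb{T}}|T'_{\sigma^k\omega}|$ together with $\sigma$-invariance of $\mathbb{P}$ for (2) and (4). For (3), both combine the identity for $F''/(F')^2$ with the change-of-variables bound $\int_{\mathbb{T}}|g\circ H|\,d\text{Leb}\le\frac{\deg H}{\inf_{\mathbb{T}}|H'|}\int_{\mathbb{T}}|g|\,d\text{Leb}$ and subadditivity of $\log^+$; the paper writes out the full $N$-term expansion at once instead of inducting on $G\circ H$, but the estimates are the same. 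Your remarks on the possible non-ergodicity of $\sigma^N$, on measurability, and on the lower bound for the integrand in (2) address points the paper's proof leaves implicit.
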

\begin{proof} We will show that $(1)$-$(4)$ in Definition \ref{admissible} hold.
\begin{description}
    \item[(1)] Since $\deg(T_{\omega}^{(N)})=\prod_{k=0}^{N-1}\deg(T_{\sigma^{k}\omega}),$ it follows that $\deg(T_{\omega}^{(N)})\ge\deg(T_{\omega})$ for all $N\in\mathbb{N}$ and $\omega\in\Omega.$ Thus, $$\mathbb{P}(\omega:\deg(T_{\omega}^{(N)})\ge2)\ge\mathbb{P}(\omega:\deg(T_{\omega})\ge2)>0.$$

    \item[(2)]
    Recall that $\deg(T_{\omega}^{(N)})=\prod_{k=0}^{N-1}\deg(T_{\sigma^{k}\omega}).$ Thus, by the chain rule, $$\frac{\deg(T_{\omega}^{(N)})}{\inf_{z\in\mathbb{T}}|(T_{\omega}^{(N)})'(z)|}\le\prod_{k=0}^{N-1}\frac{\deg(T_{\sigma^{k}\omega})}{\inf_{z\in\mathbb{T}}|T_{\sigma^{k}\omega}'(z)|}.$$ It follows that \begin{align*}
    \int_{\Omega}\log^{+}\frac{\deg(T_{\omega}^{(N)})}{\inf_{z\in\mathbb{T}}|(T_{\omega}^{(N)})'(z)|}\,d\mathbb{P}(\omega)&\le\int_{\Omega}\sum_{k=0}^{N-1}\log^{+}\frac{\deg(T_{\sigma^{k}\omega})}{\inf_{z\in\mathbb{T}}|T_{\sigma^{k}\omega}'(z)|}\,d\mathbb{P}(\omega)\\&=\sum_{k=0}^{N-1}\int_{\Omega}\log^{+}\frac{\deg(T_{\omega})}{\inf_{z\in\mathbb{T}}|T_{\omega}'(z)|}\,d\mathbb{P}(\omega)\\&=N\int_{\Omega}\log^{+}\frac{\deg(T_{\omega})}{\inf_{z\in\mathbb{T}}|T_{\omega}'(z)|}\,d\mathbb{P}(\omega)\\&<\infty,
    \end{align*} where the second line follows from $\sigma$ being $\mathbb{P}$-preserving.

    \item[(3)] By repeated application of chain rule and product rule, we have \begin{align*}
        \frac{(T_{\omega}^{(N)})''(z)}{((T_{\omega}^{(N)})'(z))^2}&=\sum_{k=0}^{N-1}\frac{T_{\sigma^{k}\omega}''(T_{\omega}^{(k-1)}(z))}{((T_{\sigma^{k}\omega}'(T_{\omega}^{(k-1)}(z))^2}\prod_{j=k}^{N-1}\frac{1}{T_{\sigma^{j}\omega}'(T_{\omega}^{(j-1)}(z))},
    \end{align*} so \begin{align*}
        \int_{\mathbb{T}}\left|\frac{(T_{\omega}^{(N)})''(z)}{((T_{\omega}^{(N)})'(z))^2}\right|\,d\text{Leb}(z)&\le\sum_{k=0}^{N-1}\int_{\mathbb{T}}\left|\frac{T_{\sigma^{k}\omega}''(T_{\omega}^{(k-1)}(z))}{((T_{\sigma^{k}\omega}'(T_{\omega}^{(k-1)}(z))^2}\prod_{j=k}^{N-1}\frac{1}{T_{\sigma^{j}\omega}'(T_{\omega}^{(j-1)}(z))}\right|d\text{Leb(z)}\\&\le\sum_{k=0}^{N-1}\frac{1}{\inf_{z\in\mathbb{T}}|(T_{\sigma^{k}\omega}^{(N-k)})'(z)|}\int_{\mathbb{T}}\left|\frac{T_{\sigma^{k}\omega}''(T_{\omega}^{(k-1)}(z))}{((T_{\sigma^{k}\omega}'(T_{\omega}^{(k-1)}(z))^2}\right|d\text{Leb(z)}.
    \end{align*} Let $I_{j,k}$ be the $j^{th}$ branch of $T_{\omega}^{(k-1)}$ for $j=1,...,\deg(T_{\omega}^{(k-1)}).$ Through a change of variables, we have \begin{align*} \int_{\mathbb{T}}\frac{T_{\sigma^{k}\omega}''(T_{\omega}^{(k-1)}(z))}{((T_{\sigma^{k}\omega}'(T_{\omega}^{(k-1)}(z))^2}d\text{Leb(z)}&=\sum_{j=1}^{\deg(T_{\omega}^{(k)})-1}\int_{I_{j,k}}\left|\frac{T_{\sigma^{k}\omega}''(z)}{(T_{\sigma^{k}\omega}'(z))^2}\frac{1}{(T_{\omega}^{(k)})'(z)}\right|d\text{Leb}(z)\\&\le\frac{\deg(T_{\omega}^{(k)})}{\inf_{z\in\mathbb{T}}|(T_{\omega}^{(k)})'(z)|}\int_{\mathbb{T}}\left|\frac{T_{\sigma^{k}\omega}''(z)}{(T_{\sigma^{k}\omega}'(z))^2}\right|d\text{Leb}(z),
    \end{align*} and so \begin{align*}
        &\int_{\Omega}\log^{+}\int_{\mathbb{T}}\left|\frac{(T_{\omega}^{(N)})''(z)}{((T_{\omega}^{(N)})'(z))^2}\right|\,d\text{Leb}(z)\,d\mathbb{P}(\omega)\\&\le\int_{\Omega}\log^{+}\left(\sum_{k=0}^{N-1}\frac{1}{\inf_{z\in\mathbb{T}}|(T_{\sigma^{k}\omega}^{(N-k)})'(z)|}\int_{\mathbb{T}}\left|\frac{T_{\sigma^{k}\omega}''(T_{\omega}^{(k-1)}(z))}{((T_{\sigma^{k}\omega}'(T_{\omega}^{(k-1)}(z))^2}\right|d\text{Leb(z)}\right)\,d\mathbb{P}(\omega)\\&\le \int_{\Omega}\log^{+}\left(\sum_{k=0}^{N-1}\frac{1}{\inf_{z\in\mathbb{T}}|(T_{\sigma^{k}\omega}^{(N-k)})'(z)|}\frac{\deg(T_{\omega}^{(k)})}{\inf_{z\in\mathbb{T}}|(T_{\omega}^{(k)})'(z)|}\int_{\mathbb{T}}\left|\frac{T_{\sigma^{k}\omega}''(z)}{(T_{\sigma^{k}\omega}'(z))^2}\right|d\text{Leb}(z)\right)\,d\mathbb{P}(\omega).\end{align*} 
        
        Using $\deg(T_{\omega})\ge1, \deg(T_{\omega}^{(k)})=\prod_{i=0}^{k-1}\deg(T_{\sigma^{i}\omega})$ and $\inf_{z\in\mathbb{T}}|(T_{\omega}^{(k)})'(z)|\ge\prod_{i=0}^{k-1}\inf_{z\in\mathbb{T}}|T_{\sigma^{i}\omega}'(z)|,$ we have \begin{align*}
        &\int_{\Omega}\log^{+}\left(\sum_{k=0}^{N-1}\frac{1}{\inf_{z\in\mathbb{T}}|(T_{\sigma^{k}\omega}^{(n-k)})'(z)|}\frac{\deg(T_{\omega}^{(k)})}{\inf_{z\in\mathbb{T}}|(T_{\omega}^{(k)})'(z)|}\int_{\mathbb{T}}\left|\frac{T_{\sigma^{k}\omega}''(z)}{(T_{\sigma^{k}\omega}'(z))^2}\right|d\text{Leb}(z)\right)\,d\mathbb{P}(\omega)
        \\&\le\int_{\Omega}\log^{+}\left(\prod_{k=0}^{N-1}\frac{\deg(T_{\sigma^{k}\omega})}{\inf_{z\in\mathbb{T}}|T_{\sigma^{k}\omega}'(z)|}\sum_{k=0}^{N-1}\int_{\mathbb{T}}\left|\frac{T_{\sigma^{k}\omega}''(z)}{(T_{\sigma^{k}\omega}'(z))^2}\right|d\text{Leb}(z)\right)\,d\mathbb{P}(\omega)
        \\&\le\int_{\Omega}\sum_{k=0}^{N-1}\log\frac{\deg(T_{\sigma^{k}\omega})}{\inf_{z\in\mathbb{T}}|T_{\sigma^{k}\omega}'(z)|}d\mathbb{P}(\omega)+
        \\&+\int_{\Omega}\log^{+}\left(n\max_{0\le k\le N-1}\int_{\mathbb{T}}\left|\frac{T_{\sigma^{k}\omega}''(z)}{(T_{\sigma^{k}\omega}'(z))^2}\right|d\text{Leb}(z)\right)\,d\mathbb{P}(\omega)
        \\&=\log N+\sum_{k=0}^{N-1}\int_{\Omega}\log\frac{\deg(T_{\sigma^{k}\omega})}{\inf_{z\in\mathbb{T}}|T_{\sigma^{k}\omega}'(z)|}d\mathbb{P}(\omega)
        \end{align*}\begin{align*}&\hspace{\fill}+\int_{\Omega}\log^{+}\left(\max_{0\le k\le N-1}\int_{\mathbb{T}}\left|\frac{T_{\sigma^{k}\omega}''(z)}{(T_{\sigma^{k}\omega}'(z))^2}\right|d\text{Leb}(z)\right)\,d\mathbb{P}(\omega)
        \\&\le\log N+N\int_{\Omega}\log\frac{\deg(T_{\omega})}{\inf_{z\in\mathbb{T}}|T_{\omega}'(z)|}d\mathbb{P}(\omega)+\sum_{k=0}^{N-1}\int_{\Omega}\log^{+}\left(\int_{\mathbb{T}}\left|\frac{T_{\sigma^{k}\omega}''(z)}{(T_{\sigma^{k}\omega}'(z))^2}\right|d\text{Leb}(z)\right)d\mathbb{P}(\omega)
        \\&=\log N +N\left(\int_{\Omega}\log^{+}\frac{\deg(T_{\omega})}{\inf_{z\in\mathbb{T}}|T_{\omega}'(z)|}d\mathbb{P}(\omega)+\int_{\Omega}\log^{+}\left(\int_{\mathbb{T}}\left|\frac{T_{\omega}''(z)}{(T_{\omega}'(z))^2}\right|d\text{Leb}(z)\right)d\mathbb{P}(\omega)\right)\\&<\infty,
    \end{align*} where the last two lines follow from $\sigma$ being $\mathbb{P}$-preserving and $\mathcal{T}$ admissible.

    \item[(4)] Since $$\log\inf_{z\in\mathbb{T}}|(T_{\omega}^{(N)})'(z)|\ge\log\prod_{k=0}^{N-1}\inf_{z\in\mathbb{T}}|T_{\sigma^{k}\omega}'(z)|=\sum_{k=0}^{N-1}\log\inf_{z\in\mathbb{T}}|T_{\sigma^{k}\omega}'(z)|,$$ we have \begin{align*}\int_{\Omega}\log\inf_{z\in\mathbb{T}}|(T_{\omega}^{(N)})'(z)|\,d\mathbb{P}(\omega)&\ge\sum_{k=0}^{N-1}\int_{\Omega}\log\inf_{z\in\mathbb{T}}|T_{\sigma^{k}\omega}'(z)|\,d\mathbb{P}(\omega)\\&=N\int_{\Omega}\log\inf_{z\in\mathbb{T}}|T_{\omega}'(z)|\,d\mathbb{P}(\omega)\\&>-\infty,\end{align*}\end{description} since $\sigma$ is $\mathbb{P}$-preserving.\end{proof}

\printbibliography

@article {expandPujals,
    AUTHOR = {Pujals, Enrique R. and Robert, Leonel and Shub, Michael},
     TITLE = {Expanding maps of the circle rerevisited: positive {L}yapunov
              exponents in a rich family},
   JOURNAL = {Ergodic Theory Dynam. Systems},
  FJOURNAL = {Ergodic Theory and Dynamical Systems},
    VOLUME = {26},
      YEAR = {2006},
    NUMBER = {6},
     PAGES = {1931--1937},
      ISSN = {0143-3857},
   MRCLASS = {37E10 (37A35)},
  MRNUMBER = {2279272},
MRREVIEWER = {Wenxiang Sun},
       DOI = {10.1017/S0143385706000368},
       URL = {https://doi.org/10.1017/S0143385706000368},
}

@article {AFGV2,
    AUTHOR = {Atnip, Jason and Froyland, Gary and Gonz\'{a}lez-Tokman, Cecilia
              and Vaienti, Sandro},
     TITLE = {Thermodynamic formalism for random weighted covering systems},
   JOURNAL = {Comm. Math. Phys.},
  FJOURNAL = {Communications in Mathematical Physics},
    VOLUME = {386},
      YEAR = {2021},
    NUMBER = {2},
     PAGES = {819--902},
      ISSN = {0010-3616},
   MRCLASS = {37D35 (37E05 37H12)},
  MRNUMBER = {4294282},
MRREVIEWER = {Eugen Mihailescu},
       DOI = {10.1007/s00220-021-04156-1},
       URL = {https://doi.org/10.1007/s00220-021-04156-1},
}

@article {buzzi_SRB,
    AUTHOR = {Buzzi, J\'{e}r\^{o}me},
     TITLE = {Absolutely continuous {S}.{R}.{B}. measures for random
              {L}asota-{Y}orke maps},
   JOURNAL = {Trans. Amer. Math. Soc.},
  FJOURNAL = {Transactions of the American Mathematical Society},
    VOLUME = {352},
      YEAR = {2000},
    NUMBER = {7},
     PAGES = {3289--3303},
      ISSN = {0002-9947},
   MRCLASS = {37D20 (37A25 37E05 37H99)},
  MRNUMBER = {1707698},
MRREVIEWER = {Bernard Schmitt},
       DOI = {10.1090/S0002-9947-00-02607-6},
       URL = {https://doi.org/10.1090/S0002-9947-00-02607-6},
}

@article {buzzi_edoc,
    AUTHOR = {Buzzi, J\'{e}r\^{o}me},
     TITLE = {Exponential decay of correlations for random {L}asota-{Y}orke
              maps},
   JOURNAL = {Comm. Math. Phys.},
  FJOURNAL = {Communications in Mathematical Physics},
    VOLUME = {208},
      YEAR = {1999},
    NUMBER = {1},
     PAGES = {25--54},
      ISSN = {0010-3616},
   MRCLASS = {37H15 (37H05)},
  MRNUMBER = {1729876},
MRREVIEWER = {Bernard Schmitt},
       DOI = {10.1007/s002200050746},
       URL = {https://doi.org/10.1007/s002200050746},
}

@article {iid_Mobius,
    AUTHOR = {Karmakar, Satyajit and Key, Eric S.},
     TITLE = {Compositions of random {M}\"{o}bius transformations},
   JOURNAL = {Stochastic Anal. Appl.},
  FJOURNAL = {Stochastic Analysis and Applications},
    VOLUME = {22},
      YEAR = {2004},
    NUMBER = {3},
     PAGES = {525--557},
      ISSN = {0736-2994,1532-9356},
   MRCLASS = {60G99 (60F05)},
  MRNUMBER = {2047267},
MRREVIEWER = {V.\ Thangaraj},
       DOI = {10.1081/SAP-120030445},
       URL = {https://doi.org/10.1081/SAP-120030445},
}

@article {Mobius_plane,
    AUTHOR = {Ambroladze, Amiran and Wallin, Hans},
     TITLE = {Random iteration of {M}\"{o}bius transformations and
              {F}urstenberg's theorem},
   JOURNAL = {Ergodic Theory Dynam. Systems},
  FJOURNAL = {Ergodic Theory and Dynamical Systems},
    VOLUME = {20},
      YEAR = {2000},
    NUMBER = {4},
     PAGES = {953--962},
      ISSN = {0143-3857,1469-4417},
   MRCLASS = {37H99 (60B15 60J10)},
  MRNUMBER = {1779388},
MRREVIEWER = {G\"{o}ran\ H\"{o}gn\"{a}s},
       DOI = {10.1017/S0143385700000535},
       URL = {https://doi.org/10.1017/S0143385700000535},
}

@article {Mobius_composition,
    AUTHOR = {Jacques, Matthew and Short, Ian},
     TITLE = {Repeated compositions of {M}\"{o}bius transformations},
   JOURNAL = {Ergodic Theory Dynam. Systems},
  FJOURNAL = {Ergodic Theory and Dynamical Systems},
    VOLUME = {40},
      YEAR = {2020},
    NUMBER = {2},
     PAGES = {437--452},
      ISSN = {0143-3857,1469-4417},
   MRCLASS = {37F05 (28A80)},
  MRNUMBER = {4048300},
MRREVIEWER = {Tarakanta\ Nayak},
       DOI = {10.1017/etds.2018.43},
       URL = {https://doi.org/10.1017/etds.2018.43},
}

@article {tischler,
    AUTHOR = {Tischler, David},
     TITLE = {Blaschke products and expanding maps of the circle},
   JOURNAL = {Proc. Amer. Math. Soc.},
  FJOURNAL = {Proceedings of the American Mathematical Society},
    VOLUME = {128},
      YEAR = {2000},
    NUMBER = {2},
     PAGES = {621--622},
      ISSN = {0002-9939},
   MRCLASS = {30D50},
  MRNUMBER = {1641125},
MRREVIEWER = {Walter Bergweiler},
       DOI = {10.1090/S0002-9939-99-05175-8},
       URL = {https://doi.org/10.1090/S0002-9939-99-05175-8},
}

@misc{preprint,
      title={Average measure theoretic entropy for a family of expanding on average random Blaschke products}, 
      author={Cecilia González-Tokman and Renee Oldfield},
      year={2025},
      eprint={2505.09948},
      archivePrefix={arXiv},
      primaryClass={math.DS},
      howpublished = {\url{https://arxiv.org/abs/2505.09948}},
      note={To appear \textit{Ergodic Theory and Dynamical Systems}.}
}

@article {subadditive_MET,
    AUTHOR = {Gou\"{e}zel, S\'{e}bastien and Karlsson, Anders},
     TITLE = {Subadditive and multiplicative ergodic theorems},
   JOURNAL = {J. Eur. Math. Soc. (JEMS)},
  FJOURNAL = {Journal of the European Mathematical Society (JEMS)},
    VOLUME = {22},
      YEAR = {2020},
    NUMBER = {6},
     PAGES = {1893--1915},
      ISSN = {1435-9855},
   MRCLASS = {37H15 (32F45 47A35)},
  MRNUMBER = {4092901},
MRREVIEWER = {Ricardo F. Vila Freyer},
       DOI = {10.4171/jems/958},
zURL = {https://doi.org/10.4171/jems/958},
}

@incollection {parabolic,
    AUTHOR = {Contreras, Manuel D. and D\'{\i}az-Madrigal, Santiago and
              Pommerenke, Christian},
     TITLE = {Iteration in the unit disk: the parabolic zoo},
 BOOKTITLE = {Complex and harmonic analysis},
     PAGES = {63--91},
 PUBLISHER = {DEStech Publ., Inc., Lancaster, PA},
      YEAR = {2007},
      ISBN = {978-1-932078-73-2},
   MRCLASS = {30D05 (30-02)},
  MRNUMBER = {2387282},
}

@article {fletcher,
    AUTHOR = {Fletcher, Alastair},
     TITLE = {Unicritical {B}laschke products and domains of ellipticity},
   JOURNAL = {Qual. Theory Dyn. Syst.},
  FJOURNAL = {Qualitative Theory of Dynamical Systems},
    VOLUME = {14},
      YEAR = {2015},
    NUMBER = {1},
     PAGES = {25--38},
      ISSN = {1575-5460,1662-3592},
   MRCLASS = {30D05 (30J10 37F10)},
  MRNUMBER = {3326210},
MRREVIEWER = {Wayne\ Stewart\ Smith},
       DOI = {10.1007/s12346-015-0133-4},
       URL = {https://doi.org/10.1007/s12346-015-0133-4},
}

@article {martin,
    AUTHOR = {Martin, N. F. G.},
     TITLE = {On finite {B}laschke products whose restrictions to the unit
              circle are exact endomorphisms},
   JOURNAL = {Bull. London Math. Soc.},
  FJOURNAL = {The Bulletin of the London Mathematical Society},
    VOLUME = {15},
      YEAR = {1983},
    NUMBER = {4},
     PAGES = {343--348},
      ISSN = {0024-6093,1469-2120},
   MRCLASS = {30D50 (28D20)},
  MRNUMBER = {703758},
MRREVIEWER = {Kenneth\ Stephenson},
       DOI = {10.1112/blms/15.4.343},
       URL = {https://doi.org/10.1112/blms/15.4.343},
}

@article {baker_domains,
    AUTHOR = {Bergweiler, Walter},
     TITLE = {Singularities in {B}aker domains},
   JOURNAL = {Comput. Methods Funct. Theory},
  FJOURNAL = {Computational Methods and Function Theory},
    VOLUME = {1},
      YEAR = {2001},
    NUMBER = {1, [On table of contents: 2002]},
     PAGES = {41--49},
      ISSN = {1617-9447},
   MRCLASS = {30D05 (30F45 37F10 39B12)},
  MRNUMBER = {1931601},
       DOI = {10.1007/BF03320975},
       URL = {https://doi.org/10.1007/BF03320975},
}

@article {baker_domains2,
    AUTHOR = {Bara\'{n}ski, Krzysztof and Fagella, N\'{u}ria and Jarque,
              Xavier and Karpi\'{n}ska, Bogus\l awa},
     TITLE = {Escaping points in the boundaries of {B}aker domains},
   JOURNAL = {J. Anal. Math.},
  FJOURNAL = {Journal d'Analyse Math\'{e}matique},
    VOLUME = {137},
      YEAR = {2019},
    NUMBER = {2},
     PAGES = {679--706},
      ISSN = {0021-7670,1565-8538},
   MRCLASS = {37F10 (30D05)},
  MRNUMBER = {3938017},
MRREVIEWER = {Sanjay\ Kumar},
       DOI = {10.1007/s11854-019-0011-0},
       URL = {https://doi.org/10.1007/s11854-019-0011-0},
}

@article {benini_1,
    AUTHOR = {Benini, Anna Miriam and Evdoridou, Vasiliki and Fagella,
              N\'{u}ria and Rippon, Philip J. and Stallard, Gwyneth M.},
     TITLE = {Boundary dynamics for holomorphic sequences, non-autonomous
              dynamical systems and wandering domains},
   JOURNAL = {Adv. Math.},
  FJOURNAL = {Advances in Mathematics},
    VOLUME = {446},
      YEAR = {2024},
     PAGES = {Paper No. 109673, 51},
      ISSN = {0001-8708,1090-2082},
   MRCLASS = {30D05 (37F46)},
  MRNUMBER = {4736589},
MRREVIEWER = {Marco\ Abate},
       DOI = {10.1016/j.aim.2024.109673},
       URL = {https://doi.org/10.1016/j.aim.2024.109673},
}

@article {ergodic_inner_function,
    AUTHOR = {Aaronson, Jon},
     TITLE = {Ergodic theory for inner functions of the upper half plane},
   JOURNAL = {Ann. Inst. H. Poincar\'{e} Sect. B (N.S.)},
  FJOURNAL = {Annales de l'Institut Henri Poincar\'{e}. Section B. Calcul
              des Probabilit\'{e}s et Statistique. Nouvelle S\'{e}rie},
    VOLUME = {14},
      YEAR = {1978},
    NUMBER = {3},
     PAGES = {233--253},
      ISSN = {0020-2347},
   MRCLASS = {28D05 (30D50)},
  MRNUMBER = {508928},
MRREVIEWER = {Michael\ Lin},
}

@article {benini2,
    AUTHOR = {Benini, Anna Miriam and Evdoridou, Vasiliki and Fagella,
              N\'{u}ria and Rippon, Philip J. and Stallard, Gwyneth M.},
     TITLE = {Shrinking targets and recurrent behaviour for forward
              compositions of inner functions},
   JOURNAL = {J. Math. Pures Appl. (9)},
  FJOURNAL = {Journal de Math\'{e}matiques Pures et Appliqu\'{e}es.
              Neuvi\`eme S\'{e}rie},
    VOLUME = {208},
      YEAR = {2026},
     PAGES = {Paper No. 103864, 25},
      ISSN = {0021-7824,1776-3371},
   MRCLASS = {30D05 (30J05 37F10)},
  MRNUMBER = {5032916},
       DOI = {10.1016/j.matpur.2026.103864},
       URL = {https://doi.org/10.1016/j.matpur.2026.103864},
}

@incollection {lorentzen,
    AUTHOR = {Lorentzen, Lisa},
     TITLE = {Compositions of contractions},
      NOTE = {Extrapolation and rational approximation (Luminy, 1989)},
   JOURNAL = {J. Comput. Appl. Math.},
  FJOURNAL = {Journal of Computational and Applied Mathematics},
    VOLUME = {32},
      YEAR = {1990},
    NUMBER = {1-2},
     PAGES = {169--178},
      ISSN = {0377-0427,1879-1778},
   MRCLASS = {30D05 (30B70)},
  MRNUMBER = {1091787},
MRREVIEWER = {Heinrich\ Begehr},
       DOI = {10.1016/0377-0427(90)90428-3},
       URL = {https://doi.org/10.1016/0377-0427(90)90428-3},
}

@article {random_circle_endo,
    AUTHOR = {Goverse, V. P. H. and Homburg, A. J. and Lamb, J. S. W.},
     TITLE = {Intermittent {T}wo-{P}oint {D}ynamics at the {T}ransition to
              {C}haos for {R}andom {C}ircle {E}ndomorphisms},
   JOURNAL = {Comm. Math. Phys.},
  FJOURNAL = {Communications in Mathematical Physics},
    VOLUME = {407},
      YEAR = {2026},
    NUMBER = {5},
     PAGES = {Paper No. 87},
      ISSN = {0010-3616,1432-0916},
   MRCLASS = {Prelim},
  MRNUMBER = {5055737},
       DOI = {10.1007/s00220-026-05565-w},
       URL = {https://doi.org/10.1007/s00220-026-05565-w},
}

@article {synchronization_intermittency,
    AUTHOR = {Abbasi, Neda and Gharaei, Masoumeh and Homburg, Ale Jan},
     TITLE = {Iterated function systems of logistic maps: synchronization
              and intermittency},
   JOURNAL = {Nonlinearity},
  FJOURNAL = {Nonlinearity},
    VOLUME = {31},
      YEAR = {2018},
    NUMBER = {8},
     PAGES = {3880--3913},
      ISSN = {0951-7715,1361-6544},
   MRCLASS = {37E05 (28D05 37H10)},
  MRNUMBER = {3826118},
MRREVIEWER = {Byungik\ Kahng},
       DOI = {10.1088/1361-6544/aac637},
       URL = {https://doi.org/10.1088/1361-6544/aac637},
}

@article {simple_random_map,
    AUTHOR = {Yan, Jin and Majumdar, Moitrish and Ruffo, Stefano and Sato,
              Yuzuru and Beck, Christian and Klages, Rainer},
     TITLE = {Transition to anomalous dynamics in a simple random map},
   JOURNAL = {Chaos},
  FJOURNAL = {Chaos. An Interdisciplinary Journal of Nonlinear Science},
    VOLUME = {34},
      YEAR = {2024},
    NUMBER = {2},
     PAGES = {Paper No. 023128, 17},
      ISSN = {1054-1500,1089-7682},
   MRCLASS = {37E05 (37H10)},
  MRNUMBER = {4706371},
       DOI = {10.1063/5.0176310},
       URL = {https://doi.org/10.1063/5.0176310},
}

@article{Mane1991,
    AUTHOR = {Mañé, Ricardo and Doering, Claus I.},
    JOURNAL = {Ensaios Matemáticos},
    PAGES = {5-79},
    TITLE = {The Dynamics of Inner Functions},
    URL = {http://eudml.org/doc/186559},
    VOLUME = {3},
    YEAR = {1991},
}

@article {gelfert_synchronization,
    AUTHOR = {Gelfert, Katrin and Salcedo, Graccyela},
     TITLE = {Synchronization rates and limit laws for random dynamical
              systems},
   JOURNAL = {Math. Z.},
  FJOURNAL = {Mathematische Zeitschrift},
    VOLUME = {308},
      YEAR = {2024},
    NUMBER = {1},
     PAGES = {Paper No. 10, 35},
      ISSN = {0025-5874,1432-1823},
   MRCLASS = {37E10 (37B05 37H12 60F05)},
  MRNUMBER = {4780038},
MRREVIEWER = {Zhe\ Wang},
       DOI = {10.1007/s00209-024-03571-z},
       URL = {https://doi.org/10.1007/s00209-024-03571-z},
}

@article {hochman_furstenberg,
    AUTHOR = {Hochman, Michael and Solomyak, Boris},
     TITLE = {On the dimension of {F}urstenberg measure for {$SL_2(\Bbb R)$}
              random matrix products},
   JOURNAL = {Invent. Math.},
  FJOURNAL = {Inventiones Mathematicae},
    VOLUME = {210},
      YEAR = {2017},
    NUMBER = {3},
     PAGES = {815--875},
      ISSN = {0020-9910,1432-1297},
   MRCLASS = {37C85 (37C45 37D25)},
  MRNUMBER = {3735630},
MRREVIEWER = {Bal\'{a}zs\ B\'{a}r\'{a}ny},
       DOI = {10.1007/s00222-017-0740-6},
       URL = {https://doi.org/10.1007/s00222-017-0740-6},
}

@article {beardon,
    AUTHOR = {Beardon, Alan F.},
     TITLE = {Repeated compositions of analytic maps},
   JOURNAL = {Comput. Methods Funct. Theory},
  FJOURNAL = {Computational Methods and Function Theory},
    VOLUME = {1},
      YEAR = {2001},
    NUMBER = {1, [On table of contents: 2002]},
     PAGES = {235--248},
      ISSN = {1617-9447},
   MRCLASS = {30D05 (30C80 30D45 30F45)},
  MRNUMBER = {1931613},
MRREVIEWER = {Rich\ L.\ Stankewitz},
       DOI = {10.1007/BF03320987},
       URL = {https://doi.org/10.1007/BF03320987},
}

@article {keen_lakic,
    AUTHOR = {Keen, Linda and Lakic, Nikola},
     TITLE = {Random holomorphic iterations and degenerate subdomains of the
              unit disk},
   JOURNAL = {Proc. Amer. Math. Soc.},
  FJOURNAL = {Proceedings of the American Mathematical Society},
    VOLUME = {134},
      YEAR = {2006},
    NUMBER = {2},
     PAGES = {371--378},
      ISSN = {0002-9939,1088-6826},
   MRCLASS = {37F10 (30C35 30C70 30C75)},
  MRNUMBER = {2176004},
MRREVIEWER = {Volker\ Mayer},
       DOI = {10.1090/S0002-9939-05-08280-8},
       URL = {https://doi.org/10.1090/S0002-9939-05-08280-8},
}

@incollection {lorentzen2,
    AUTHOR = {Lorentzen, Lisa},
     TITLE = {Convergence of compositions of self-mappings},
      NOTE = {XII-th Conference on Analytic Functions (Lublin, 1998)},
   JOURNAL = {Ann. Univ. Mariae Curie-Sk\l odowska Sect. A},
  FJOURNAL = {Annales Universitatis Mariae Curie-Sk\l odowska. Sectio A.
              Mathematica},
    VOLUME = {53},
      YEAR = {1999},
     PAGES = {121--145},
      ISSN = {0365-1029},
   MRCLASS = {37F50 (30B70 30D05 40A15)},
  MRNUMBER = {1775541},
MRREVIEWER = {Stefano\ Marmi},
}

@article {eckman_ruelle,
    AUTHOR = {Eckmann, J.-P. and Ruelle, D.},
     TITLE = {Ergodic theory of chaos and strange attractors},
   JOURNAL = {Rev. Modern Phys.},
  FJOURNAL = {Reviews of Modern Physics},
    VOLUME = {57},
      YEAR = {1985},
    NUMBER = {3, part 1},
     PAGES = {617--656},
      ISSN = {0034-6861,1539-0756},
   MRCLASS = {58F11 (58-02 58F13)},
  MRNUMBER = {800052},
MRREVIEWER = {C.\ Tresser},
       DOI = {10.1103/RevModPhys.57.617},
       URL = {https://doi.org/10.1103/RevModPhys.57.617},
}

@article {blumenthal_young,
    AUTHOR = {Blumenthal, Alex and Young, Lai-Sang},
     TITLE = {Equivalence of physical and {SRB} measures in random dynamical
              systems},
   JOURNAL = {Nonlinearity},
  FJOURNAL = {Nonlinearity},
    VOLUME = {32},
      YEAR = {2019},
    NUMBER = {4},
     PAGES = {1494--1524},
      ISSN = {0951-7715,1361-6544},
   MRCLASS = {37H10 (37C40 37D45)},
  MRNUMBER = {3925383},
MRREVIEWER = {Romain\ Aimino},
       DOI = {10.1088/1361-6544/aafaa8},
       URL = {https://doi.org/10.1088/1361-6544/aafaa8},
}

@incollection {hyperbolic_metric,
    AUTHOR = {Beardon, A. F. and Minda, D.},
     TITLE = {The hyperbolic metric and geometric function theory},
 BOOKTITLE = {Quasiconformal mappings and their applications},
     PAGES = {9--56},
 PUBLISHER = {Narosa, New Delhi},
      YEAR = {2007},
      ISBN = {978-81-7319-807-6; 81-7319-807-1},
   MRCLASS = {30F45 (47H09)},
  MRNUMBER = {2492498},
}

@article {feigenbaum,
    AUTHOR = {Feigenbaum, Mitchell J.},
     TITLE = {Universal behavior in nonlinear systems},
   JOURNAL = {Los Alamos Sci.},
  FJOURNAL = {Los Alamos Science},
    NUMBER = {1},
    VOLUME = {1},
      YEAR = {1980},
     PAGES = {4--27},
   MRCLASS = {58F14 (70K50 76F99 82A05)},
  MRNUMBER = {608366},
MRREVIEWER = {Frederick\ R.\ Marotto},
}

@article {intermittency,
    AUTHOR = {Pomeau, Yves and Manneville, Paul},
     TITLE = {Intermittent transition to turbulence in dissipative dynamical
              systems},
   JOURNAL = {Comm. Math. Phys.},
  FJOURNAL = {Communications in Mathematical Physics},
    VOLUME = {74},
      YEAR = {1980},
    NUMBER = {2},
     PAGES = {189--197},
      ISSN = {0010-3616,1432-0916},
   MRCLASS = {58F13 (65L20)},
  MRNUMBER = {576270},
       URL = {http://projecteuclid.org/euclid.cmp/1103907981},
}

@article {furstenberg,
    AUTHOR = {Furstenberg, Harry},
     TITLE = {Noncommuting random products},
   JOURNAL = {Trans. Amer. Math. Soc.},
  FJOURNAL = {Transactions of the American Mathematical Society},
    VOLUME = {108},
      YEAR = {1963},
     PAGES = {377--428},
      ISSN = {0002-9947,1088-6850},
   MRCLASS = {60.08 (60.66)},
  MRNUMBER = {163345},
MRREVIEWER = {G.-C.\ Rota},
       DOI = {10.2307/1993589},
       URL = {https://doi.org/10.2307/1993589},
}

@article {malicet,
    AUTHOR = {Malicet, Dominique},
     TITLE = {Random walks on {${\rm Homeo}(S^1)$}},
   JOURNAL = {Comm. Math. Phys.},
  FJOURNAL = {Communications in Mathematical Physics},
    VOLUME = {356},
      YEAR = {2017},
    NUMBER = {3},
     PAGES = {1083--1116},
      ISSN = {0010-3616,1432-0916},
   MRCLASS = {60B15 (60B10 60G50)},
  MRNUMBER = {3719548},
MRREVIEWER = {Michael\ Voit},
       DOI = {10.1007/s00220-017-2996-5},
       URL = {https://doi.org/10.1007/s00220-017-2996-5},
}

@article {finititude_physical_measure,
    AUTHOR = {Barrientos, Pablo G. and Nakamura, Fumihiko and Nakano, Yushi
              and Toyokawa, Hisayoshi},
     TITLE = {Finitude of physical measures for random maps},
   JOURNAL = {Ast\'{e}risque},
  FJOURNAL = {Ast\'{e}risque},
    NUMBER = {459},
      YEAR = {2025},
     PAGES = {103},
      ISSN = {0303-1179,2492-5926},
      ISBN = {978-2-37905-214-9},
   MRCLASS = {37C40 (37A30 37A50 37C30 37H05 60J05)},
  MRNUMBER = {4938477},
MRREVIEWER = {Zhiming\ Li},
       DOI = {10.24033/ast.1248},
       URL = {https://doi.org/10.24033/ast.1248},
}

@article {kifer_random_diffeo,
    AUTHOR = {Brin, M. and Kifer, Yu.},
     TITLE = {Dynamics of {M}arkov chains and stable manifolds for random
              diffeomorphisms},
   JOURNAL = {Ergodic Theory Dynam. Systems},
  FJOURNAL = {Ergodic Theory and Dynamical Systems},
    VOLUME = {7},
      YEAR = {1987},
    NUMBER = {3},
     PAGES = {351--374},
      ISSN = {0143-3857,1469-4417},
   MRCLASS = {58G32 (58F18 60H25 60J27)},
  MRNUMBER = {912374},
MRREVIEWER = {Yu.\ E.\ Gliklikh},
       DOI = {10.1017/S0143385700004107},
       URL = {https://doi.org/10.1017/S0143385700004107},
}

@article {araujo_time_average,
    AUTHOR = {Ara\'{u}jo, V\'{\i}tor},
     TITLE = {Attractors and time averages for random maps},
   JOURNAL = {Ann. Inst. H. Poincar\'{e} C Anal. Non Lin\'{e}aire},
  FJOURNAL = {Annales de l'Institut Henri Poincar\'{e} C. Analyse Non
              Lin\'{e}aire},
    VOLUME = {17},
      YEAR = {2000},
    NUMBER = {3},
     PAGES = {307--369},
      ISSN = {0294-1449,1873-1430},
   MRCLASS = {37H20 (37D99 37G35)},
  MRNUMBER = {1771137},
MRREVIEWER = {Lorenzo\ J.\ D\'{\i}az},
       DOI = {10.1016/S0294-1449(00)00112-8},
       URL = {https://doi.org/10.1016/S0294-1449(00)00112-8},
}

@article {homburg_random_diffeo,
    AUTHOR = {Zmarrou, Hicham and Homburg, Ale Jan},
     TITLE = {Dynamics and bifurcations of random circle diffeomorphisms},
   JOURNAL = {Discrete Contin. Dyn. Syst. Ser. B},
  FJOURNAL = {Discrete and Continuous Dynamical Systems. Series B. A Journal
              Bridging Mathematics and Sciences},
    VOLUME = {10},
      YEAR = {2008},
    NUMBER = {2-3},
     PAGES = {719--731},
      ISSN = {1531-3492,1553-524X},
   MRCLASS = {37E10 (37E45 37G35 37H20)},
  MRNUMBER = {2425065},
MRREVIEWER = {Pawe\l \ G\'{o}ra},
       DOI = {10.3934/dcdsb.2008.10.719},
       URL = {https://doi.org/10.3934/dcdsb.2008.10.719},
}

@article {homburg_synchronization,
    AUTHOR = {Homburg, Ale Jan},
     TITLE = {Synchronization in minimal iterated function systems on
              compact manifolds},
   JOURNAL = {Bull. Braz. Math. Soc. (N.S.)},
  FJOURNAL = {Bulletin of the Brazilian Mathematical Society. New Series.
              Boletim da Sociedade Brasileira de Matem\'{a}tica},
    VOLUME = {49},
      YEAR = {2018},
    NUMBER = {3},
     PAGES = {615--635},
      ISSN = {1678-7544,1678-7714},
   MRCLASS = {37C05 (37D30)},
  MRNUMBER = {3862799},
MRREVIEWER = {Abbas\ Fakhari},
       DOI = {10.1007/s00574-018-0073-0},
       URL = {https://doi.org/10.1007/s00574-018-0073-0},
}

@article {homburg_kalle_intermittency,
    AUTHOR = {Homburg, Ale Jan and Kalle, Charlene},
     TITLE = {Iterated function systems of affine expanding and contracting
              maps on the unit interval},
   JOURNAL = {Adv. Math.},
  FJOURNAL = {Advances in Mathematics},
    VOLUME = {482},
      YEAR = {2025},
    NUMBER = {part B},
     PAGES = {Paper No. 110605, 48},
      ISSN = {0001-8708,1090-2082},
   MRCLASS = {37H20 (28A80 37A25 37E05 37H15)},
  MRNUMBER = {4975394},
MRREVIEWER = {Eugen\ Mihailescu},
       DOI = {10.1016/j.aim.2025.110605},
       URL = {https://doi.org/10.1016/j.aim.2025.110605},
}

@article {ruelle_takens,
    AUTHOR = {Ruelle, David and Takens, Floris},
     TITLE = {On the nature of turbulence},
   JOURNAL = {Comm. Math. Phys.},
  FJOURNAL = {Communications in Mathematical Physics},
    VOLUME = {20},
      YEAR = {1971},
     PAGES = {167--192},
      ISSN = {0010-3616,1432-0916},
   MRCLASS = {76.34 (34.00)},
  MRNUMBER = {284067},
MRREVIEWER = {J.\ Moser},
       URL = {http://projecteuclid.org/euclid.cmp/1103857186},
}

@article {CGTAQ,
    AUTHOR = {Gonz\'{a}lez-Tokman, Cecilia and Quas, Anthony},
     TITLE = {Stability and collapse of the {L}yapunov spectrum for
              {P}erron-{F}robenius operator cocycles},
   JOURNAL = {J. Eur. Math. Soc. (JEMS)},
  FJOURNAL = {Journal of the European Mathematical Society (JEMS)},
    VOLUME = {23},
      YEAR = {2021},
    NUMBER = {10},
     PAGES = {3419--3457},
      ISSN = {1435-9855,1435-9863},
   MRCLASS = {37D25 (30J10 37D35 37E10 37H15)},
  MRNUMBER = {4275477},
MRREVIEWER = {Eugen\ Mihailescu},
       DOI = {10.4171/jems/1096},
       URL = {https://doi.org/10.4171/jems/1096},
}

\end{document}